\newtheorem{theorem}{Theorem}
\newtheorem{lemma}[theorem]{Lemma}
\newtheorem{corollary}[theorem]{Corollary}
\newtheorem{property}[theorem]{Property}
\newcommand{\argmin}{\operatornamewithlimits{argmin}}
\newcommand{\ind}{\mathbbm{1}}
\def\ZZ{\mathbb{Z}}
\def\QQ{\mathbb{Q}}
\def\RR{\mathbb{R}}
\def\NN{\mathbb{N}}
\def\EE{\mathbb{E}}
\def\PP{\mathbb{P}}
\def\B{{\cal B}}
\def\h{\tilde{h}}
\def\LL{{\cal L}}
\def\F{{\cal F}}
\def\R{{\cal R}}
\def\st{{\rm s.t.}}
\DeclareMathOperator*{\sgn}{sgn}
\DeclareMathOperator*{\dist}{dist}
\newcommand{\z}{\textbf{0}}
\DeclarePairedDelimiter{\norm}{\lVert}{\rVert}
\newtheorem{prop}[theorem]{Property}
\newtheorem{ex}[theorem]{Example}
\newenvironment{customproperty}[1]{\innercustomproperty}{\endinnercustomproperty}
\newenvironment{customcorollary}[1]{\innercustomcorollary}{\endinnercustomcorollary}
\newenvironment{customlemma}[1]{\innercustomlemma}{\endinnercustomlemma}
\begin{document}

\onecolumn
\title{Perturbed Iterate SGD for\\Lipschitz Continuous Loss Functions}

\author[1]{Michael R. Metel\thanks{michael.metel@huawei.com}}
\author[2]{Akiko Takeda\thanks{takeda@mist.i.u-tokyo.ac.jp}}

\affil[1]{Huawei Noah's Ark Lab, Montr\'eal, Qu\'ebec, Canada}
\affil[2]{Department of Creative Informatics, Graduate School of Information Science and Technology,
	The University of Tokyo, Tokyo, Japan; RIKEN Center for Advanced Intelligence Project, Tokyo, 
	Japan}

\maketitle

\begin{abstract}
This paper presents an extension of stochastic gradient descent for the minimization of Lipschitz continuous loss functions. Our motivation is for use in non-smooth non-convex stochastic optimization problems, which are frequently encountered 
in applications such as machine learning. Using the Clarke 
$\epsilon$-subdifferential, we prove the non-asymptotic convergence to an approximate stationary point in expectation for the proposed method. 
From this result, a method with non-asymptotic convergence with high probability, as well as a method with asymptotic convergence to a Clarke stationary point almost surely are developed. Our results hold under the assumption that the stochastic 
loss function is a Carath\'eodory function which is almost everywhere Lipschitz continuous in the decision variables. To the best of our knowledge this is the first non-asymptotic convergence analysis under these minimal assumptions.\\

\noindent\textbf{Keywords:} stochastic optimization; Lipschitz continuity; first-order method; 
non-asymptotic convergence
\end{abstract}

\section{Introduction}
\label{int}
The focus of this work is on unconstrained minimization problems of the form  
\begin{alignat}{6}\label{eq:1}
&\min\limits_{w\in\RR^d}&&\text{ }f(w):=\EE_{\xi}[F(w,\xi)],
\end{alignat}
where $\xi\in\RR^p$ is a random vector from a probability space $(\Omega,\F,P)$. We assume that 
$F(w,\xi)$ is a Carath\'eodory function (see Section \ref{pre}) which is Lipschitz continuous in $w$ 
for $\xi$ almost 
everywhere, implying that $f(w)$ is a Lipschitz continuous function. Given observed samples $\widehat{\xi}_j$ of $\xi$ for 
$j=1,2,...,n$, our work is also 
applicable when considering the expectation with respect to the samples' associated empirical 
probability distribution, 
\begin{alignat}{6}\label{eq:0}
&f(w):=\frac{1}{n}\sum_{j=1}^n F(w,\widehat{\xi}_j).
\end{alignat}

We do not assume 
the loss function to be differentiable nor convex. This class of functions is 
quite general and enables our work to be applicable for a wide range of loss functions used in 
practice. The lack of a smoothness assumption allows for functions used in deep learning models, such 
as the ReLU activation function, and functions used in sparse learning models, such as $L^1$-norm 
regularization\footnote{Any deterministic Lipschitz continuous function can be added to 
$f(w)$ without changing our analysis.}. Further removing the requirement of convexity enables us 
to consider bounded loss functions, which are known to be more robust to outliers \cite{yang2010}, 
including all bounded functions with Lipschitz continuous gradients, see Property \ref{LCGprop} in the 
appendix.\\

We propose a new first-order stochastic method with 
non-asymptotic convergence bounds for finding an approximate stationary point of problem 
\eqref{eq:1} in expectation in terms of the Clarke $\epsilon$-subdifferential \cite{goldstein1977}. 
In this paper $\partial f(w)$ will denote the Clarke subdifferential (see equation \eqref{clsub} in 
Section \ref{pre}). A 
necessary condition for a point $\bar{w}$ to be a minimizer of $f(w)$ is for it to be a stationary point, i.e. $0\in\partial f(\bar{w})$. It has been recently proven though that for $\epsilon\in 
[0,1)$, there is no finite time algorithm which can guarantee an $\epsilon$-stationary point, 
$\dist(0,\partial f(w))\leq \epsilon$, for a 
directionally differentiable, bounded from below Lipschitz continuous function $f(w)$ using a family of 1-dimensional functions, see \cite[Theorem 5]{zhang2020}. 
Another difficulty is that without further assumptions placed on $f(w)$, $\partial f(w)\subset \EE_{\xi}[\partial F(w,\xi)]$ 
\cite[Theorem 2.7.2]{clarke1990}, 
so an unbiased estimate of a subgradient of $f(w)$ cannot be guaranteed by sampling stochastic 
subgradients of $\partial F(w,\xi)$. Motivated by the Gradient Sampling algorithm \cite{burke2018}, 
these issues can be resolved by using a step direction computed by sampling the gradient of the stochastic function $F(w,\xi)$ at random points near each iterate. Our work 
is also motivated by papers such as 
\cite{lakshmanan2008,yousefian2012,nesterov2017}, where the approximate function, 
$f_{\sigma}(w)=\EE_{z}[f(w+\sigma z)]$, is shown to have a Lipschitz 
continuous gradient when $z$ follows a Normal distribution or a uniform 
distribution in a Euclidean ball. It has been shown in \cite[Section 4.2]{kornowski2021} that this is an essentially optimal method of smoothing a non-smooth function.\\ 

The next section contains a discussion about related works, where we highlight previous papers proving 
asymptotic convergence for algorithms minimizing Lipschitz continuous functions, as well as 
two papers which have proven non-asymptotic convergence results, with comparisons to this 
paper. Section \ref{pre} contains our assumptions and notation, the 
convergence criterion, as well as some necessary properties concerning  Carath\'eodory 
functions which are Lipschitz continuous almost everywhere. Section \ref{ssd} presents the algorithm 
Perturbed Iterate SGD (PISGD), with the main convergence result contained in Theorem \ref{SGDF}, and accompanying computational complexity results for PISGD given in Corollaries \ref{ssdcomp}, \ref{ssdcomp2}, and \ref{ssdcomp3}. In particular, Corollary \ref{ssdcomp3} gives the computational complexity to find an approximate stationary point with high probability. A method to asymptotically converge to a Clarke stationary point almost surely is given in Corollary \ref{asymptotic}, and in Subsection \ref{settings} there is a discussion of applying PISGD for particular cases of $f(w)$.
In Section \ref{num}, the developed convergence theory is applied to train a Lipschitz continuous feedforward neural network. The conclusion is given in Section \ref{con}, and the appendix contains proofs and some auxiliary results.

\section{Related works} 
\label{rl}

\subsection{Asymptotic Convergence Analysis}

The Gradient Sampling algorithm \cite{burke2018} is proven to have asymptotic convergence to a 
stationary point for locally Lipschitz continuous functions which are continuously differentiable on an open set with full measure. In each iteration, the Gradient 
Sampling algorithm computes the minimum-norm vector $g$ in 
the convex hull of gradients sampled in a Euclidean ball near the current iterate, and uses a 
line-search method to determine the step size in the direction of $-g$, while also needing to ensure 
that the function is differentiable at the next iterate. The Gradient Sampling algorithm 
was inspired by the earlier analysis \cite{burke2002} of approximating the Clarke subdifferential by 
the convex hull of gradients sampled in a Euclidean ball.\\ 

In \cite{davis2018}, the stochastic subgradient 
algorithm is studied, where it was proven that almost surely, every limit point is a Clarke 
stationary point for all locally Lipschitz continuous functions whose graphs 
are Whitney stratifiable, assuming iterates are bounded almost surely. This class of functions 
includes a wide range of applications, including standard architectures in deep learning.\\ 

Whereas the previous work's results require vanishing step sizes, in \cite{bianchi2020} the convergence of constant step SGD is studied, where it is shown that SGD converges in probability to a Clarke stationary point when the constant step size approaches zero. Their problem setup is also similar to our own as they consider a Carath\'eodory function which is locally Lipschitz continuous in $w$ for each $\xi$.\\    

Another 
interesting paper which studies non-convex, non-differentiable minimization is 
\cite{bertsekas1975}, which considers objective functions whose 
non-differentiability stems from 
{\it simple kinks} of the form $\max\{0,f_i(w)\}$ for a family of functions $\{f_i(w)\}$. A smoothing 
method is developed, inspired by penalty and multiplier methods for 
constrained optimization problems, and convergence to an optimal solution is established by minimizing 
increasingly accurate approximations of the original problem with a boundedness assumption on the 
partial derivatives of the objective.

\subsection{Non-Asymptotic Convergence Analysis}

In \cite{nesterov2017}, non-asymptotic 
convergence bounds for a zero-order stochastic algorithm are achieved for minimizing a Gaussian 
smoothed approximation, $f_{\sigma}(w)=\EE_{z}[f(w+\sigma z)]$, of a Lipschitz continuous function 
$f(w)$, where $z$ follows a Normal distribution and $\sigma\geq 0$ is a smoothing parameter. In each 
iteration, the algorithm takes a step in the direction of an unbiased estimate of $\nabla f_{\sigma}(w)$. A computational complexity in terms of the number of iterations (or gradient estimates) of $O(\frac{1}{\delta \epsilon^2})$ is established for finding a 
solution satisfying $\EE(||\nabla f_\sigma(\bar{w})||^2_2)\leq 
\epsilon$, where $|f_{\sigma}(w)-f(w)|\leq \delta$ for all $w\in\RR^d$.  
Gaussian smoothing is a class of mollifiers with unbounded support, see for example \cite{ermoliev1995}, where it is established that these types of averaged functions, when taking $\sigma\rightarrow 0$, converge to $f$ when $f$ is continuous, and can preserve infima even in cases where $f$ is discontinuous.\\  

The recent publication \cite{zhang2020} seems to have been inspired by much of the same past research as our paper, resulting in similarities, most notably in the development of the same convergence criteria using the Clarke $\epsilon$-subdifferential. One of the biggest differences compared to our work is their assumption that the objective is directionally differentiable, resulting in a different analysis and algorithm design. In the stochastic setting, it is further assumed that an unbiased stochastic subgradient $\hat{g}$ can be sampled, such that $\EE[\hat{g}]=g\in \partial f(w)$ and $\langle g,d\rangle=f'(w,d)$, where $f'(w,d)$ denotes the directional derivative. Two algorithms are presented in their work. In the deterministic setting where $f(w)$ and a subgradient can be computed, an algorithm is presented with a convergence result in probability. In the stochastic setting, a convergence result in expectation is given. Using Markov's inequality and rerunning the algorithm a logarithmic number of times, one of the solutions will be an approximate stationary point with high probability. A similar computational complexity to 
ours has been proven for their algorithm. It is also notable that unlike our convergence results and those of \cite{nesterov2017}, their results are independent of the dimension size of the problem. A more detailed comparison of our convergence results is given in Subsection \ref{comp}. Lipschitz continuous functions are generally not directionally differentiable, but with this further assumption directional stationarity could be considered, which gives a sharper definition of a stationary point than Clarke or even a Mordukhovich stationary point. For algorithms converging to directional stationary points for non-smooth non-convex optimization problems, see for example \cite{pang2018}.

\section{Preliminaries}
\label{pre}
\subsection{Assumptions and Notation}
We assume that the random vector $\xi: \Omega\rightarrow \RR^p$ is a $(\F,\B_{\RR^p})$-measurable 
function where $\B_{\RR^p}$ denotes the Borel $\sigma$-algebra on $\RR^p$, and that $F(w,\xi)$ is a Carath\'eodory function \cite[Definition 4.50]{charalambos2013}, which in our setting means that for each $w\in \RR^d$, $F(w,\cdot)$ is $(\B_{\RR^p},\B_{\RR})$-measurable and for each $\xi\in \RR^p$, $F(\cdot, \xi)$ is continuous. It follows that $F(w,\xi)$ is a 
$(\B_{\RR^{d+p}},\B_{\RR})$-measurable function \cite[Lemma 4.51]{charalambos2013}. The function 
$F(w,\xi)$ is also assumed to be $C(\xi)$-Lipschitz continuous in $w$ for almost every $\xi$, implying that 
$f(w)$ is $L_0:=\EE[C(\xi)]$-Lipschitz continuous. In particular, the values of $\xi$ for which $F(w,\xi)$ is not Lipschitz continuous are contained in a Borel null set, whose complement of full measure is denoted as $\Xi$. We further assume that $Q:=\EE[C(\xi)^2]<\infty$. 
For a random variable $X: \Omega\rightarrow \RR^q$ for arbitrary $q\in \NN$, let $P_{X}$ denote the image measure induced by the random variable $X$, i.e. for a Borel set $A\in \B_{\RR^{q}}$, $P_{X}(A)=P(\{\omega\in\Omega: X(\omega)\in A\})$. Given that $\EE[C(\xi)^2]<\infty$, it holds that $C(\xi)\in L^1(P_{\xi})$
\cite[Proposition 6.12]{folland1999}.\\ 

Our analysis relies on randomly perturbed iterates, $w=x+z$, where 
$x\in\RR^d$ represents the current iterate and $z: \Omega\rightarrow \RR^d$ is a random vector uniformly 
distributed in the 
$d$-dimensional Euclidean ball of radius $\sigma>0$, $B(\sigma):=\{z:\norm{z}_2\leq \sigma\}$, denoted as $z\sim U(B(\sigma))$. The probability density function of $z$ is 
$$p(z)=\begin{cases}
\frac{\Gamma(\frac{d}{2}+1)}{(\sqrt{\pi}\sigma)^d} & \text{ if } z\in B(\sigma)\\ 
0 & \text{ otherwise,}  
\end{cases}\nonumber$$
where for $d\in \NN$, $\Gamma(\frac{d}{2}+1)=(\frac{d}{2})!$ when $d$ is even,  $\Gamma(\frac{d}{2}+1)=2^{-\left(\frac{d+1}{2}\right)}\sqrt{\pi}d!!$ when $d$ is odd. The double factorial for $d>0$ equals $d!!=d(d-2)(d-4)...(2 \text{ or } 1)$ for $d$ even or odd, and $0!!=1$. The expected distance from the origin of $z$ is 
\begin{alignat}{6}\label{eq:21}
\EE[\norm{z}_2]=\frac{\sigma d}{d+1}.
\end{alignat}

\subsection{Convergence Criterion}

The fact that $f(w)$ is Lipschitz continuous implies that it is differentiable everywhere outside of a 
set of Lebesgue measure zero due to Rademacher's theorem \cite[Theorem 3.1.6]{federer199}. This motivates the use of perturbed 
iterates as $f(w)$ is then differentiable with probability 1 whenever its gradient is evaluated at 
$w=x+z$. The proposed convergence criteria in this paper use the Clarke  $\epsilon$-subdifferential. 
We first 
define the Clarke subdifferential, which for locally Lipschitz continuous functions on $\RR^d$ equals
\begin{alignat}{6}  
\partial f(w):=\text{co}\{\lim\limits_{i\rightarrow \infty} \nabla f(w_i): w_i\rightarrow 
w,w_i\notin E\cup E_f\}, \label{clsub}
\end{alignat}
where $\text{co}\{\cdot\}$ denotes the convex hull, $E$ is any set of Lebesgue measure $0$, and $E_f$ is the set of points at which $f$ is not differentiable \cite[Theorem 2.5.1]{clarke1990}. The standard first-order convergence criterion for smooth non-convex functions is  
$\epsilon$-stationarity,
\begin{alignat}{6}  
\dist(0,\partial f(w))\leq \epsilon,\label{epsta} 
\end{alignat}
but as highlighted in the introduction, proving a convergence rate to such a point for Lipschitz 
continuous functions is not possible \cite[Theorem 5]{zhang2020}. This leads us to consider the 
Clarke $\epsilon$-subdifferential, 
\begin{alignat}{6}  
\partial_{\epsilon} f(w):=\text{co}\{\partial f(\hat{w}): \hat{w}\in w+B(\epsilon)\},\label{clesub} 
\end{alignat}
which is always a nonempty convex compact set with $\partial_{0} f(w)=\partial f(w)$ 
\cite{goldstein1977}.\\ 

The gradient is always contained in the Clarke subdifferential wherever a Lipschitz continuous 
function is differentiable \cite[Proposition 2.2.2]{clarke1990}, which motivates the use of the Clarke 
$\epsilon$-subdifferential, as for $z\sim U(B(\sigma))$ almost surely, $\nabla f(w)\in 
\partial_{\sigma}f(x)$ for $w=x+z$. Our focus is then on what we call 
$(\epsilon_1,\epsilon_2)$-stationarity,
\begin{alignat}{6}
\dist(0,\partial_{\epsilon_1}f(w))\leq \epsilon_2,\label{epss}
\end{alignat}	
with the goal of designing a stochastic algorithm which can output a random solution $\bar{w}$ 
which is an $(\epsilon_1,\epsilon_2)$-stationary point in 
expectation,     
\begin{alignat}{6}
\EE[\dist(0,\partial_{\epsilon_1}f(\bar{w}))]&\leq \epsilon_2.\label{eps}
\end{alignat}	
Relaxing the $\epsilon$-stationarity condition to finding a point which is a distance $\epsilon_1$ away from an $\epsilon_2$-stationary point, i.e. a point $\bar{w}$ such that 
\begin{alignat}{6}
||\bar{w}-\hat{w}||_2\leq \epsilon_1\quad\text{ and }\quad\dist(0,\partial f(\hat{w}))&\leq \epsilon_2,\label{nsp}
\end{alignat}	 
is an increasingly common convergence criterion for non-smooth non-convex objective functions, see for example \cite{davis2019,xu2019}. It has been shown though in \cite[Proposition 1]{kornowski2021} that \eqref{epss} does not imply \eqref{nsp}, but we can see that \eqref{epss} is a necessary condition for \eqref{nsp} to hold. 
In addition, \eqref{epss} has recently been used as a convergence criteria in \cite{zhang2020}, where the equivalence between  
$(\epsilon_1,\epsilon_2)$-stationarity and $\epsilon$-stationarity for functions with Lipschitz 
continuous gradients is shown in \cite[Proposition 6]{zhang2020}. For further discussion and examples of $(\epsilon_1,\epsilon_2)$-stationary points, see \cite{kornowski2021}.

\subsection{Differentiability and Measurability Properties}

The following property states that the gradient of $F(w,\xi)$ exists almost everywhere. Let $m^d$ denote the Lebesgue measure restricted to $(\RR^d,\B_{\RR^d})$. The product measure $m^d\times P_{\xi}$ is unique given that $m^d$ and $P_{\xi}$ are $\sigma$-finite. 
\begin{prop}{\cite[Lemma 1]{bianchi2020}}
	\label{AAder}
	The stochastic function $F(w,\xi)$ is differentiable in $w$ almost everywhere on the product measure space 
	${(\RR^{d+p},\B_{\RR^{d+p}},m^d\times P_{\xi})}$.	
\end{prop}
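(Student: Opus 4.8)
The plan is to show that the ``bad set'' $\mathcal{N}:=\{(w,\xi)\in\RR^{d+p}: F(\cdot,\xi)\text{ is not differentiable at }w\}$ is contained in a Borel set of $(m^d\times P_\xi)$-measure zero, and the argument has two ingredients: a sectionwise application of Rademacher's theorem, and a measurability check that licenses Tonelli's theorem. For each fixed $\xi\in\Xi$ the map $w\mapsto F(w,\xi)$ is $C(\xi)$-Lipschitz on $\RR^d$, so by Rademacher's theorem its $\xi$-section $\mathcal{N}_\xi:=\{w:(w,\xi)\in\mathcal{N}\}$ satisfies $m^d(\mathcal{N}_\xi)=0$. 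Once $\mathcal{N}\in\B_{\RR^{d+p}}$ is established, Tonelli's theorem applied to the nonnegative function $\ind_{\mathcal{N}}$ (both $m^d$ and $P_\xi$ being $\sigma$-finite) gives $(m^d\times P_\xi)(\mathcal{N})=\int_{\RR^p}m^d(\mathcal{N}_\xi)\,dP_\xi(\xi)=\int_{\Xi}0\,dP_\xi+\int_{\Xi^c}(\cdots)\,dP_\xi=0$, where the last term vanishes because $\Xi^c$ is contained in a Borel $P_\xi$-null set.

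The main obstacle, and the step requiring genuine care, is showing that $\mathcal{N}$ — equivalently its complement $\mathcal{D}$, the set of differentiability points — is Borel. I would use that $F$ is jointly $\B_{\RR^{d+p}}$-measurable (Lemma~4.51 of \cite{charalambos2013}, already invoked) and continuous in $w$, and rewrite Fr\'echet differentiability of $F(\cdot,\xi)$ at $w$ as a countable logical combination of closed conditions: $F(\cdot,\xi)$ is differentiable at $w$ if and only if for every $n\in\NN$ there exist $a\in\QQ^d$ and $k\in\NN$ such that $|F(w+h,\xi)-F(w,\xi)-\langle a,h\rangle|\le\tfrac1n\norm{h}_2$ for all $h\in\QQ^d$ with $0<\norm{h}_2<\tfrac1k$. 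Restricting the candidate gradient to $\QQ^d$ is justified by a Cauchy argument (probing along the coordinate directions forces the rational candidates to converge), and restricting the increment $h$ to $\QQ^d$ is justified by continuity of $F(\cdot,\xi)$, since both sides of the inequality are continuous in $h$ and the inequality is a closed condition. Hence
\begin{equation*}
\mathcal{D}=\bigcap_{n\in\NN}\ \bigcup_{a\in\QQ^d}\ \bigcup_{k\in\NN}\ \bigcap_{\substack{h\in\QQ^d\\ 0<\norm{h}_2<1/k}}\Big\{(w,\xi):|F(w+h,\xi)-F(w,\xi)-\langle a,h\rangle|\le\tfrac1n\norm{h}_2\Big\}.
\end{equation*}
Each generating set is the preimage of a closed subset of $\RR$ under the $\B_{\RR^{d+p}}$-measurable map $(w,\xi)\mapsto F(w+h,\xi)-F(w,\xi)-\langle a,h\rangle$ — measurable because $(w,\xi)\mapsto(w+h,\xi)$ is continuous and $F$ is Borel — and all indicated unions and intersections are countable, so $\mathcal{D}\in\B_{\RR^{d+p}}$ and therefore $\mathcal{N}\in\B_{\RR^{d+p}}$.

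With measurability established, the conclusion is exactly the Tonelli computation of the first paragraph. I would also note that no completion of the product $\sigma$-algebra is needed, since the explicit countable characterization keeps $\mathcal{N}$ inside $\B_{\RR^{d+p}}$; if one preferred a softer route, it would suffice to observe that $\mathcal{N}$ is contained in a Borel set of the above form having zero product measure, which is all that ``almost everywhere on $(\RR^{d+p},\B_{\RR^{d+p}},m^d\times P_\xi)$'' requires. The only remaining routine points are verifying the equivalence of the rational-parameter characterization with Fr\'echet differentiability and checking that $P_\xi(\Xi^c)=0$, the latter being immediate from the standing assumption that the non-Lipschitz values of $\xi$ lie in a Borel null set.
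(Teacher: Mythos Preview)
Your proposal is correct and follows essentially the same architecture as the paper's proof: exhibit the differentiability set as a Borel set via a countable characterization with rational increments and rational candidate gradients, then apply Tonelli together with Rademacher sectionwise. The paper's countable description is virtually identical to yours (it uses strict inequalities and $\omega,\delta\in\QQ_{>0}$ in place of your $1/n,1/k$, and restricts a priori to the Lipschitz set $\overline{\Xi}$, but these are cosmetic).

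The one substantive difference worth noting is how the equivalence of the rational-gradient characterization with genuine differentiability is argued. You propose a Cauchy argument: for any two admissible candidates $a_n,a_m$, the triangle inequality on a common small increment gives $|\langle a_n-a_m,h\rangle|\le(\tfrac1n+\tfrac1m)\|h\|_2$, forcing convergence. The paper instead bounds the candidate sequence using the Lipschitz constant $C(\xi)$ and extracts an accumulation point by Bolzano--Weierstrass. Your route is slightly more elementary in that it does not invoke the Lipschitz bound at this step, whereas the paper's route makes explicit why working inside $\overline{\Xi}$ is natural. Both are valid and short; neither changes the overall structure.
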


In order to handle the non-differentiability of $f(w)$, we define an {\it approximate gradient} of 
$f(w)$, $\widetilde{\nabla} f(w)$, to be a Borel measurable function on $\RR^d$ which equals the 
gradient of $f(w)$ almost everywhere it is differentiable. We also define the approximate stochastic 
gradient of $F(w,\xi)$, $\widetilde{\nabla} F: \RR^{d+p}\rightarrow \RR^d$, as a Borel measurable function which is equal to the gradient of $F(w,\xi)$ almost everywhere it exists. For completeness, an example of how to generate a family of approximate stochastic gradient functions $\widetilde{\nabla} F(w,\xi)$ is given in the appendix, 
see Example \ref{appgrad}. As the function $f(w)$ is continuous, it is Borel measurable 
and the approximate gradient $\widetilde{\nabla} f(w)$ can be constructed in the same manner as presented in 
the example.\\ 

The following property shows that unbiased estimates of 
the approximate gradient of $f(w)$ can be obtained by sampling the approximate stochastic gradient of 
$F(w,\xi)$ for almost all $w\in\RR^d$. 
\begin{prop}
	\label{expg}
	For almost every $w\in\RR^d$   	
	\begin{alignat}{6}
	&\widetilde{\nabla} f(w)=\EE_{\xi}[\widetilde{\nabla} F(w,\xi)],\nonumber
	\end{alignat}	
	where $\widetilde{\nabla} f(w)$ and $\widetilde{\nabla} F(w,\xi)$ are approximate gradients of 
	$f(w)$ and 
	$F(w,\xi)$, 
	respectively. 
\end{prop}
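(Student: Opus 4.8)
The plan is to show that $g(w):=\EE_{\xi}[\widetilde{\nabla} F(w,\xi)]$ is well defined for almost every $w$ and that it agrees almost everywhere with the classical gradient of $f$; since $f$ is $L_0$-Lipschitz continuous, Rademacher's theorem \cite[Theorem 3.1.6]{federer199} guarantees that $\nabla f$ exists almost everywhere and equals $\widetilde{\nabla} f$ there, which will complete the argument. This is essentially a differentiation-under-the-integral-sign statement, but it must be carried out carefully because $F(\cdot,\xi)$ is only Lipschitz, not differentiable.

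First I would verify well-definedness. Wherever $\nabla_w F(w,\xi)$ exists, $C(\xi)$-Lipschitz continuity of $F(\cdot,\xi)$ forces $\norm{\nabla_w F(w,\xi)}_2\leq C(\xi)$, and by Proposition \ref{AAder} this holds for $(m^d\times P_{\xi})$-almost every $(w,\xi)$, so $\norm{\widetilde{\nabla} F(w,\xi)}_2\leq C(\xi)$ almost everywhere. Restricting to a ball $B_R\subset\RR^d$ and using $C(\xi)\in L^1(P_{\xi})$ gives $\int_{B_R}\int\norm{\widetilde{\nabla}F(w,\xi)}_2\, dP_{\xi}\, dw\leq m^d(B_R)L_0<\infty$, so by Tonelli's theorem $\widetilde{\nabla} F(w,\cdot)\in L^1(P_{\xi})$ for almost every $w\in B_R$; letting $R\to\infty$, $g(w)$ is defined and finite for almost every $w\in\RR^d$, and it is Borel measurable where defined by Fubini's theorem.

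Next I would reduce to a one-dimensional computation along coordinate lines. Fixing a coordinate $i$ and writing $w=(t,w^{(i)})$ with $t$ the $i$-th coordinate, for every $\xi\in\Xi$ and every $w^{(i)}$ the map $t\mapsto F((t,w^{(i)}),\xi)$ is $C(\xi)$-Lipschitz, hence absolutely continuous, so for $a<b$ one has $F((b,w^{(i)}),\xi)-F((a,w^{(i)}),\xi)=\int_a^b \partial_i F((t,w^{(i)}),\xi)\, dt$ with $|\partial_i F|\leq C(\xi)$ almost everywhere. Using Proposition \ref{AAder} together with Fubini's theorem, for almost every $(w^{(i)},\xi)$ the full gradient $\nabla_w F((t,w^{(i)}),\xi)$ exists for almost every $t$, and where it exists its $i$-th component equals $\partial_i F=\widetilde{\nabla}_i F$; hence $\partial_i F$ may be replaced by the Borel surrogate $\widetilde{\nabla}_i F$ in the integral off a null set. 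Taking $\EE_{\xi}$ of the identity — legitimate because $C(\xi)$ is an integrable dominating function — and applying Fubini's theorem yields $f((b,w^{(i)}))-f((a,w^{(i)}))=\int_a^b g_i((t,w^{(i)}))\, dt$ for almost every $w^{(i)}$. Therefore $t\mapsto f((t,w^{(i)}))$ is absolutely continuous with a.e. derivative $g_i$, so by the Lebesgue differentiation theorem and one more application of Fubini's theorem, $\partial_i f(w)=g_i(w)$ for almost every $w$; combining this with Rademacher's theorem gives $\widetilde{\nabla}_i f(w)=g_i(w)$ almost everywhere, and ranging over $i=1,\dots,d$ gives the claim.

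The main obstacle is the measure-theoretic bookkeeping in the reduction step: translating the product-space statement ``$\nabla_w F$ exists $(m^d\times P_{\xi})$-almost everywhere'' into the slice-wise statement needed along coordinate lines, and justifying that the partial derivative appearing in the one-dimensional fundamental theorem of calculus coincides, off a product-null set, with the Borel-measurable approximate gradient $\widetilde{\nabla}_i F$. Everything else reduces to routine Tonelli/Fubini arguments backed by the Lipschitz domination $\norm{\widetilde{\nabla}F(w,\xi)}_2\leq C(\xi)\in L^1(P_{\xi})$.
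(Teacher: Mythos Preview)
Your proposal is correct but takes a genuinely different route from the paper. The paper argues directly via difference quotients and the dominated convergence theorem: it first uses Tonelli on the null set $D^c$ from Property~\ref{AAder} to isolate a full-measure set $D_w\subset\RR^d$ on which, for each fixed $w$, $F(\cdot,\xi)$ is differentiable at $w$ for $P_\xi$-a.e.\ $\xi$; on $D_w$ the coordinate difference quotients $h^i_j(w,\xi)=i\big(F(w+i^{-1}e_j,\xi)-F(w,\xi)\big)$ converge to $\widetilde{\nabla}F_j(w,\xi)$ $\xi$-a.e.\ and are dominated by $C(\xi)\in L^1(P_\xi)$, so DCT lets the limit pass through $\EE_\xi$, turning $\EE_\xi[\widetilde{\nabla}F_j(w,\xi)]$ into the $j$-th partial derivative of $f$ at $w$.

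Your approach instead encodes differentiation as the fundamental theorem of calculus along coordinate lines and then swaps the $t$-integral with $\EE_\xi$ by Fubini, recovering the partial derivative of $f$ via Lebesgue differentiation. Both are standard ways of differentiating a Lipschitz parametric integral; the paper's DCT argument is shorter and avoids the slicing bookkeeping you flag as the ``main obstacle,'' while your FTC/Fubini argument makes the absolute-continuity structure explicit and never needs to isolate a pointwise set like $D_w$ on which the difference-quotient limit exists $\xi$-almost surely. Either route is entirely adequate here.
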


In the following property the measurability of $\dist(0,\partial_{\epsilon}f(w))$ is verified.
\begin{prop}
	\label{SetValMeas}
	For any $\epsilon\geq 0$, $\dist(0,\partial_{\epsilon}f(w))$ is a Borel measurable function in $w\in\RR^d$.
\end{prop}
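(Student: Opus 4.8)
The plan is to show that $w\mapsto\dist(0,\partial_{\epsilon}f(w))$ is lower semicontinuous on $\RR^d$; since the superlevel sets $\{w:\dist(0,\partial_{\epsilon}f(w))>a\}$ are then open, Borel measurability follows at once. The route to lower semicontinuity is through the upper semicontinuity, as a set-valued map, of $w\mapsto\partial_{\epsilon}f(w)$.

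First I would recall the standard facts about the Clarke subdifferential of the $L_0$-Lipschitz function $f$: for every $w$, $\partial f(w)$ is a nonempty convex compact subset of the closed ball $\bar{B}(0,L_0)$, and the set-valued map $w\mapsto\partial f(w)$ has a closed graph \cite[Prop.\ 2.1.2, 2.1.5]{clarke1990}. Writing $B(\epsilon)$ for the closed Euclidean ball as in \eqref{clesub}, set $S(w):=\bigcup_{y\in w+B(\epsilon)}\partial f(y)$, so that $\partial_{\epsilon}f(w)=\mathrm{co}\,S(w)$ (the convex hull of a union of the convex sets $\partial f(y)$). A short argument shows $S$ has a closed graph: if $w_n\to w$, $g_n\in S(w_n)$ and $g_n\to g$, choose $y_n\in w_n+B(\epsilon)$ with $g_n\in\partial f(y_n)$; the sequence $\{y_n\}$ is bounded, so along a subsequence $y_n\to y\in w+B(\epsilon)$, and the closed graph of $\partial f$ gives $g\in\partial f(y)\subseteq S(w)$. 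In particular $S(w)$ is closed, and being contained in $\bar{B}(0,L_0)$ it is compact, so by Carath\'eodory's theorem $\mathrm{co}\,S(w)$ is compact; hence $\mathrm{co}\,S(w)=\overline{\mathrm{co}}\,S(w)=\partial_{\epsilon}f(w)$, consistent with the definition \eqref{clesub}.

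Next I would lift the closed-graph property to the full map $w\mapsto\partial_{\epsilon}f(w)=\mathrm{co}\,S(w)$. By Carath\'eodory again, any $g_n\in\mathrm{co}\,S(w_n)$ is a convex combination $\sum_{i=1}^{d+1}\lambda_i^n s_i^n$ with $\lambda^n$ in the simplex and $s_i^n\in S(w_n)$; along a subsequence $\lambda_i^n\to\lambda_i$ and, by local boundedness, $s_i^n\to s_i$, whence the closed graph of $S$ gives $s_i\in S(w)$, so any limit $g=\lim g_n$ lies in $\mathrm{co}\,S(w)$. Thus $w\mapsto\partial_{\epsilon}f(w)$ has a closed graph and is uniformly bounded by $L_0$, hence upper semicontinuous (alternatively one may invoke the known upper semicontinuity of the Goldstein $\epsilon$-subdifferential, cf.\ \cite{goldstein1977,burke2018}). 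To finish, fix $w_0$, write $c=\dist(0,\partial_{\epsilon}f(w_0))$, and for $\delta>0$ apply upper semicontinuity with the open set $\{x:\dist(x,\partial_{\epsilon}f(w_0))<\delta\}$ to obtain a neighborhood of $w_0$ on which every element of $\partial_{\epsilon}f(w)$ lies within $\delta$ of $\partial_{\epsilon}f(w_0)$ and therefore has norm at least $c-\delta$; taking the infimum over $\partial_{\epsilon}f(w)$ gives $\dist(0,\partial_{\epsilon}f(w))\ge c-\delta$ near $w_0$, so $\liminf_{w\to w_0}\dist(0,\partial_{\epsilon}f(w))\ge c$, i.e.\ lower semicontinuity. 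The case $\epsilon=0$, where $\partial_0 f(w)=\partial f(w)$, is covered by the same reasoning.

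The step I expect to be the most delicate is carrying the closed-graph property through the enlargement and the convex hull while simultaneously keeping the uniform boundedness/compactness, since those two properties together are exactly what licenses the passage from a closed graph to upper semicontinuity; the Carath\'eodory compactness argument and the $\epsilon$--$\delta$ computation for lower semicontinuity are then routine. If a fully self-contained treatment is preferred to citing upper semicontinuity of the Goldstein subdifferential, the closed-graph-plus-boundedness argument sketched above supplies it directly.
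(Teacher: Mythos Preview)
Your proposal is correct and follows essentially the same route as the paper: establish outer/upper semicontinuity of $w\mapsto\partial_{\epsilon}f(w)$, deduce lower semicontinuity of $w\mapsto\dist(0,\partial_{\epsilon}f(w))$, and conclude Borel measurability. The paper's proof simply cites \cite[Proposition~2.7]{goldstein1977} for the outer semicontinuity and \cite[Proposition~5.11(a)]{rockafellar2009} for the passage to lower semicontinuity of the distance, whereas you supply self-contained arguments for both (and even note the Goldstein citation as a shortcut).
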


Property \ref{AAder} was proven in \cite{bianchi2020}, but for completeness we give a proof in the appendix, along with the proofs of 
Properties \ref{expg} and \ref{SetValMeas}.

\section{Perturbed Iterate SGD}
\label{ssd}

\subsection{Algorithm Overview}

We now present PISGD. In each iteration $k$, $S$ perturbed values, $w^k_l$, of 
the current iterate $x^k$ are generated, and $S$ samples $\xi^k_l$ are taken for $l=1,...,S$. The 
stochastic function's approximate gradient is evaluated at each pair $(w^k_l,\xi^k_l)$ 
to generate the step direction, where all sampling is done independently. Our analysis 
assumes that the perturbation level $\sigma$ and step size $\eta$ are constant. As it is generally 
difficult to analyze the convergence of the last iterate of a stochastic algorithm, we use the 
standard technique \cite{ghadimi2013} of analyzing the average performance of the algorithm, which is 
equivalent to examining the convergence of a randomly chosen iterate $R$ out of a predetermined total 
of $K$. 
\begin{algorithm}[H]
	\caption{Perturbed Iterate SGD (PISGD)}
	\label{alg:sgd}
	\begin{algorithmic}
		\STATE {\bfseries Input:} $x^{1}\in \RR^d$; $K,S\in\ZZ_{>0}$; $\eta,\sigma >0$	
		\STATE $R\sim \text{uniform}\{1,2,...,K\}$
		\FOR{$k=1,2,...,R-1$} 
		\STATE Sample $z^k_l\sim U(B(\sigma))$ for $l=1,...,S$ 
		\STATE $w^k_l=x^k+z^k_l$ for $l=1,...,S$
		\STATE Sample $\xi^k_l\sim P_{\xi}$ for $l=1,...,S$ 
		\STATE $x^{k+1}=x^k-\frac{\eta}{S}\sum_{l=1}^S\widetilde{\nabla} F(w_l^k,\xi_l^k)$ 	
		\ENDFOR
		\STATE {\bfseries Output:} $x^R$
	\end{algorithmic}
\end{algorithm}

In the following subsections we give a number of results concerning PISGD which are now summarized:

\begin{itemize}
\item Subsection \ref{nonasympt} gives our main non-asymptotic convergence result to an expected $(\epsilon_1,\epsilon_2)$-stationary point in Theorem \ref{SGDF}.
\item Subsection \ref{comp} provides the computational complexity in terms of the number of stochastic approximate gradient computations for a range of algorithm settings in Corollary \ref{ssdcomp}, as well as for an optimized setting in Corollary \ref{ssdcomp2} for an expected $(\epsilon_1,\epsilon_2)$-stationary point. Corollary \ref{ssdcomp3} provides the computational complexity for an $(\epsilon_1,\epsilon_2)$-stationary point with probability $(1-\gamma)$ for any $\gamma\in(0,1)$.
\item Subsection \ref{asympt} contains a description of a general algorithm using PISGD which has asymptotic convergence guarantees to a Clarke stationary point proven in Corollary \ref{asymptotic}.
\item In Subsection \ref{settings}, the problem setting of $f(w)$ being a deterministic Lipschitz continuous function without any stochastic structure is considered, as well as the case where $f(w)$ takes the form of a finite-sum problem as in \eqref{eq:0}. 
\end{itemize}  

\subsection{Non-asymptotic Convergence to an Expected $(\epsilon_1,\epsilon_2)$-Stationary Point}
\label{nonasympt}
The following theorem provides guarantees for the values of $\epsilon_1$ and $\epsilon_2$ such that 
PISGD converges to an $(\epsilon_1,\epsilon_2)$-stationary point in expectation. Making $K$ large 
enough, $\epsilon_1$ and $\epsilon_2$ can be made arbitrarily small. There is also a parameter 
$\beta\in(0,1)$ which adjusts the rate of convergence in terms of $\epsilon_1$ and $\epsilon_2$. For example, taking $\beta=\frac{1}{3}$ the guarantees for $\epsilon_1$ and $\epsilon_2$ both improve at the same rate of $O(K^{-\beta})$.

\begin{theorem}	
	\label{SGDF}	
	Let $K\in \ZZ_{>0}$, $S=\lceil K^{1-\beta}\rceil$ for $\beta\in (0,1)$, 
	$\sigma=\theta\sqrt{d}K^{-\beta}$ and 
	$\eta=\frac{\theta}{L_0}K^{-\beta}$ for $\theta>0$, $Q=\EE[C(\xi)^2]$, and 
	$\Delta=f(x^1)-f(x^*)$, 
	where $f(x^*)$ is the global minimum of $f(x)$. After running 
	PISGD using an approximate stochastic gradient $\widetilde{\nabla} F(w,\xi)$,
	\begin{alignat}{6}
	\EE[\dist(0,\partial_{\sigma}f(x^R))]&&<&	
	K^{\frac{\beta-1}{2}}\sqrt{2\left(\frac{L_0}{\theta}\Delta+L_0^2\sqrt{d} 
	K^{-\beta}+Q\right)}.\label{bound}
	\end{alignat}	
\end{theorem}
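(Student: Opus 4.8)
The plan is to reduce PISGD to ordinary SGD run on the smoothed surrogate
$f_\sigma(x):=\EE_{z\sim U(B(\sigma))}[f(x+z)]$, whose gradient is globally Lipschitz, and then transfer the resulting bound on $\norm{\nabla f_\sigma(x^R)}_2$ back to $\dist(0,\partial_\sigma f(x^R))$. First I would assemble the three facts about $f_\sigma$ that drive everything, citing \cite{lakshmanan2008,yousefian2012,nesterov2017}: (i) since $f$ is $L_0$-Lipschitz, $|f_\sigma(x)-f(x)|\le L_0\EE[\norm{z}_2]=L_0\tfrac{\sigma d}{d+1}<L_0\sigma$ by \eqref{eq:21}; (ii) $\nabla f_\sigma$ exists everywhere and is $L_\sigma$-Lipschitz with $L_\sigma$ of order $\sqrt{d}L_0/\sigma$, and the parameter calibration $\eta=\tfrac{\theta}{L_0}K^{-\beta}$, $\sigma=\theta\sqrt d K^{-\beta}$ is exactly the one making $\eta L_\sigma\le 1$; and (iii) differentiating under the integral sign (legitimate because $\norm{\widetilde\nabla f}_2\le L_0$ a.e.\ and $z$ has a bounded density) gives $\nabla f_\sigma(x)=\EE_z[\widetilde\nabla f(x+z)]$. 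From (iii) and the definition \eqref{clesub}, $\nabla f_\sigma(x)$ is an average of elements of the closed convex set $\partial_\sigma f(x)$, hence $\nabla f_\sigma(x)\in\partial_\sigma f(x)$ and $\dist(0,\partial_\sigma f(x))\le\norm{\nabla f_\sigma(x)}_2$ for every $x$ (the left side being measurable by Property \ref{SetValMeas}).

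Next I would treat the iteration as inexact gradient descent on $f_\sigma$. Write $g^k:=\tfrac1S\sum_{l=1}^S\widetilde\nabla F(w_l^k,\xi_l^k)$ and let $\mathcal{F}_k$ be the natural filtration (generated by $x^1$ and all draws before step $k$), so $x^k$ is $\mathcal{F}_k$-measurable. Because each $z_l^k$ is absolutely continuous, the Lebesgue-null set on which Property \ref{expg} fails is hit with probability $0$ no matter what $x^k$ is; combining this with (iii) yields $\EE[g^k\mid\mathcal{F}_k]=\nabla f_\sigma(x^k)$. Since $\norm{\widetilde\nabla F(w,\xi)}_2\le C(\xi)$ a.e.\ and the $S$ summands are i.i.d.\ given $\mathcal{F}_k$, the variance bound $\EE[\norm{g^k-\nabla f_\sigma(x^k)}_2^2\mid\mathcal{F}_k]\le Q/S$ follows. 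Applying the descent lemma for $L_\sigma$-smooth functions to $x^{k+1}=x^k-\eta g^k$, taking $\EE[\cdot\mid\mathcal{F}_k]$, and using unbiasedness, the variance bound, and $\eta L_\sigma\le1$ gives
\[
\EE[f_\sigma(x^{k+1})\mid\mathcal{F}_k]\le f_\sigma(x^k)-\tfrac{\eta}{2}\norm{\nabla f_\sigma(x^k)}_2^2+\tfrac{L_\sigma\eta^2 Q}{2S}.
\]

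Then I would take total expectations, sum over $k=1,\dots,K$, telescope, and bound $f_\sigma(x^1)-\EE[f_\sigma(x^{K+1})]$ by $\Delta+2L_0\tfrac{\sigma d}{d+1}<\Delta+2L_0\sigma$ using (i) together with $f_\sigma\ge f(x^*)-L_0\tfrac{\sigma d}{d+1}$. Dividing by $K$ and using that $R\sim\mathrm{uniform}\{1,\dots,K\}$ is independent of the trajectory $(x^1,\dots,x^K)$ gives
\[
\EE[\norm{\nabla f_\sigma(x^R)}_2^2]<\frac{2\Delta+4L_0\sigma}{\eta K}+\frac{L_\sigma\eta Q}{S}.
\]
Substituting $\eta,\sigma$, using $S\ge K^{1-\beta}$ and $\eta L_\sigma\le1$, collecting the resulting powers of $K$, and finally applying $\EE[X]\le\sqrt{\EE[X^2]}$ for the nonnegative variable $X=\norm{\nabla f_\sigma(x^R)}_2$ together with $\dist(0,\partial_\sigma f(x^R))\le\norm{\nabla f_\sigma(x^R)}_2$ produces \eqref{bound}.

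The main obstacle I anticipate is not the SGD recursion but the careful bookkeeping of the almost-everywhere statements: Property \ref{expg} and the identity $\nabla f_\sigma=\EE_z[\widetilde\nabla f(\cdot+z)]$ hold only off Lebesgue-null sets, so I must argue — leaning on the absolute continuity of the perturbations $z_l^k$ — that those exceptional sets are avoided with probability one along the entire run, and hence that the conditional expectations computed above are genuinely the gradients of $f_\sigma$ and not merely of some representative. A secondary technical point is pinning down (or correctly importing) the $O(\sqrt{d}\,L_0/\sigma)$ Lipschitz constant of $\nabla f_\sigma$ for uniform-ball smoothing, together with the Leibniz-rule interchange for a merely Lipschitz $f$, since the whole parameter calibration and the $\sqrt d$ appearing in \eqref{bound} rest on it.
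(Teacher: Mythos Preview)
Your approach is essentially the paper's own: the paper never names $f_\sigma$, but Lemma~\ref{ineqsmooth} combined with the total-variation estimate \eqref{EE} is exactly the descent lemma for $f_\sigma$ with the precise constant $L_\sigma=L_0\sqrt{d}/\sigma$ (the $\sqrt{d}$ coming from Property~\ref{boundprop}), and the paper's $\EE[\widetilde\nabla f(\hat w^k)\mid x^k]$ is your $\nabla f_\sigma(x^k)$. The only substantive slip is in your bound on $f_\sigma(x^1)-\EE[f_\sigma(x^{K+1})]$: since $f_\sigma(x)=\EE_z[f(x+z)]\ge f(x^*)$ for every $x$, you should use $\Delta+L_0\tfrac{\sigma d}{d+1}$ rather than $\Delta+2L_0\tfrac{\sigma d}{d+1}$. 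With your looser version the coefficient on the $L_0^2\sqrt{d}K^{-\beta}$ term comes out as $4$ instead of $2$, and the resulting inequality does not dominate \eqref{bound} for all $K$ (you would need $2\sqrt{d}K^{-\beta}\le Q/L_0^2$ to trade it against the smaller $Q$ coefficient, which need not hold). With that one-line fix your argument reproduces the stated bound.
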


Considering a standard implementation of mini-batch SGD with any chosen $K$, $S>1$, and $\eta$, PISGD can be implemented by choosing $\sigma=\eta L_0\sqrt{d}$ (using $\theta=L_0 \eta K^{\beta}$), and convergence guarantees can be computed in terms of an expected $(\epsilon_1,\epsilon_2)$-stationary point. The parameter $\theta>0$ allows for convergence guarantees to be made for any positive step size. The sample size has been fixed in the theorem, but the theorem holds for any $S\geq K^{1-\beta}$ (see equation 
\eqref{frhs}), so a valid $\beta$ always exists for any choice of $S>1$ and $K$. 
We also mention that following Property \ref{AAder} and the fact that the approximate stochastic gradient of $F(w^k_l,\xi^k_l)$ is evaluated a countable number of times, the probability of encountering a point of non-differentiability running Algorithm \ref{alg:sgd} is zero. To gain some intuition of how the right-hand side of \eqref{bound} is made small enough to ensure an expected $(\epsilon_1,\epsilon_2)$-stationary point, it can be replaced by the right-hand side of inequality \eqref{intuit} in the proof, which shows that $K$ needs to be made large enough to overcome problem specific constants and the choice of smoothing, which is less than or equal to $\epsilon_1$, and the sample size $S$ needs to be made large enough to make the variance of the stochastic step direction sufficiently small.\\

The proof of Theorem \ref{SGDF} requires the following three 
lemmas. The proofs can be found in the appendix.

\begin{lemma}	
	\label{ineqsmooth}	
	For $\{x,x',z\}\in \RR^d$, let $w=x+z$ and $w'=x'+z$. For a Lipschitz continuous function 
$f(\cdot)$ 
with approximate gradient $\widetilde{\nabla} f(\cdot)$, and any $x,x'\in \RR^d$,
\begin{alignat}{6}	
&f(w)-f(w')-\langle \widetilde{\nabla} f(w'),x-x'\rangle=\medint\int_0^1\langle\widetilde{\nabla} 
f(w'+v(x-x'))-\widetilde{\nabla} 
f(w'),x-x'\rangle dv\nonumber
\end{alignat}	
holds for almost all $z\in\RR^n$. 	
\end{lemma}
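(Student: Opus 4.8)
The plan is to reduce the claimed identity to the one-dimensional fundamental theorem of calculus applied to $f$ restricted to the segment from $w'$ to $w$, using the fact that $w-w' = (x+z)-(x'+z) = x-x'$, so this segment is exactly $\{w'+v(x-x') : v\in[0,1]\}$. First I would define $g(v) := f(w'+v(x-x'))$ on $[0,1]$, so that $g(0)=f(w')$ and $g(1)=f(w)$. Since $f$ is $L_0$-Lipschitz, $g$ is Lipschitz on $[0,1]$, hence absolutely continuous, and therefore $f(w)-f(w') = g(1)-g(0) = \int_0^1 g'(v)\,dv$. At every $v$ for which $f$ is (Fr\'echet) differentiable at $w'+v(x-x')$, the definition of differentiability gives $g'(v) = \langle \nabla f(w'+v(x-x')),\,x-x'\rangle$, and if in addition $\widetilde{\nabla} f = \nabla f$ holds at that point this equals $\langle \widetilde{\nabla} f(w'+v(x-x')),\,x-x'\rangle$. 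The identity of the lemma then follows by writing $\langle \widetilde{\nabla} f(w'),x-x'\rangle = \int_0^1 \langle \widetilde{\nabla} f(w'),x-x'\rangle\,dv$ and subtracting; the case $x=x'$ is trivial since both sides vanish.

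What remains — and this is precisely why the conclusion is stated only for almost all $z$ — is to guarantee that for a.e.\ $z\in\RR^d$, the set of parameters $v\in[0,1]$ at which $f$ fails to be differentiable at $w'+v(x-x')$, or at which $\widetilde{\nabla} f\neq\nabla f$ there, is Lebesgue-null in $[0,1]$, so that the $v$-integral above is unaffected. Let $E_f$ be the set where $f$ is not differentiable, which is Lebesgue-null by Rademacher's theorem \cite[Theorem 3.1.6]{federer199}, and let $D := \{y\notin E_f : \widetilde{\nabla} f(y)\neq \nabla f(y)\}$, which is Lebesgue-null by the definition of an approximate gradient; enlarge $N := E_f\cup D$ to a Borel null set $N'$. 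Consider $T(z,v) := x' + z + v(x-x')$ on $\RR^d\times[0,1]$. For each fixed $v$ the map $z\mapsto T(z,v)$ is a translation, hence $m^d$-preserving, so $m^d(\{z : T(z,v)\in N'\}) = m^d(N') = 0$. Applying Tonelli to the nonnegative Borel function $(z,v)\mapsto \mathbf{1}_{N'}(T(z,v))$ (Borel since $N'$ is Borel and $T$ is continuous) gives $\int_{\RR^d}\big(\int_0^1 \mathbf{1}_{N'}(T(z,v))\,dv\big)\,dz = \int_0^1 m^d(\{z : T(z,v)\in N'\})\,dv = 0$, so for a.e.\ $z$ the inner integral vanishes, i.e.\ $w'+v(x-x')\notin N'$ for a.e.\ $v\in[0,1]$. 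For every such $z$ the reasoning of the first paragraph applies verbatim, which proves the lemma.

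The main obstacle is exactly the Tonelli/measurability step: one must pass from the Lebesgue-null exceptional set to a Borel null set before composing with $T$ (a continuous preimage of a merely Lebesgue-measurable set need not be measurable), and one must carefully track that ``$f$ differentiable at $w'+v(x-x')$ for a.e.\ $v$'' for a fixed good $z$ is precisely the hypothesis under which the one-dimensional fundamental theorem of calculus for absolutely continuous functions identifies $\int_0^1 g'(v)\,dv$ with $\int_0^1 \langle \widetilde{\nabla} f(w'+v(x-x')),x-x'\rangle\,dv$. The remaining ingredients — Lipschitzness and hence absolute continuity of $g$, boundedness and measurability of the integrand on the right-hand side, the chain-rule identification of $g'$, and the final algebraic rearrangement — are routine.
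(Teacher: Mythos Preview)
Your proof is correct and follows essentially the same approach as the paper: both reduce to the one-dimensional fundamental theorem of calculus for the Lipschitz (hence absolutely continuous) function $v\mapsto f(w'+v(x-x'))$, and both use Tonelli on the indicator of the bad set composed with $(z,v)\mapsto x'+z+v(x-x')$, integrating first in $z$ (translation invariance) to conclude that for a.e.\ $z$ the exceptional set of $v$ is null. Your version is slightly more careful about enlarging the null exceptional set to a Borel set before composing, whereas the paper invokes directly that the differentiability set of a continuous function is Borel; either route works.
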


\begin{lemma}	
	\label{gradbound}	
	The norms of the approximate gradients are bounded, with
${\norm{\widetilde{\nabla} f(w)}_2\leq L_0}$ and 
$\norm{\widetilde{\nabla} F(w,\xi)}_2\leq C(\xi)$ almost everywhere.
\end{lemma}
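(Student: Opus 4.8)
The plan is to first establish the elementary pointwise fact that a $C$-Lipschitz function on $\RR^d$ which happens to be differentiable at a point has gradient of norm at most $C$ there, and then to propagate this bound through the almost-everywhere identifications defining the approximate gradients $\widetilde{\nabla} f$ and $\widetilde{\nabla} F$.

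For the pointwise step I would argue as follows: if $g:\RR^d\rightarrow\RR$ is $C$-Lipschitz and differentiable at $w$, then for any unit vector $u$ the directional derivative satisfies $\langle\nabla g(w),u\rangle=\lim_{t\downarrow 0}\frac{g(w+tu)-g(w)}{t}$, and since $|g(w+tu)-g(w)|\leq C\norm{tu}_2=C|t|$ we obtain $|\langle\nabla g(w),u\rangle|\leq C$; choosing $u=\nabla g(w)/\norm{\nabla g(w)}_2$ when the gradient is nonzero gives $\norm{\nabla g(w)}_2\leq C$ (the bound being trivial otherwise).

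Applying this to $f$: since $f$ is $L_0$-Lipschitz on all of $\RR^d$, Rademacher's theorem gives differentiability outside a Lebesgue-null set $E_f$, and the pointwise step yields $\norm{\nabla f(w)}_2\leq L_0$ for every $w\notin E_f$; because $\widetilde{\nabla} f$ agrees with $\nabla f$ off a Lebesgue-null set among the points of differentiability, the set of $w$ with $\norm{\widetilde{\nabla} f(w)}_2>L_0$ is Lebesgue-null, giving the first bound. For $F$, the pointwise step applies to $g=F(\cdot,\xi)$ with $C=C(\xi)$ for every $\xi\in\Xi$, so $\norm{\nabla_w F(w,\xi)}_2\leq C(\xi)$ whenever $\xi\in\Xi$ and $F(\cdot,\xi)$ is differentiable at $w$. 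I would then assemble three null sets on the product space $(\RR^{d+p},\B_{\RR^{d+p}},m^d\times P_\xi)$: the set $N_1$ where $F$ fails to be differentiable in $w$, which is $(m^d\times P_\xi)$-null by Property \ref{AAder}; the set $N_2=\RR^d\times\Xi^c$, which is $(m^d\times P_\xi)$-null by Tonelli together with $P_\xi(\Xi^c)=0$; and the set $N_3$ where $\widetilde{\nabla} F\neq\nabla_w F$ among points of differentiability, which is $(m^d\times P_\xi)$-null by the definition of $\widetilde{\nabla} F$. Off $N_1\cup N_2\cup N_3$ we have $\norm{\widetilde{\nabla} F(w,\xi)}_2=\norm{\nabla_w F(w,\xi)}_2\leq C(\xi)$, establishing the second bound.

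The part that needs care, rather than a genuine obstacle, is the bookkeeping between two notions of ``almost everywhere'': Lebesgue measure on $\RR^d$ for $f$ and for the individual sections $F(\cdot,\xi)$, versus the product measure $m^d\times P_\xi$ for the joint statement about $F$. In particular one must invoke Tonelli --- legitimate since $m^d$ and $P_\xi$ are $\sigma$-finite --- to turn the $P_\xi$-null set of ``bad'' $\xi$ into a product-null slab, and one must rely on Property \ref{AAder} for joint differentiability rather than applying Rademacher's theorem directly on $\RR^{d+p}$.
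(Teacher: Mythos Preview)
Your proposal is correct and follows essentially the same approach as the paper: both use the directional derivative in the direction of the gradient to convert the Lipschitz bound into a gradient-norm bound, then invoke Rademacher/Property~\ref{AAder} and the definition of the approximate gradient to conclude almost everywhere. The paper's proof is terser (it writes $\norm{\widetilde{\nabla} f(w)}^2_2=\lim_{h\to 0}\frac{f(w+h\widetilde{\nabla} f(w))-f(w)}{h}\leq L_0\norm{\widetilde{\nabla} f(w)}_2$ and similarly for $F$), whereas you spell out the null-set bookkeeping on the product space more carefully, but the underlying argument is the same.
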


\begin{lemma}	
	\label{stobound}	
	Let $x\in\RR^d$ and $z\in\RR^d$ be random variables, where $z$ is absolutely continuous, and $x$, $z$, and $\xi$ (as previously defined) are mutually independent. For any $S\in \ZZ_{>0}$, let  $$\overline{\nabla}F:=\frac{1}{S}\sum_{l=1}^S\widetilde{\nabla}	F(x+z_l,\xi_l),$$ 	
	where $z_l\sim P_z$ and $\xi_l\sim P_{\xi}$ for $l=1,...,S$. It holds that  
	\begin{alignat}{6}
	&&&\EE[\norm{\overline{\nabla}F}^2_2-\norm{\EE[\overline{\nabla}F|x]}^2_2]\nonumber\\
	&&=&\EE[\norm{\overline{\nabla}F-\EE[\overline{\nabla}F|x]}^2_2]\nonumber\\
	&&\leq&\frac{Q}{S},\nonumber
	\end{alignat}
	where $Q:=\EE[C(\xi)^2]$.
\end{lemma}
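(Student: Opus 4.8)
The plan is to establish the two assertions of Lemma \ref{stobound} separately: the displayed equality, which is just the conditional bias--variance decomposition, and the bound by $Q/S$, which comes from the conditional independence (given $x$) of the $S$ sampled approximate gradients together with the uniform bound of Lemma \ref{gradbound}. Before either step I would record well-definedness: by Lemma \ref{gradbound} we have $\norm{\widetilde{\nabla} F(x+z_l,\xi_l)}_2\le C(\xi_l)$ almost surely --- here one uses that, conditional on $(x,\xi_l)$, the perturbed point $x+z_l$ has a distribution absolutely continuous with respect to $m^d$ (since $z$ is absolutely continuous and independent of $x$ and $\xi_l$), so it avoids the Lebesgue-null exceptional set of Lemma \ref{gradbound} with probability one. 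Since $\EE[C(\xi_l)^2]=Q<\infty$, each summand lies in $L^2$, hence so does $\overline{\nabla}F$, and in particular $\EE[\overline{\nabla}F\mid x]$ is well defined; it also coincides for each $l$ with the common conditional mean $\bar\mu:=\EE[\widetilde{\nabla} F(x+z_l,\xi_l)\mid x]$, because the pairs $(z_l,\xi_l)$ are identically distributed and independent of $x$.

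For the equality, write $\mu:=\EE[\overline{\nabla}F\mid x]$ and expand $\norm{\overline{\nabla}F-\mu}_2^2=\norm{\overline{\nabla}F}_2^2-2\langle \overline{\nabla}F,\mu\rangle+\norm{\mu}_2^2$. Taking conditional expectation given $x$ and using $\EE[\langle \overline{\nabla}F,\mu\rangle\mid x]=\langle \EE[\overline{\nabla}F\mid x],\mu\rangle=\norm{\mu}_2^2$ gives $\EE[\norm{\overline{\nabla}F-\mu}_2^2\mid x]=\EE[\norm{\overline{\nabla}F}_2^2\mid x]-\norm{\mu}_2^2$. Taking total expectation then yields $\EE[\norm{\overline{\nabla}F}_2^2-\norm{\EE[\overline{\nabla}F\mid x]}_2^2]=\EE[\norm{\overline{\nabla}F-\EE[\overline{\nabla}F\mid x]}_2^2]$, which is the first claim.

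For the bound I would work conditionally on $x$. Set $g_l:=\widetilde{\nabla} F(x+z_l,\xi_l)$; conditional on $x$ these are i.i.d., since each $g_l$ is a fixed Borel function of $(z_l,\xi_l)$ and the pairs $(z_l,\xi_l)$ are mutually independent and independent of $x$. Expanding $\norm{\overline{\nabla}F-\mu}_2^2=S^{-2}\sum_{l,l'}\langle g_l-\bar\mu,g_{l'}-\bar\mu\rangle$ and taking $\EE[\cdot\mid x]$, the off-diagonal terms vanish because $\EE[g_l-\bar\mu\mid x]=0$ and $g_l,g_{l'}$ are conditionally independent for $l\ne l'$, leaving $\EE[\norm{\overline{\nabla}F-\mu}_2^2\mid x]=S^{-2}\sum_{l=1}^S\EE[\norm{g_l-\bar\mu}_2^2\mid x]$. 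Each term satisfies $\EE[\norm{g_l-\bar\mu}_2^2\mid x]=\EE[\norm{g_l}_2^2\mid x]-\norm{\bar\mu}_2^2\le \EE[\norm{g_l}_2^2\mid x]\le \EE[C(\xi_l)^2\mid x]=\EE[C(\xi_l)^2]=Q$, using Lemma \ref{gradbound} and the independence of $\xi_l$ and $x$. Hence $\EE[\norm{\overline{\nabla}F-\mu}_2^2\mid x]\le Q/S$, and taking total expectation gives $\EE[\norm{\overline{\nabla}F-\EE[\overline{\nabla}F\mid x]}_2^2]\le Q/S$.

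The only genuinely delicate point is the measurability and almost-sure bookkeeping in the first step: one must transfer the ``almost everywhere'' statement of Lemma \ref{gradbound} on the product space $(\RR^{d+p},\B_{\RR^{d+p}},m^d\times P_\xi)$ to an ``almost surely'' statement at the random evaluation points $x+z_l$, which requires the absolute continuity of $z$ and a Fubini slicing argument. Once that is in place, the remaining steps are routine second-moment computations, so I expect no further obstacles.
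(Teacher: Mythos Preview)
Your proposal is correct and follows essentially the same approach as the paper's proof: the conditional bias--variance identity is obtained by expanding the square and using the tower property, and the $Q/S$ bound comes from conditional independence of the $g_l$ given $x$ to eliminate cross terms followed by Lemma \ref{gradbound}. The only cosmetic difference is that the paper carries out the variance expansion coordinate-wise while you work directly at the vector level, but the logic is identical.
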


\begin{proof}[Proof of Theorem \ref{SGDF}.] 	
	Assume PISGD is run for $K$ iterations instead of 
	$R-1$, and for simplicity let 
		$$\overline{\nabla}F^k:=\frac{1}{S}\sum_{l=1}^S\widetilde{\nabla} F(w_l^k,\xi_l^k)$$ 
	for $k=1,...,K$, which is the random direction PISDG moves in each iteration. For any two iterates $x^k$ and $x^{k+1}$, let $\hat{w}^k=x^k+\hat{z}^k$ and 
	$\hat{w}^{k+1}=x^{k+1}+\hat{z}^k$ for a single sample $\hat{z}^k\sim 
	U(B(\sigma))$. As $\hat{z}^k$ is sampled uniformly from a Euclidean ball in $\RR^n$, 
	Lemma \ref{ineqsmooth} can be applied such that given $x^k$ and $x^{k+1}$, for almost all 
	values of $\hat{z}^k$,
	\begin{alignat}{6}
	&&&f(\hat{w}^{k+1})-f(\hat{w}^k)-\langle\widetilde{\nabla} 
	f(\hat{w}^k),x^{k+1}-x^k\rangle\nonumber\\
	&&=&\medint\int_0^1\langle\widetilde{\nabla} f(\hat{w}^k+v(x^{k+1}-x^k))-\widetilde{\nabla} 
	f(\hat{w}^k),x^{k+1}-x^k\rangle 
	dv,\label{bigeq}
	\end{alignat}
	since the distribution of $\hat{z}^k$ is absolutely continuous with respect to the Lebesgue measure.
	Considering now the expectation of \eqref{bigeq},
	\begin{alignat}{6}
	&&&\EE[f(\hat{w}^{k+1})-f(\hat{w}^k)-\langle\widetilde{\nabla} 
	f(\hat{w}^k),x^{k+1}-x^k\rangle]\nonumber\\
	&&=&\EE(\EE[f(\hat{w}^{k+1})-f(\hat{w}^k)-\langle\widetilde{\nabla} 
	f(\hat{w}^k),x^{k+1}-x^k\rangle|x^{k+1},x^k])\nonumber\\	
	&&=&\EE(\EE[\medint\int_0^1\langle\widetilde{\nabla} f(\hat{w}^k+v(x^{k+1}-x^k))-\widetilde{\nabla} 
	f(\hat{w}^k),x^{k+1}-x^k\rangle 
	dv|x^{k+1},x^k])\nonumber\\
	&&=&\EE[\medint\int_0^1\langle\widetilde{\nabla} f(\hat{w}^k+v(x^{k+1}-x^k))-\widetilde{\nabla} 
	f(\hat{w}^k),x^{k+1}-x^k\rangle 
	dv].\nonumber
	\end{alignat}		
	
	Applying $x^{k+1}-x^k=-\eta\overline{\nabla}F^k$, and given that for any $l\in 
	\{1,2,...,S\}$, $\EE[f(\hat{w}^{k+1})-f(\hat{w}^k)]=\EE[f(w_l^{k+1})-f(w_l^k)]$,
	\begin{alignat}{6}
	&&&\EE[f(w_l^{k+1})-f(w_l^k)+\eta\langle\widetilde{\nabla} 
	f(\hat{w}^k),\overline{\nabla}F^k\rangle]\nonumber\\
	&&=&\EE[\medint\int_0^1\langle\widetilde{\nabla} 
	f(\hat{w}^k-v\eta\overline{\nabla}F^k)-\widetilde{\nabla} f(\hat{w}^k),-\eta 	\overline{\nabla}F^k\rangle dv].\label{ineq2s}
	\end{alignat}

	Our first goal is to bound each side of \eqref{ineq2s} in terms of only $f(w_l^{k+1})-f(w_l^k)$ 
	and $\overline{\nabla}F^k$. We will 	
	analyze each side of \eqref{ineq2s} separately, then combine the analysis to prove the convergence 
	of the algorithm.\\  
		
	\noindent\textbf{Analysis of the left-hand side of \eqref{ineq2s}:}\\
	
	For all $k\in[1,...,K]$,
	\begin{alignat}{6}
	&&&\EE[f(w_l^{k+1})-f(w_l^k)+\eta\langle\widetilde{\nabla} 
	f(\hat{w}^k),\overline{\nabla}F^k\rangle]\nonumber\\
	&&=& \EE[f(w_l^{k+1})-f(w_l^k)]+\eta\EE(\EE[\langle\widetilde{\nabla} 
	f(\hat{w}^k),\overline{\nabla}F^k\rangle|x^k])\nonumber\\
	&&=&\EE[f(w_l^{k+1})-f(w_l^k)]+\eta\EE(\langle\EE[\widetilde{\nabla} 
	f(\hat{w}^k)|x^k],\EE[\overline{\nabla}F^k|x^k]\rangle).\label{cind}
	\end{alignat}
	The last equality holds since $\widetilde{\nabla} f(\hat{w}^k)$ and 
	$\overline{\nabla}F^k$ are conditionally independent random 
	variables with respect to $x^k$, so for all $j=1,..,d$,\footnote{Equalities involving conditional expectations are to be interpreted as holding almost surely.} $\EE[\widetilde{\nabla}_j 
	f(\hat{w}^k)\cdot\overline{\nabla}_jF^k|x^k]=\EE[\widetilde{\nabla}_j 
	f(\hat{w}^k)|x^k]\cdot\EE[\overline{\nabla}_jF^k|x^k]$. Focusing on 	
	$\EE[\overline{\nabla}F^k|x^k]$, 	
	\begin{alignat}{6}
	\EE[\overline{\nabla}F^k|x^k]&&=&\EE[\widetilde{\nabla} F(w^k_l,\xi_l^k)|x^k]\nonumber\\
	&&=&\EE[\widetilde{\nabla} f(w_l^k)|x^k]\label{tuffeq}\\
	&&=&\EE[\widetilde{\nabla} f(\hat{w}^k)|x^k],\label{dcond}
	\end{alignat}	
	for an arbitrary $l\in[1,...,S]$. The equality \eqref{tuffeq} follows from Property \ref{expg}: First let  
	$$g(y):=\EE[\widetilde{\nabla} F(y+z^k_l,\xi_l^k)],$$
	then 
	$$\EE[\widetilde{\nabla} F(w^k_l,\xi_l^k)|x^k]=g(x^{k})$$	
	since $z^k_l$ and $\xi_l^k$ are independent of $x^k$, see for example \cite[Lemma 
	2.3.4]{shreve2004}. From Lemma \ref{gradbound}, $|\widetilde{\nabla}_j F(y+z^k_l,\xi_l^k)|\leq C(\xi_l^k)$, which implies that $\widetilde{\nabla}_j F(y+z^k_l,\xi_l^k)\in L^1(P_{z^k_l}\times P_{\xi_l^k})$ for any $y\in\RR^d$. Given that $z_l^k$ and $\xi_l^k$ are 
	independent, Fubini's theorem can be applied with  
	$$g(y)=\EE_{z_l^k}[\EE_{\xi_l^k}[\widetilde{\nabla} F(y+z^k_l,\xi_l^k)]].$$
	From Property \ref{expg}, $\EE_{\xi_l^k}[\widetilde{\nabla} 
	F(y+z^k_l,\xi_l^k)]=\widetilde{\nabla} f(y+z^k_l)$ for 
	almost all 
	$z_l^k$, hence $g(y)=\EE[\widetilde{\nabla} f(y+z^k_l)]$ with $g(x^k)=\EE[\widetilde{\nabla} 
	f(w_l^k)|x^k]$, again from the independence of $x^k$ and $z^k_l$.\\
	
	Using \eqref{dcond} in \eqref{cind},	
	\begin{alignat}{6}
		&&&\EE[f(w_l^{k+1})-f(w_l^k)+\eta\langle\widetilde{\nabla} 
	f(\hat{w}^k),\overline{\nabla}F^k\rangle]\nonumber\\
	&&=& \EE[f(w_l^{k+1})-f(w_l^k)]+\eta\EE(\norm{\EE[\overline{\nabla}F^k|x^k]}^2_2).\label{eq:2}
	\end{alignat}
	
	\noindent\textbf{Analysis of the right-hand side of \eqref{ineq2s}:}\\
	
	\noindent We now analyze 
	\begin{alignat}{6}
	\EE[\medint\int_0^1\langle\widetilde{\nabla} f(\hat{w}^k-v\eta\overline{\nabla}F^k)-\widetilde{\nabla} 
	f(\hat{w}^k),-\eta 
	\overline{\nabla}F^k\rangle dv]\nonumber	
	\end{alignat}
	for any $k\in [1,...,K]$. As the negation, addition, composition, and product of real-valued Borel measurable functions,	
	${\langle\widetilde{\nabla} 
		f\left(\hat{w}^k-v\eta\overline{\nabla}F^k\right)-\widetilde{\nabla} f(\hat{w}^k),-\eta\overline{\nabla}F^k \rangle}$ is a measurable function on	
	${(\RR^{d+S(d+p)}\times[0,1],\B_{\RR^{d+S(d+p)}}\otimes \B_{[0,1]})}$, where 
	$(\hat{w}^k,\{w_l^k\},\{\xi_l^k\})\in \RR^{d+S(d+p)}$ and $v\in[0,1]$. Given that 
	$\widetilde{\nabla} f(w)$ and 
	$\widetilde{\nabla} F(w,\xi)$ are bounded almost everywhere by Lemma \ref{gradbound}, and the probability measure and the Lebesgue measure that the expectation and integral are with respect to are both finite, the function is integrable and Fubini's theorem can be applied: 
	\begin{alignat}{6}
	&&&\EE[\medint\int_0^1\langle\widetilde{\nabla} 
	f\left(\hat{w}^k-v\eta\overline{\nabla}F^k\right)-\widetilde{\nabla} 
	f(\hat{w}^k),-\eta\overline{\nabla}F^k \rangle dv]\nonumber\\
	&=&&\medint\int_0^1\EE[\langle\widetilde{\nabla} 
	f(\hat{w}^k-v\eta\overline{\nabla}F^k)-\widetilde{\nabla} 
	f(\hat{w}^k),-\eta\overline{\nabla}F^k\rangle]dv\nonumber\\
	&=&&\medint\int_0^1\EE(\EE[\langle\widetilde{\nabla} 
	f(\hat{w}^k-v\eta\overline{\nabla}F^k)-\widetilde{\nabla} 
	f(\hat{w}^k),-\eta\overline{\nabla}F^k\rangle|\overline{\nabla}F^k,x^k])dv\nonumber\\
	&=&&\medint\int_0^1\EE[\langle\EE[\widetilde{\nabla} 
	f(\hat{w}^k-v\eta\overline{\nabla}F^k)-\widetilde{\nabla} 
	f(\hat{w}^k)|\overline{\nabla}F^k,x^k],-\eta\overline{\nabla}F^k\rangle]dv\nonumber\\
	&\leq&&\medint\int_0^1\EE[\norm{\EE[\widetilde{\nabla} 
	f(\hat{w}^k-v\eta\overline{\nabla}F^k)-\widetilde{\nabla} 
		f(\hat{w}^k)|\overline{\nabla}F^k,x^k]}_2\cdot\norm{-\eta\overline{\nabla}F^k}_2]dv.\label{eq:3}
	\end{alignat}
	
	Focusing on $\widetilde{\nabla} 
		f(\hat{w}^k-v\eta\overline{\nabla}F^k)-\widetilde{\nabla} 
		f(\hat{w}^k)$ within $\EE[\widetilde{\nabla} 
	f(\hat{w}^k-v\eta\overline{\nabla}F^k)-\widetilde{\nabla} 
	f(\hat{w}^k)|\overline{\nabla}F^k,x^k]$, and writing $\hat{w}^k=x^k+\hat{z}^k$, the only random variable which is not measurable with respect to the $\sigma$-algebra generated by $\overline{\nabla}F^k$ and $x^k$, is $\hat{z}^k$, which is independent of $\overline{\nabla}F^k$ and $x^k$. 
	Similar to showing \eqref{tuffeq}, letting 
	\begin{alignat}{6}\label{dummy2}
	g(y,y'):=\EE[\widetilde{\nabla} f(y+\hat{z}^k)-\widetilde{\nabla}f(y'+\hat{z}^k)],
	\end{alignat}
	then 
	$$\EE[\widetilde{\nabla}					f(\hat{w}^k-v\eta\overline{\nabla}F^k)-\widetilde{\nabla}			f(\hat{w}^k)|\overline{\nabla}F^k,x^k]=g(x^k-v\eta\overline{\nabla}F^k,x^k).$$	
	Considering the norm of \eqref{dummy2} for 
	arbitrary $y,y'\in\RR^d$, and setting $w=y+\hat{z}^k$ and $w'=y'+\hat{z}^k$,
	\begin{alignat}{6}
	\norm{\EE[\widetilde{\nabla} f(w)-\widetilde{\nabla} f(w')]}_2&=&&\norm{\medint\int_{\RR^d} 
	\widetilde{\nabla} 
		f(w)p(w-y)dw-\medint\int_{\RR^d}\widetilde{\nabla} 
		f(w')p(w'-y')dw'}_2\nonumber\\
	&=&&\norm{\medint\int_{\RR^d} \widetilde{\nabla} f(w)(p(w-y)-p(w-y'))dw}_2\nonumber\\ 
	&\leq&&\medint\int_{\RR^d}\norm{\widetilde{\nabla}	f(w)}_2|p(w-y)-p(w-y')|dw\nonumber\\ 
	&\leq&&L_0\medint\int_{\RR^d} |p(w-y)-p(w-y')|dw\nonumber\nonumber\\
	&\leq&&L_0\frac{\lambda(d)}{\sigma}\frac{d!!}{(d-1)!!}\norm{y-y'}_2\nonumber\\
	&\leq&&L_0\frac{\sqrt{d}}{\sigma}\norm{y-y'}_2,\label{EE}
	\end{alignat}
	where the second inequality follows from Lemma \ref{gradbound} and the third inequality bounding 
	the integral $\medint\int_{\RR^d} |p(w-y)-p(w-y')|dw$ can be found in the 
	proof of Lemma 8 in \cite{yousefian2011} beginning at equation (33), where $\lambda(d)=\frac{2}{\pi}$ when $d$ is even and $1$ when $d$ is odd. The final inequality uses 
	the bound $\frac{\lambda(d)d!!}{(d-1)!!}\leq\sqrt{d}$, which is proven in Property \ref{boundprop} 
	in the appendix. The bound \eqref{EE} evaluated at $y=x^k-v\eta\overline{\nabla}F^k$ and $y'=x^k$ 
	gives 
	\begin{alignat}{6}
	&&&\norm{\EE[\widetilde{\nabla} f(\hat{w}^k-v\eta\overline{\nabla}F^k)-\widetilde{\nabla} 
		f(\hat{w}^k)|\overline{\nabla}F^k,x^k]}_2\leq 
		L_0\frac{\sqrt{d}}{\sigma}\norm{-v\eta\overline{\nabla}F^k}_2.\nonumber
	\end{alignat}	
	Applying this bound in \eqref{eq:3},	
	\begin{alignat}{6}
	&&&\EE[\medint\int_0^1\langle\widetilde{\nabla} 
	f\left(\hat{w}^k-v\eta\overline{\nabla}F^k\right)-\widetilde{\nabla} 
	f(\hat{w}^k),-\eta\overline{\nabla}F^k \rangle dv]\nonumber\\
	&\leq&&\medint\int_0^1\EE(L_0\frac{\sqrt{d}}{\sigma}\norm{-v\eta\overline{\nabla}F^k}_2\cdot\norm{-\eta\overline{\nabla}F^k}_2)dv\nonumber\\
	&=&&\medint\int_0^1L_0\frac{\sqrt{d}}{\sigma}\eta^2v\EE[\norm{\overline{\nabla}F^k}^2_2]dv\nonumber\\
	&=&&L_0\frac{\sqrt{d}}{\sigma}\frac{\eta^2}{2}\EE[\norm{\overline{\nabla}F^k}^2_2].\label{eq:4}
	\end{alignat}	
	
	\noindent\textbf{Proving the convergence of PISGD:}\\
	
	\noindent We now combine the analysis of equation \eqref{ineq2s}, using \eqref{eq:2} and 
	\eqref{eq:4} to get, for all $k\in[1,...,K]$,  
	the inequality
	\begin{alignat}{6}
	&\EE[f(w_l^{k+1})-f(w_l^k)]+\eta\EE(\norm{\EE[\overline{\nabla}F^k|x^k]}^2_2)\nonumber\\
	=&\EE[f(w_l^{k+1})-f(w_l^k)+\eta\langle\widetilde{\nabla} 
	f(\hat{w}^k),\overline{\nabla}F^k\rangle]\nonumber\\
	=&\EE[\medint\int_0^1\langle\widetilde{\nabla} f(\hat{w}^k-v\eta\overline{\nabla}F^k)-\widetilde{\nabla} 
	f(\hat{w}^k),-\eta 
	\overline{\nabla}F^k\rangle dv]\nonumber\\ 
	\leq&L_0\frac{\sqrt{d}}{\sigma}\frac{\eta^2}{2}\EE[\norm{\overline{\nabla}F^k}^2_2].\nonumber
	\end{alignat}	
	
	Adding $\eta\EE[\norm{\overline{\nabla}F^k}^2_2]$ to both sides and rearranging:
	\begin{alignat}{6}	
	&&&\EE[f(w_l^{k+1})-f(w_l^k)]+\eta\EE[\norm{\overline{\nabla}F^k}^2_2]\nonumber\\	
	&&\leq&L_0\frac{\sqrt{d}}{\sigma}\frac{\eta^2}{2}\EE[\norm{\overline{\nabla}F^k}^2_2]+\eta\EE[\norm{\overline{\nabla}F^k}^2_2]
	-\eta\EE(\norm{\EE[\overline{\nabla}F^k|x^k]}^2_2)\nonumber\\
	&&=&L_0\frac{\sqrt{d}}{\sigma}\frac{\eta^2}{2}\EE[\norm{\overline{\nabla}F^k}^2_2]
	+\eta\EE[\norm{\overline{\nabla}F^k}^2_2
	-\norm{\EE[\overline{\nabla}F^k|x^k]}^2_2]\nonumber\\
	&&\leq&L_0\frac{\sqrt{d}}{\sigma}\frac{\eta^2}{2}\EE[\norm{\overline{\nabla}F^k}^2_2]+\eta
	 \frac{Q}{S},\label{Qbound}
	 \end{alignat}
	 using Lemma \ref{stobound}. Rearranging,
	\begin{alignat}{6}	 
	&&\left(\eta-L_0\frac{\sqrt{d}}{\sigma}\frac{\eta^2}{2}\right)\EE[\norm{\overline{\nabla}F^k}^2_2]\leq&\EE[f(w_l^k)-f(w_l^{k+1})]+\eta\frac{Q}{S}.\nonumber
	\end{alignat}
	Summing these inequalities for $k=1,...,K$,	
	\begin{alignat}{6}
	&&&\left(\eta-L_0\frac{\sqrt{d}}{\sigma}\frac{\eta^2}{2}\right)\sum_{k=1}^K\EE[\norm{\overline{\nabla}F^k}^2_2]\leq&\EE[f(w_l^1)-f(w_l^{K+1})]+\eta K\frac{Q}{S}.\nonumber
	\end{alignat}	
	As $R$ was sampled uniformly over $\{1,2,...,K\}$,
	\begin{alignat}{6}	
	&&\left(\eta-L_0\frac{\sqrt{d}}{\sigma}\frac{\eta^2}{2}\right)\EE[\norm{\overline{\nabla}F^R}^2_2]\leq&\frac{1}{K}\EE[f(w_l^1)-f(w_l^{K+1})]+\eta
	 \frac{Q}{S}\nonumber\\
	&&\leq&\frac{1}{K}(\EE[f(w_l^1)]-f(x^*))+\eta \frac{Q}{S}\nonumber\\
	&&\leq&\frac{1}{K}(f(x^1)+L_0\EE[\norm{z_l^1}_2]-f(x^*))+\eta \frac{Q}{S}\nonumber\\
	&&=&\frac{1}{K}(\Delta+L_0\frac{\sigma d}{d+1})+\eta \frac{Q}{S}.\nonumber
	\end{alignat}
	The last inequality uses the Lipschitz continuity of 
	$f(w)$, and the equality uses \eqref{eq:21} and sets $f(x^1)-f(x^*)=\Delta$. Taking 
	$\eta=\frac{\sigma}{L_0\sqrt{d}}$,
	\begin{alignat}{6}
	\EE[\norm{\overline{\nabla} F^R}^2_2]
	&&\leq&\frac{2}{K}L_0\frac{\sqrt{d}}{\sigma}(\Delta +L_0\frac{\sigma 
	d}{d+1})+2\frac{Q}{S}\label{intuit}\\
	&&<&2\frac{L_0}{K}\frac{\sqrt{d}}{\sigma}\Delta 
	+2\frac{L_0^2}{K}\sqrt{d}+2\frac{Q}{S}.\nonumber
	\end{alignat}	
	Setting $\sigma=\theta \sqrt{d}K^{-\beta}$ and $S\geq K^{1-\beta}$, e.g. $S=\lceil 
	K^{1-\beta}\rceil$, 
	\begin{alignat}{6}
	\EE[\norm{\overline{\nabla} F^R}^2_2]
	&&<&2 K^{\beta-1}\frac{L_0}{\theta}\Delta 
	+2\frac{L_0^2}{K}\sqrt{d}+2K^{\beta-1}Q\nonumber\\	
	&&=&2K^{\beta-1}\left(\frac{L_0}{\theta}\Delta+L_0^2\sqrt{d} 
	K^{-\beta}+Q\right),\label{frhs}
	\end{alignat}
	and 
	$\eta=\frac{\theta}{L_0}K^{-\beta}$. In addition,
	\begin{alignat}{6}
	\EE[\norm{\overline{\nabla} F^R}^2_2]
	&&=&\EE[\norm{\frac{1}{S}\sum\limits_{l=1}^S\widetilde{\nabla} 
		F(w_l^R,\xi_l^R)}^2_2]\nonumber\\
	&&=&\EE(\EE[\norm{\frac{1}{S}\sum\limits_{l=1}^S\widetilde{\nabla} 
		F(w_l^R,\xi_l^R)}^2_2|x^R])\nonumber\\	
	&&\geq&\EE(\norm{\EE[\frac{1}{S}\sum\limits_{l=1}^S\widetilde{\nabla} 
		F(w_l^R,\xi_l^R)|x^R]}^2_2)\nonumber\\
	&&=&\EE[\norm{\EE[\frac{1}{S}\sum_{l=1}^S\widetilde{\nabla} 
		f(w_l^R)|x^R]}^2_2]\label{hpineq}\\
	&&\geq&\EE[\dist(0,\partial_{\sigma}f(x^R))^2].\label{flhs}
	\end{alignat}
	The third equality follows from Property \ref{expg} like \eqref{tuffeq}. For all 	
	$l=1,...,S$, $w^R_l=x^R+z_l^R\in x^R+B(\sigma)$. 
	The gradient is always contained in the Clarke subdifferential wherever a Lipschitz continuous 
	function is differentiable, so for almost every $z_l^R$, 
	the 
	approximate gradient $\widetilde{\nabla} f(w^R_l)\in 
	\partial_{\sigma}f(x^R)$. The convex combination, 
	$\frac{1}{S}\sum_{l=1}^S\widetilde{\nabla} f(w^R_l)\in \partial_{\sigma}f(x^R)$ almost surely as 
	well, 
	given that $\partial_{\sigma}f(x^R)$ is a convex set, hence   
	$\EE[\frac{1}{S}\sum_{l=1}^S\widetilde{\nabla} f(w^R_l)|x^R]\in \partial_{\sigma}f(x^R)$, 
	resulting 
	in the final inequality. Combining \eqref{frhs} and \eqref{flhs}, and using Jensen's inequality, 
	\begin{alignat}{6}
	\EE[\dist(0,\partial_{\sigma}f(x^R))]&&<&	
	K^{\frac{\beta-1}{2}}\sqrt{2\left(\frac{L_0}{\theta}\Delta+L_0^2\sqrt{d} K^{-\beta}+Q\right)}.\nonumber
	\end{alignat}	
\end{proof}

\subsection{Computational Complexity}
\label{comp}

The following corollary establishes computational complexities for finding an 
$(\epsilon_1,\epsilon_2)$-stationary point in expectation in terms of the number of stochastic 
approximate gradient computations $\widetilde{\nabla} F(w,\xi)$, which we will simply refer to as 
gradient calls in this subsection. For example, choosing 
$\beta=\frac{1}{3}$, the complexity is 
$O(\min(\epsilon_1,\epsilon_2)^{-5})$ and for $\beta=\frac{1}{2}$, it is 
$O(\max(\epsilon_1^{-3},\epsilon_2^{-6}))$. 
\begin{corollary}	
	\label{ssdcomp}
	For $\beta\in(0,1)$, an expected $(\epsilon_1,\epsilon_2)$-stationary point \eqref{eps} can be 
	computed with 	
	$O\left(\max\left(\epsilon_1^{\frac{\beta-2}{\beta}},\epsilon_2^{-2\frac{2-\beta}{1-\beta}}\right)\right)$
	gradient calls.
\end{corollary}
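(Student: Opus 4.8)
The plan is to read off from Theorem \ref{SGDF} the two requirements on $K$ that guarantee an expected $(\epsilon_1,\epsilon_2)$-stationary point, bound the resulting number of iterations, and then multiply by the per-iteration cost of $S$ gradient calls to obtain the stated complexity.

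First I would observe that whenever $\sigma\leq\epsilon_1$ we have $w+B(\sigma)\subseteq w+B(\epsilon_1)$, so from the definition \eqref{clesub} of the Clarke $\epsilon$-subdifferential, $\partial_{\sigma}f(w)\subseteq\partial_{\epsilon_1}f(w)$, and consequently $\dist(0,\partial_{\epsilon_1}f(x^R))\leq\dist(0,\partial_{\sigma}f(x^R))$. Together with the bound \eqref{bound} of Theorem \ref{SGDF}, it therefore suffices to choose $K$ large enough that both
\begin{alignat}{6}
\theta\sqrt{d}K^{-\beta}\leq\epsilon_1\qquad\text{and}\qquad K^{\frac{\beta-1}{2}}\sqrt{2\left(\frac{L_0}{\theta}\Delta+L_0^2\sqrt{d}K^{-\beta}+Q\right)}\leq\epsilon_2.\nonumber
\end{alignat}

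Next I would solve these two inequalities for $K$. The first is equivalent to $K\geq(\theta\sqrt{d}/\epsilon_1)^{1/\beta}$. For the second, since $K^{-\beta}\leq1$ for every $K\geq1$, the square root is bounded above by the problem-dependent constant $\bar C:=\sqrt{2\left(L_0\Delta/\theta+L_0^2\sqrt{d}+Q\right)}$, so it is enough to impose $\bar C\,K^{\frac{\beta-1}{2}}\leq\epsilon_2$, i.e.\ $K\geq(\bar C/\epsilon_2)^{2/(1-\beta)}$. Hence taking $K=\left\lceil\max\left((\theta\sqrt{d}/\epsilon_1)^{1/\beta},(\bar C/\epsilon_2)^{2/(1-\beta)}\right)\right\rceil$ meets both conditions, giving $K=O\left(\max\left(\epsilon_1^{-1/\beta},\epsilon_2^{-2/(1-\beta)}\right)\right)$ iterations, each of which performs $S=\lceil K^{1-\beta}\rceil=O(K^{1-\beta})$ stochastic approximate gradient evaluations. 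The total is then
\begin{alignat}{6}
KS=O\left(K^{2-\beta}\right)=O\left(\max\left(\epsilon_1^{-\frac{2-\beta}{\beta}},\epsilon_2^{-\frac{2(2-\beta)}{1-\beta}}\right)\right)=O\left(\max\left(\epsilon_1^{\frac{\beta-2}{\beta}},\epsilon_2^{-2\frac{2-\beta}{1-\beta}}\right)\right),\nonumber
\end{alignat}
where I used that $x\mapsto x^{2-\beta}$ is increasing on $[0,\infty)$ (since $0<\beta<1$), so it commutes with the maximum.

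The computation is essentially routine; the only point requiring a little care is the term $L_0^2\sqrt{d}K^{-\beta}$ appearing inside the square root of \eqref{bound}. Because it is monotonically decreasing in $K$, it can be absorbed into the constant $\bar C$ by using $K\geq1$, after which both the smoothing requirement $\sigma\leq\epsilon_1$ and the accuracy requirement are monotone in $K$, so taking the maximum of the two thresholds is legitimate. I do not anticipate any genuine obstacle beyond this bookkeeping and the elementary manipulation of exponents; the substantive work has already been done in Theorem \ref{SGDF}.
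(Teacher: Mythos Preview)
Your proposal is correct and follows essentially the same approach as the paper: both solve the two constraints $\sigma\leq\epsilon_1$ and the right-hand side of \eqref{bound} $\leq\epsilon_2$ for $K$ (using $K^{-\beta}\leq1$ to absorb the $L_0^2\sqrt{d}K^{-\beta}$ term into a constant), and then multiply by $S=\lceil K^{1-\beta}\rceil$. The only organizational difference is that the paper computes the gradient-call bound separately for the $\epsilon_1$-threshold and the $\epsilon_2$-threshold and then takes the maximum, whereas you take the maximum of the two thresholds for $K$ first and then apply $K\mapsto K^{2-\beta}$, noting monotonicity; the two are equivalent.
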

The optimal choice for $\beta$ is somewhat ambiguous as it depends on the importance placed on 
$\epsilon_1$ and $\epsilon_2$. This can be resolved by attempting to find the optimal $\beta$, with respect to the bound \eqref{bound} provided by Theorem \ref{SGDF}, which minimizes the upper bound on the total number of gradient calls used in PISGD,  
$(K-1)S=(K-1)\lceil K^{1-\beta}\rceil$, assuming $K-1$ iterations are performed. By the Lipschitz continuity of $f(w)$, every point is an $(\epsilon_1,L_0)$-stationary point, so we will assume that $\epsilon_2<L_0$ for the rest of this subsection. In addition, the parameter $\theta$ in Theorem \ref{SGDF} is included to allow the theorem to be applicable for any step size $\eta>0$, but it is redundant for the convergence of the algorithm, so for simplicity we fix $\theta=1$ for the rest of this subsection. 
\begin{corollary}	
\label{ssdcomp2}
Assume $\epsilon_2<L_0$, $\theta=1$, and let 
\begin{alignat}{6}
K^*&=&\max\left(\left\lfloor \frac{2}{\epsilon^2_2}\left(
L_0\Delta+Q+\sqrt{d}L_0^2\right)+1\right\rfloor,\left\lceil \frac{2\sqrt{d}}{\epsilon^2_2}\left(
\frac{L_0\Delta+Q}{\epsilon_1}+L_0^2\right)\right\rceil\right)\nonumber
\end{alignat}
and  
\begin{alignat}{6}  
\beta^*=\frac{\log(K^*\epsilon^2_2-2\sqrt{d}L_0^2)-\log(2(L_0\Delta+Q))}{\log(K^*)},\nonumber
\end{alignat}
then 
\begin{enumerate}
	\item an expected $(\epsilon_1,\epsilon_2)$-stationary point \eqref{eps} can be 
	computed with 	
	$O\left(\frac{1}{\epsilon_1\epsilon^4_2}\right)$ gradient calls, 
	\item $(K^*,\beta^*)$ has the same gradient call complexity as a minimizer of the number of gradient calls required to find an expected 
	$(\epsilon_1,\epsilon_2)$-stationary point using inequality \eqref{bound} of Theorem \ref{SGDF}, and
	\item $(K^*,\beta^*)$ minimizes the number of iterations 
	required to find an expected $(\epsilon_1,\epsilon_2)$-stationary point using inequality \eqref{bound} of Theorem \ref{SGDF}.
\end{enumerate}
\end{corollary}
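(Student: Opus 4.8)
The plan is to exploit the fact that $\beta^*$ is defined precisely so that, exponentiating its defining equation, $(K^*)^{\beta^*}=\frac{K^*\epsilon_2^2-2\sqrt{d}L_0^2}{2(L_0\Delta+Q)}$, equivalently $(K^*)^{-\beta^*}=\frac{2(L_0\Delta+Q)}{K^*\epsilon_2^2-2\sqrt{d}L_0^2}$. First I would check that $(K^*,\beta^*)$ is a legitimate parameter choice, namely that $\beta^*\in(0,1)$ and that Theorem \ref{SGDF} with $\theta=1$ then guarantees an $(\epsilon_1,\epsilon_2)$-stationary point in expectation. Here $\beta^*>0$ follows from the first term in the $\max$ defining $K^*$, which forces $K^*\epsilon_2^2-2\sqrt{d}L_0^2>2(L_0\Delta+Q)$, so $(K^*)^{\beta^*}>1$; and $\beta^*<1$ is automatic since $Q\geq L_0^2$ (variance is nonnegative) and $\epsilon_2<L_0$ give $2(L_0\Delta+Q)\geq 2L_0^2>\epsilon_2^2$, so $K^*(\epsilon_2^2-2(L_0\Delta+Q))<0<2\sqrt{d}L_0^2$. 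For feasibility, substituting $\sigma=\sqrt{d}(K^*)^{-\beta^*}$ and the value of $(K^*)^{-\beta^*}$ into the right-hand side of \eqref{bound} and simplifying yields exactly $\epsilon_2$; and $(K^*)^{\beta^*}\geq\sqrt{d}/\epsilon_1$, hence $\sigma\leq\epsilon_1$, follows from $K^*$ being at least the second term in its definition.

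For the first claim, with $S=\lceil(K^*)^{1-\beta^*}\rceil$ and $(K^*)^{1-\beta^*}=K^*/(K^*)^{\beta^*}=\frac{2K^*(L_0\Delta+Q)}{K^*\epsilon_2^2-2\sqrt{d}L_0^2}$, the total number of gradient calls is at most $(K^*-1)S\leq K^*\big((K^*)^{1-\beta^*}+1\big)=O\!\left(\frac{(K^*)^2(L_0\Delta+Q)}{K^*\epsilon_2^2-2\sqrt{d}L_0^2}\right)$, where the dominant term follows since $\epsilon_2^2<L_0^2\leq Q\leq L_0\Delta+Q$. Because $K^*\geq\frac{2\sqrt{d}}{\epsilon_2^2}\big(\frac{L_0\Delta+Q}{\epsilon_1}+L_0^2\big)$, the denominator satisfies $K^*\epsilon_2^2-2\sqrt{d}L_0^2\geq\frac{2\sqrt{d}(L_0\Delta+Q)}{\epsilon_1}$, so the count is $O\!\left(\frac{(K^*)^2\epsilon_1}{\sqrt{d}}\right)$. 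Finally, for $\epsilon_1$ small the second term dominates the first in the definition of $K^*$, giving $K^*=O\!\left(\frac{\sqrt{d}(L_0\Delta+Q)}{\epsilon_1\epsilon_2^2}\right)$, and substituting yields $O\!\left(\frac{1}{\epsilon_1\epsilon_2^4}\right)$.

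For the second and third claims I would show $K^*$ is the smallest integer $K$ for which some $\beta\in(0,1)$ makes $(K,\beta)$ feasible via \eqref{bound}, where feasible means $\sigma=\sqrt{d}K^{-\beta}\leq\epsilon_1$ and the right-hand side of \eqref{bound} is at most $\epsilon_2$. For any feasible $(K,\beta)$, using $\beta>0$ so $K^{\beta-1}>K^{-1}$, the constraint $K^{\beta-1}\cdot 2(L_0\Delta+L_0^2\sqrt{d}K^{-\beta}+Q)\leq\epsilon_2^2$ gives $\frac{2(L_0\Delta+Q+\sqrt{d}L_0^2)}{K}<\epsilon_2^2$, hence $K$ is at least the first term of the $\max$ (matching the floor-plus-one form); rewriting the same constraint as $2(L_0\Delta+Q)K^\beta+2\sqrt{d}L_0^2\leq K\epsilon_2^2$ and using $K^\beta\geq\sqrt{d}/\epsilon_1$ gives $K\geq\frac{2\sqrt{d}}{\epsilon_2^2}\big(\frac{L_0\Delta+Q}{\epsilon_1}+L_0^2\big)$, at least the second term. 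Thus $K\geq K^*$, and since $(K^*,\beta^*)$ is feasible, $K^*$ minimizes the iteration count $K-1$, which is the third claim. For the second claim, any feasible $(K,\beta)$ uses at least $(K-1)K^{1-\beta}\geq(K^*-1)\cdot\frac{2(L_0\Delta+Q)}{\epsilon_2^2}=\Omega\!\left(\frac{1}{\epsilon_1\epsilon_2^4}\right)$ gradient calls, where $K^{1-\beta}\geq\frac{2(L_0\Delta+Q)}{\epsilon_2^2}$ again comes from dropping the nonnegative $L_0^2\sqrt{d}K^{-\beta}$ term; combined with the upper bound from the first claim, the minimum gradient-call count is $\Theta\!\left(\frac{1}{\epsilon_1\epsilon_2^4}\right)$, the same order as that attained at $(K^*,\beta^*)$.

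The main obstacle will be the bookkeeping in the last paragraph: reconciling the strict inequalities $K>\cdots$ with the floor and ceiling appearing in $K^*$ so that the two derived lower bounds coincide exactly with the two terms of the $\max$, and covering the boundary conditions needed for $\beta^*\in(0,1)$ and for $\sigma\leq\epsilon_1$ (including the degenerate regime $\epsilon_1\geq\sqrt{d}$, where the first constraint becomes vacuous and the first term of the $\max$ dominates anyway). The algebraic identity that the right-hand side of \eqref{bound} evaluates to exactly $\epsilon_2$ at $(K^*,\beta^*)$ is routine but is the linchpin on which both the feasibility of $(K^*,\beta^*)$ and the tightness of all three claims rest.
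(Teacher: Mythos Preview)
Your proposal is correct and the overall architecture---check $\beta^*\in(0,1)$, verify feasibility of $(K^*,\beta^*)$, show $K^*$ is the minimal feasible $K$, and then match the gradient-call count up to constants---matches the paper's. The feasibility and minimality arguments (your statement 3) are essentially identical to the paper's, including the key identity $(K^*)^{\beta^*}=\frac{K^*\epsilon_2^2-2\sqrt d L_0^2}{2(L_0\Delta+Q)}$ and the use of $Q\ge L_0^2$ with $\epsilon_2<L_0$ for $\beta^*<1$.

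The genuine difference is in how statement 2 is obtained. The paper passes to the continuous relaxation with $S\in\RR$, writes the objective as $\frac{aK(K-1)}{\epsilon_2^2K-b}$ with $a=2(L_0\Delta+Q)$ and $b=2\sqrt dL_0^2$, differentiates, and shows this is increasing in $K$ for $K\ge K^*$; hence $(K^*,\beta^*)$ is \emph{exactly} optimal for the relaxation, and the integer optimum is then sandwiched between $(K^*-1)\frac{aK^*}{\epsilon_2^2K^*-b}$ and $(K^*-1)\lceil\frac{aK^*}{\epsilon_2^2K^*-b}\rceil$. Your route is more elementary: you read off $K^{1-\beta}\ge \frac{2(L_0\Delta+Q)}{\epsilon_2^2}$ directly from the $\epsilon_2$-constraint (dropping the $L_0^2\sqrt d K^{-\beta}$ term) and combine with $K\ge K^*$ to get the $\Omega(\epsilon_1^{-1}\epsilon_2^{-4})$ lower bound without any calculus. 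This is shorter and avoids the derivative check and the monotonicity threshold $K\ge 2b/\epsilon_2^2$; what you give up is the stronger intermediate fact that $(K^*,\beta^*)$ actually solves the relaxed problem, but the corollary only claims matching complexity, so your argument suffices. Your handling of the $\epsilon_1\ge\sqrt d$ regime is also a bit cleaner than the paper's case split, since your two lower bounds on $K$ hold uniformly.
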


Motivated by \cite[Section 2.2]{ghadimi2013} we present the computational complexity for finding an $(\epsilon_1,\epsilon_2)$-stationary point with probability $1-\gamma$ for any $\gamma\in (0,1)$.

\begin{corollary}
\label{ssdcomp3}
Let $c\in(0,1)$ and $\phi>1$ be arbitrary constants. For any $\gamma\in(0,1)$ and $\epsilon_2<L_0$, let PISGD be run $\R:=\lceil-\ln(c\gamma)\rceil$ times using the parameter settings of Theorem \ref{SGDF} with $\theta=1$, 
\begin{alignat}{6}
K&=&\max\left(\left\lfloor \frac{2}{(\epsilon'_2)^2}\left(
L_0\Delta+Q+\sqrt{d}L_0^2\right)+1\right\rfloor,\left\lceil \frac{2\sqrt{d}}{(\epsilon'_2)^2}\left(
\frac{L_0\Delta+Q}{\epsilon_1}+L_0^2\right)\right\rceil\right),\nonumber
\end{alignat}
and  
\begin{alignat}{6}  
\beta=\frac{\log(K(\epsilon'_2)^2-2\sqrt{d}L_0^2)-\log(2(L_0\Delta+Q))}{\log(K)},\nonumber
\end{alignat}
where $\epsilon_2'=\sqrt{\frac{\epsilon^2_2-6\psi\frac{Q}{T}}{4e}}$, $\psi=\frac{\lceil-\ln(c\gamma)\rceil}{(1-c)\gamma}$, and $T=\lceil6\phi\psi\frac{Q}{\epsilon^2_2}\rceil$, outputting candidate solutions $\overline{X}:=\{\bar{x}^1,...,\bar{x}^{\R}\}$. With $T$ samples $\{(z_1,\xi_1),...,(z_{T},\xi_{T})\}$, where $z_i\sim U(B(\sigma))$ and $\xi_i\sim P_{\xi}$ for $i=1,...,T$, let $\bar{x}^*\in\overline{X}$ be chosen such that for $\overline{\nabla} F_{T}(x):=\frac{1}{T}\sum_{t=1}^{T}\widetilde{\nabla}F(x+z_t,\xi_t)$,
\begin{alignat}{6}
\bar{x}^*\in\argmin\limits_{x\in\overline{X}}||\overline{\nabla} F_{T}(x)||_2.\nonumber
\end{alignat} 
It follows that 
\begin{enumerate}
	\item $\bar{x}^*$ is an $(\epsilon_1,\epsilon_2)$-stationary point with a probability of at least $1-\gamma$, and 
	\item the described method requires $\tilde{O}\left(\frac{1}{\epsilon_1\epsilon^4_2}+\frac{1}{\gamma\epsilon^2_2}\right)$ gradient calls.
\end{enumerate}
\end{corollary}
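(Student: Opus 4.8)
The plan is a two–phase, boosting–to–high–probability argument in the spirit of \cite[Section~2.2]{ghadimi2013}, where Phase~1 is the collection of $\R$ independent PISGD runs and Phase~2 is the post–optimization selection that uses the $T$ fresh samples. First I would dispose of a trivial case: by Lemma \ref{gradbound} every element of $\partial_{\epsilon_1}f(x)$ is a convex combination of gradients of norm at most $L_0$, so $\dist(0,\partial_{\epsilon_1}f(x))\le L_0$ for all $x$ and every point is an $(\epsilon_1,L_0)$–stationary point; hence I may assume $\epsilon_2<L_0$ (so also $\epsilon_2'<L_0$), since otherwise the claims hold trivially. Write $g_\sigma(x):=\EE_{z\sim U(B(\sigma))}[\widetilde{\nabla}f(x+z)]$. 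Exactly as in the derivation of \eqref{tuffeq}, Property \ref{expg} with Fubini's theorem gives $\EE[\overline{\nabla}F_T(x)\mid x]=g_\sigma(x)$ for every $x$ and every $T$, and, as argued after \eqref{flhs}, $g_\sigma(x)\in\partial_{\sigma}f(x)\subseteq\partial_{\epsilon_1}f(x)$ whenever $\sigma\le\epsilon_1$, so that $\dist(0,\partial_{\epsilon_1}f(x))\le\|g_\sigma(x)\|_2$. Finally, the $(K,\beta)$ in the statement are precisely the pair of Corollary \ref{ssdcomp2} with $\epsilon_2$ replaced by $\epsilon_2'$; by that corollary $\sigma\le\epsilon_1$, and the chain \eqref{hpineq}--\eqref{frhs} in the proof of Theorem \ref{SGDF} actually shows the stronger bound $\EE[\|g_\sigma(\bar{x}^{i})\|_2^2]<(\epsilon_2')^2$ for each run $i$.

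For Phase~1, Markov's inequality gives $\PP[\|g_\sigma(\bar{x}^{i})\|_2^2\ge e(\epsilon_2')^2]<1/e$ for each $i$, and independence of the runs yields $\PP[\min_i\|g_\sigma(\bar{x}^{i})\|_2^2\ge e(\epsilon_2')^2]<e^{-\R}\le e^{\ln(c\gamma)}=c\gamma$ by the choice $\R=\lceil-\ln(c\gamma)\rceil$. So with probability exceeding $1-c\gamma$ there is an index $i^*$ with $\|g_\sigma(\bar{x}^{i^*})\|_2<\sqrt{e}\,\epsilon_2'$. For Phase~2 I would invoke a conditional version of Lemma \ref{stobound} (with $S=T$), namely $\EE[\|\overline{\nabla}F_T(\bar{x}^{i})-g_\sigma(\bar{x}^{i})\|_2^2\mid\overline{X}]\le Q/T$ for each $i$; Markov with threshold $\psi Q/T$ and a union bound over the $\R$ candidates then give $\PP[\max_i\|\overline{\nabla}F_T(\bar{x}^{i})-g_\sigma(\bar{x}^{i})\|_2^2>\psi Q/T]\le\R/\psi=\tfrac{\R}{\R+1}(1-c)\gamma<(1-c)\gamma$. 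Intersecting the Phase~1 and Phase~2 events, both hold simultaneously with probability at least $1-\gamma$.

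On this event, writing $\bar{x}^*\in\argmin_{x\in\overline{X}}\|\overline{\nabla}F_T(x)\|_2$ and applying the triangle inequality twice,
\begin{align*}
\dist(0,\partial_{\epsilon_1}f(\bar{x}^*))&\le\|g_\sigma(\bar{x}^*)\|_2\le\|\overline{\nabla}F_T(\bar{x}^*)\|_2+\sqrt{\psi Q/T}\le\|\overline{\nabla}F_T(\bar{x}^{i^*})\|_2+\sqrt{\psi Q/T}\\
&\le\|g_\sigma(\bar{x}^{i^*})\|_2+2\sqrt{\psi Q/T}<\sqrt{e}\,\epsilon_2'+2\sqrt{\psi Q/T}.
\end{align*}
Substituting $\epsilon_2'=\sqrt{(\epsilon_2^2-6u)/(4e)}$ with $u:=\psi Q/T$ turns the right–hand side into $\tfrac12\sqrt{\epsilon_2^2-6u}+2\sqrt{u}$, and I would verify the one–variable inequality $\tfrac12\sqrt{\epsilon_2^2-6u}+2\sqrt{u}\le\epsilon_2$ for all $0\le u\le\epsilon_2^2/6$ (the left side is maximized at $u=4\epsilon_2^2/33$, with value $\tfrac{11}{2\sqrt{33}}\,\epsilon_2<\epsilon_2$). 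The choice $T=\lceil6\phi\psi Q/\epsilon_2^2\rceil$ with $\phi>1$ forces $u\le\epsilon_2^2/(6\phi)<\epsilon_2^2/6$, which also makes $\epsilon_2'$ real and strictly positive, so $\dist(0,\partial_{\epsilon_1}f(\bar{x}^*))<\epsilon_2$ on an event of probability at least $1-\gamma$, proving claim~1.

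For claim~2, note $6\psi Q/T\le\epsilon_2^2/\phi$ holds independently of $\gamma$, so $\epsilon_2'=\Theta(\epsilon_2)$ with constants depending only on $\phi$; hence by Corollary \ref{ssdcomp2}(1) each Phase~1 run uses $O(1/(\epsilon_1(\epsilon_2')^4))=O(1/(\epsilon_1\epsilon_2^4))$ gradient calls, and Phase~1 uses $\R$ times this, i.e.\ $\tilde{O}(1/(\epsilon_1\epsilon_2^4))$ since $\R=O(\log(1/\gamma))$. Phase~2 evaluates $\widetilde{\nabla}F$ at $T$ points for each of the $\R$ candidates, i.e.\ $\R T$ gradient calls; since $\psi=O(\log(1/\gamma)/\gamma)$ and $T=O(\psi Q/\epsilon_2^2)$, this is $\tilde{O}(1/(\gamma\epsilon_2^2))$, and adding the two phases gives the stated $\tilde{O}(1/(\epsilon_1\epsilon_2^4)+1/(\gamma\epsilon_2^2))$. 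The step I expect to be the main obstacle is the conditional form of Lemma \ref{stobound}: the lemma as stated controls $\EE[\|\overline{\nabla}F-\EE[\overline{\nabla}F\mid x]\|_2^2]$ in unconditional expectation, whereas the union bound in Phase~2 needs $\EE[\|\overline{\nabla}F-\EE[\overline{\nabla}F\mid x]\|_2^2\mid x]\le Q/S$ to hold almost surely, pointwise in $x$. This should follow from the same independent–averaging computation underlying Lemma \ref{stobound}, the summands being conditionally i.i.d.\ given $x$ with conditional second moment at most $Q$ by Lemma \ref{gradbound}; the remaining pieces — the one–variable inequality and the bookkeeping keeping $\epsilon_2'=\Theta(\epsilon_2)$ — are routine.
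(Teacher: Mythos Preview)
Your argument is correct and follows the same two--phase boosting scheme as the paper, but you decompose the norm differently. The paper quotes the squared--norm three--term inequality \cite[Eq.~2.28]{ghadimi2013},
\[
\|g_\sigma(\bar{x}^*)\|_2^2\le 4\min_i\|g_\sigma(\bar{x}^i)\|_2^2+4\max_i\|\overline{\nabla}F_T(\bar{x}^i)-g_\sigma(\bar{x}^i)\|_2^2+2\|\overline{\nabla}F_T(\bar{x}^*)-g_\sigma(\bar{x}^*)\|_2^2,
\]
and bounds each term separately (the last two together contributing $(\R+1)/\psi$ to the failure probability), after which the parameter choice for $\epsilon_2'$ makes $4e(\epsilon_2')^2+6\psi Q/T=\epsilon_2^2$ exactly. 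You instead apply the ordinary triangle inequality on norms and the $\argmin$ property to get $\|g_\sigma(\bar{x}^*)\|_2<\sqrt{e}\,\epsilon_2'+2\sqrt{\psi Q/T}$, and then verify the one--variable inequality by calculus. Your route is more self--contained (no need to cite the Ghadimi--Lan inequality) and uses a union bound over only $\R$ rather than $\R+1$ terms, at the price of a small extra computation; the paper's route explains more transparently why the constant $6$ and the factor $4e$ appear in the definition of $\epsilon_2'$.

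One correction to your self--assessment: the ``conditional form of Lemma~\ref{stobound}'' that you flag as the main obstacle is not needed. For the Phase~2 union bound you only use Markov's inequality on each term, which requires the \emph{unconditional} bound $\EE\big[\|\overline{\nabla}F_T(\bar{x}^i)-g_\sigma(\bar{x}^i)\|_2^2\big]\le Q/T$. Lemma~\ref{stobound} is stated precisely for a random $x$ independent of the fresh $(z_l,\xi_l)$'s and already delivers this unconditional bound; the paper invokes it the same way. So the step you worried about is immediate, and the rest of your bookkeeping (feasibility of $\epsilon_2'$, the $\Theta(\epsilon_2)$ relation via $6\psi Q/T\le\epsilon_2^2/\phi$, and the gradient--call count) is correct as written.
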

The proofs of the three corollaries are contained in the appendix.\\
 
\noindent\textbf{Comparison with the computational complexity in \cite{zhang2020}:}\\
For the deterministic setting,  Interpolated Normalized Gradient Descent (INGD) is developed which has a gradient call complexity of $\tilde{O}(\frac{1}{\epsilon_1\epsilon_2^3})$ to achieve an $(\epsilon_1,\epsilon_2)$-stationary point with arbitrarily high probability.
For the stochastic setting, Stochastic INDG finds an expected $(\epsilon_1,\epsilon_2)$-stationary point with $\tilde{O}(\frac{1}{\epsilon_1\epsilon_2^4})$ stochastic subgradient calls. In particular, their algorithm finds an 
expected $(\epsilon_1,\frac{\epsilon_2}{3})$-stationary point. Running the algorithm $-\log(\gamma)$ times, one of the solutions will be an $(\epsilon_1,\epsilon_2)$-stationary point with a probability of at least $1-\gamma$. Our computational complexities are similar, with the extra $\tilde{O}\left(\frac{1}{\gamma\epsilon^2_2}\right)$ term in our high probability convergence result coming from returning an $\bar{x}^*\in\overline{X}$ instead of simply $\overline{X}$. We also point out that we have omitted the problem dimension $d$ in our computational complexity, where the convergence result of \cite{zhang2020} is {\it dimension-free}, which is a sought-after property when studying the computational complexity of algorithms, see for example \cite{carmon2019}. 

\subsection{Convergence to a Clarke Stationary Point Almost Surely}
\label{asympt}

Inspired by the discussion of an algorithm with asymptotic convergence to an $\epsilon$-stationary point \eqref{epsta} with high probability on page 4 of \cite{zhang2020}, in the following corollary, we prove that if PISDG is run for a sequence of increasing iteration sizes, any accumulation point of the solutions is a Clarke stationary point almost surely.

\begin{corollary}	
	\label{asymptotic}
	Let $\{K_1,K_2,...\}$ be a strictly increasing sequence of positive integers. Let $\beta\in (0,1)$ and a finite $\theta>0$ be fixed, with $(S_i,\sigma_i,\eta_i)$ equal to $(S,\sigma,\eta)$ as described in Theorem \ref{SGDF} given $(\beta,\theta,K=K_i)$. Assume for $i=1,2,...,$ PISGD is run with the indexed parameters, outputting solutions $\{x_1, x_2,...\}$, i.e. $x_i=x^R$ for the $i^{th}$ instance of running PISGD. Any accumulation point $x^*$ of the solutions $\{x_1, x_2,...\}$ is a Clarke stationary point almost surely.	
\end{corollary}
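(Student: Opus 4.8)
The plan is to build on the non-asymptotic bound of Theorem \ref{SGDF} together with a Borel--Cantelli argument. First I would record what Theorem \ref{SGDF} gives for each instance $i$: writing $B_i:=K_i^{\frac{\beta-1}{2}}\sqrt{2\left(\frac{L_0}{\theta}\Delta+L_0^2\sqrt{d}K_i^{-\beta}+Q\right)}$, we have $\EE[\dist(0,\partial_{\sigma_i}f(x_i))]< B_i$, and since $K_i\to\infty$ (the sequence is strictly increasing positive integers) and $\beta\in(0,1)$, we have $\sigma_i=\theta\sqrt{d}K_i^{-\beta}\to 0$ and $B_i\to 0$. By Markov's inequality, for any $\epsilon>0$, $\PP(\dist(0,\partial_{\sigma_i}f(x_i))\geq\epsilon)\leq B_i/\epsilon$. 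To get an almost-sure statement I would pass to a subsequence: since $B_i\to 0$, I can pick indices $i_1<i_2<\cdots$ such that $\sum_m B_{i_m}<\infty$ (e.g. so that $B_{i_m}\leq 2^{-m}$), and then $\sum_m \PP(\dist(0,\partial_{\sigma_{i_m}}f(x_{i_m}))\geq\epsilon)<\infty$ for each fixed $\epsilon$; applying this along a countable sequence $\epsilon=1/q$, $q\in\NN$, and invoking Borel--Cantelli, I conclude that almost surely $\dist(0,\partial_{\sigma_{i_m}}f(x_{i_m}))\to 0$ as $m\to\infty$. (Since the solutions $x_i$ of distinct PISGD runs are produced from independent randomness, or at worst can be taken so, there is no measurability subtlety here; and $\dist(0,\partial_\epsilon f(\cdot))$ is Borel measurable by Property \ref{SetValMeas}.)

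Next I would connect this to accumulation points. Fix an outcome in the almost-sure event above, and suppose $x^*$ is an accumulation point of $\{x_1,x_2,\dots\}$. The delicate point is that a given accumulation point need not be approached along the chosen subsequence $\{i_m\}$. To handle this I would instead argue directly: let $x^*$ be any accumulation point of the full sequence, so there is a further subsequence $x_{j_n}\to x^*$; I want to show $0\in\partial f(x^*)$. Here I would take a diagonal/refinement approach — because $B_i\to 0$ along the \emph{whole} sequence, for the subsequence $\{j_n\}$ I can again extract a sub-subsequence $\{j_{n_r}\}$ with $\sum_r B_{j_{n_r}}<\infty$, so that (Borel--Cantelli, along the countable family $\epsilon=1/q$) almost surely $\dist(0,\partial_{\sigma_{j_{n_r}}}f(x_{j_{n_r}}))\to 0$. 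Since the choice of subsequence from $\{j_n\}$ can be made in a way depending only on the deterministic quantities $B_i$, this event is the same almost-sure event (or an intersection of countably many such) and no new null set issues arise. Thus along $\{j_{n_r}\}$ we simultaneously have $x_{j_{n_r}}\to x^*$, $\sigma_{j_{n_r}}\to 0$, and $\dist(0,\partial_{\sigma_{j_{n_r}}}f(x_{j_{n_r}}))\to 0$.

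The final step is a closedness/upper-semicontinuity argument for the Clarke $\epsilon$-subdifferential. For each $r$ pick $g_r\in\partial_{\sigma_{j_{n_r}}}f(x_{j_{n_r}})$ with $\|g_r\|_2=\dist(0,\partial_{\sigma_{j_{n_r}}}f(x_{j_{n_r}}))\to 0$, so $g_r\to 0$. By definition \eqref{clesub}, $g_r$ is a convex combination of Clarke subgradients at points within distance $\sigma_{j_{n_r}}$ of $x_{j_{n_r}}$, hence (by Carath\'eodory's theorem for convex hulls in $\RR^d$) within distance $\sigma_{j_{n_r}}$ of $x_{j_{n_r}}$ we may write $g_r=\sum_{t=0}^{d}\lambda_{r,t}h_{r,t}$ with $h_{r,t}\in\partial f(y_{r,t})$, $\|y_{r,t}-x_{j_{n_r}}\|_2\leq\sigma_{j_{n_r}}$, $\lambda_{r,t}\geq 0$, $\sum_t\lambda_{r,t}=1$. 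Since $\|y_{r,t}-x^*\|_2\leq\|y_{r,t}-x_{j_{n_r}}\|_2+\|x_{j_{n_r}}-x^*\|_2\to 0$, each $y_{r,t}\to x^*$; the $h_{r,t}$ are bounded by $L_0$ (Lemma \ref{gradbound} / Lipschitz continuity) and the $\lambda_{r,t}$ lie in a compact simplex, so after one more extraction $\lambda_{r,t}\to\lambda_t^*$ and $h_{r,t}\to h_t^*$. By the closed-graph (outer semicontinuity) property of the Clarke subdifferential of a locally Lipschitz function \cite[Proposition 2.1.5]{clarke1990}, $h_t^*\in\partial f(x^*)$, so $\sum_t\lambda_t^* h_t^*\in\partial f(x^*)$ by convexity of $\partial f(x^*)$; but $\sum_t\lambda_t^* h_t^*=\lim_r g_r=0$, giving $0\in\partial f(x^*)$. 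I expect the main obstacle to be the bookkeeping in the second paragraph: ensuring the Borel--Cantelli event used for an arbitrary accumulation-point subsequence coincides with (a countable intersection of) events that have probability one, so that the conclusion genuinely holds simultaneously for \emph{all} accumulation points on a single almost-sure event — this is handled by making all subsequence choices depend only on the deterministic sequence $\{B_i\}$ and intersecting over the countable family $\epsilon\in\{1/q\}$.
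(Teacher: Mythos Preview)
Your third paragraph --- the Carath\'eodory/closed-graph argument deducing $0\in\partial f(x^*)$ from the three properties $x_{j_r}\to x^*$, $\sigma_{j_r}\to 0$, and $\dist(0,\partial_{\sigma_{j_r}}f(x_{j_r}))\to 0$ --- is correct and clean. The gap is in the second paragraph. The subsequence $\{j_n\}$ along which $x_{j_n}\to x^*$ is \emph{random}: it depends on the realisation $\{x_i(\omega)\}$ and on which accumulation point $x^*(\omega)$ you are tracking. Any sub-subsequence $\{j_{n_r}\}$ you extract from it is therefore also random, and Borel--Cantelli does not apply to events $\{\dist(0,\partial_{\sigma_{j_{n_r}}}f(x_{j_{n_r}}))\geq 1/q\}$ whose very indices $j_{n_r}$ are random variables --- the series $\sum_r \PP(\cdots)$ is not even well-defined. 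Your proposed remedy, that the extraction rule depends only on the deterministic sequence $\{B_i\}$, does not help: the rule is being applied to the random input $\{j_n\}$. If instead you fix one deterministic subsequence $\{i_m\}$ with $\sum_m B_{i_m}<\infty$ and obtain your almost-sure event along it, you face a different obstruction: an accumulation point of the full sequence $\{x_i\}$ need not be an accumulation point of $\{x_{i_m}\}$, so the inputs to your paragraph-three argument are unavailable.

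The paper's route is different in kind. Rather than controlling $\dist(0,\partial_{\sigma_i}f(x_i))$ along a subsequence, it transfers the bound to the accumulation point itself via the nesting $\partial_{\sigma/2}f(x_j)\subseteq\partial_\sigma f(x^*)$ (valid whenever $\|x_j-x^*\|\leq\sigma/2$), obtaining $\EE[\dist(0,\partial_{\sigma_i}f(x^*))]\leq B_i/i$ for each $i$. Markov then gives $\PP(\dist(0,\partial_{\sigma_i}f(x^*))\geq 1/i)\leq B_i$, and a continuity-of-measure argument along the increasing family $V_i=\{\dist(0,\partial_{\sigma_i}f(x^*))\geq 1/i\}$ --- shown to increase to $\{\dist(0,\partial f(x^*))>0\}$ via outer semicontinuity of $\partial f$ --- yields $\PP(\dist(0,\partial f(x^*))>0)\leq\lim_i B_i=0$. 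This avoids Borel--Cantelli and any summability requirement on the $B_i$.
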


\begin{proof}
Assuming there exists an accumulation point $x^*$ of $\{x_i\}$, let $\{x_i\}$ be redefined as a subsequence of $\{x_i\}$ such that $\lim\limits_{i\rightarrow \infty} x_{i}=x^*$. 
Let
\begin{alignat}{6}
B_i:=K_i^{\frac{\beta-1}{2}}\sqrt{2\left(\frac{L_0}{\theta}\Delta+L_0^2\sqrt{d} K_i^{-\beta}+Q\right)},\nonumber
\end{alignat}
which is the right-hand side of \eqref{bound} with $K=K_i$. As $i\rightarrow \infty$, $\sigma_i\rightarrow 0$ and $B_i\rightarrow 0$. For any $i\in \NN$ there exists an $I\in \NN$ such that for all $j>I$, $x_j$ is an expected  
$\left(\frac{\sigma_i}{2},\frac{B_i}{i}\right)$-stationary point with 
$||x_{j}-x^*||_2\leq \frac{\sigma_i}{2}$. For such solutions $x_j$,
\begin{alignat}{6}  
\{\partial f(x): x\in x_j+B(\sigma_i/2)\}\subseteq\{\partial f(x): x\in x^*+B(\sigma_i)\},\nonumber
\end{alignat}
hence $\partial_{\frac{\sigma_i}{2}} f(x_{j})\subseteq \partial_{\sigma_i} f(x^*)$ and given that $x_j$ is an expected  
$\left(\frac{\sigma_i}{2},\frac{B_i}{i}\right)$-stationary point,
\begin{alignat}{6}
\EE[\dist(0,\partial_{\sigma_i}f(x^*))]&&\leq&	\frac{B_i}{i}.\nonumber
\end{alignat}
Using Markov's inequality,
\begin{alignat}{6}
\PP[\dist(0,\partial_{\sigma_i}f(x^*))\geq\frac{1}{i}]\leq B_i.\nonumber
\end{alignat}
Given that $\dist(0,\partial_{\sigma_i}f(x^*))\leq \dist(0,\partial_{\sigma_{i+1}}f(x^*))$ and $\frac{1}{i}>\frac{1}{i+1}$, the sets 
\begin{alignat}{6}
V_i:=\{x^*\in\RR^d:\dist(0,\partial_{\sigma_i}f(x^*))\geq \frac{1}{i}\}\nonumber
\end{alignat}
are monotonically increasing, $V_i\subseteq V_{i+1}$, and the limit $\lim\limits_{i\rightarrow \infty}V_i=\bigcup\limits_{i\geq 1}V_i$ exists \cite[Exercise 2.F.]{bartle1995}. Since the functions $\dist(0,\partial_{\sigma_i}f(x^*))$ are Borel measurable from Property \ref{SetValMeas}, each $V_i$ is Borel measurable, as is 
$\lim\limits_{i\rightarrow \infty}V_i$ as a countable union of Borel measurable sets.\\ 

The next step is to prove that $\lim\limits_{i\rightarrow \infty}V_i=\{x^*\in\RR^d:\dist(0,\partial f(x^*))>0\}$. For an $x\in \bigcup\limits_{i\geq 1}V_i$ there exists an $i\geq 1$ such that $\dist(0,\partial f(x))\geq \dist(0,\partial_{\sigma_i}f(x))\geq \frac{1}{i}>0$, hence $x\in \{x^*\in\RR^d:\dist(0,\partial f(x^*))>0\}$.\\ 

For an $x\in \{x^*\in\RR^d:\dist(0,\partial f(x^*))>0\}$, let $\omega=\dist(0,\partial f(x))$. The Clarke subdifferential is an upper semicontinuous set valued mapping \cite[Proposition 2.1.5 (d)]{clarke1990}, which means that for all $\omega_1>0$, there exists an $\omega_2>0$ such that $\partial f(\hat{x})\subset \partial f(x)+B(\omega_1)$ for all $\hat{x}\in x+B(\omega_2)$,\footnote{For our setting, using closed balls is equivalent to using open balls in the definition.} hence $\partial_{\omega_2} f(x)\subseteq \text{co}\{\partial f(x)+B(\omega_1)\}=\partial f(x)+B(\omega_1)$, and 
\begin{alignat}{6}
\dist(0,\partial_{\omega_2} f(x))\geq \dist(0,\partial f(x)+B(\omega_1)).\label{omega2dist}
\end{alignat}
The function $\dist(0,\partial f(x)+B(\omega_1))$ can be bounded below as 
\begin{alignat}{6}
\dist(0,\partial f(x)+B(\omega_1))&=\min_{\substack{z\in\partial f(x)\\y\in B(\omega_1)}}||z+y||_2\nonumber\\
&\geq \min_{\substack{z\in\partial f(x)\\y\in B(\omega_1)}}||z||_2-||y||_2\nonumber\\
&=\omega -\omega_1.\label{lowerbound}
\end{alignat}
Choosing $\omega_1=\frac{\omega}{2}$, there then exists an $\omega_2>0$ such that  
$\dist(0,\partial_{\omega_2} f(x))\geq \frac{\omega}{2}$ from \eqref{omega2dist} and \eqref{lowerbound}. A $J\in \NN$ exists such that for all $i\geq J$, $\sigma_i\leq\omega_2$, and setting $I\geq\max\{J,\lceil\frac{2}{\omega}\rceil\}$, $x\in V_i$ for all $i\geq I$, proving that $x\in \bigcup\limits_{i\geq 1}V_i$ and  $\lim\limits_{i\rightarrow \infty}V_i=\{x^*\in\RR^d:\dist(0,\partial f(x^*))>0\}$.\\

It follows that $\PP[\dist(0,\partial f(x^*))=0]=1$ as 
\begin{alignat}{6}
&&\PP[\dist(0,\partial f(x^*))>0]&=\lim\limits_{i\rightarrow\infty}\PP[\dist(0,\partial_{\sigma_i} f(x^*))\geq \frac{1}{i}]\nonumber\\
&&&\leq\lim_{i\rightarrow\infty}B_i=0,\nonumber
\end{alignat}
where the equality holds since $V_i\subseteq V_{i+1}$ \cite[Theorem A.1.1]{shreve2004}.
\end{proof}

\subsection{PISGD and Theorem \ref{SGDF} in Particular Cases}
\label{settings}

In this subsection we look at how the convergence result of Theorem \ref{SGDF} changes for particular forms of $f(w)$.\\

\noindent\textbf{Deterministic $f(w)$:}
In the case where $f(w)$ does not have the structure of \eqref{eq:1} and is simply a deterministic $L_0$-Lipschitz continuous function, there will be no sampling of $\xi$ with the algorithm update rule being
\begin{alignat}{6} 
x^{k+1}=x^k-\frac{\eta}{S}\sum_{l=1}^S\widetilde{\nabla} f(w_l^k).\label{deterup}
\end{alignat} 	
The only change in Theorem \ref{SGDF} is that $Q$ is no longer needed, with $L^2_0$ replacing it in inequality \eqref{bound}:
\begin{alignat}{6}
\EE[\dist(0,\partial_{\sigma}f(x^R))]&&<&	
K^{\frac{\beta-1}{2}}\sqrt{2L_0\left(\frac{\Delta}{\theta}+L_0\sqrt{d} 
K^{-\beta}+L_0\right)}.\nonumber
\end{alignat}
This comes from changing inequality \eqref{Qbound} in the proof of Theorem \ref{SGDF} after replacing  
\begin{alignat}{6}
\frac{1}{S}\EE[\norm{\widetilde{\nabla} F(w_l^k,\xi_l^k)}^2_2]
&&\leq&\frac{Q}{S}\nonumber
\end{alignat}
with
\begin{alignat}{6}
\frac{1}{S}\EE[\norm{\widetilde{\nabla} f(w_l^k)}^2_2]
&&\leq&\frac{L^2_0}{S},\nonumber
\end{alignat}
at inequality \eqref{eq:22} in the proof of Lemma \ref{stobound}, given that $\norm{\widetilde{\nabla} f(w_l^k)}_2\leq L_0$ almost surely by Lemma \ref{gradbound}.\\

\noindent\textbf{Finite-sum $f(w)$:}
For the case where $f(w)$ takes the finite-sum structure of \eqref{eq:0},
in mini-batch SGD, when the required sample size $S\geq n$, it is better to switch to gradient descent. In PISGD, if $S\geq n$, one could use the update rule \eqref{deterup}, but this would result in $n$ times the number of approximate stochastic gradient computations compared to continuing to use 
\begin{alignat}{6}
x^{k+1}=x^k-\frac{\eta}{S}\sum_{l=1}^S\widetilde{\nabla} F(w_l^k,\widehat{\xi}_l^k).\nonumber
\end{alignat}
Using the update \eqref{deterup} would replace $Q$ with $L_0^2$ in the bound of $\epsilon_2$, and it would no longer be required to uniformly sample $\{\widehat{\xi}\}$ each iteration, but the increased number of approximate stochastic gradient computations required is likely to outweigh these benefits.

\section{Application: Feedforward Neural Network}
\label{num}
In this section we consider training a fully connected 
feedforward neural network with one hidden layer using MNIST data, with 
$N=[68,9,3]$ nodes in each layer, respectively. The MNIST training 
dataset consists of image data $v^i$ for $i=1,...,60,000$, of the digits $0,1,...,9$, of dimension $784$ and one-hot encoded labels $y^i$ of dimension $10$. The neural network trained on the digits $[0,1,2]$, which consisted of $n=18624$ samples. PCA was applied to $v$ with 90\% explained variance,\footnote{This modified MNIST dataset is available from the corresponding author on reasonable request.} which reduced the  dimension of each $v^i$ to $p=68$.\\

The decision variables of the model are $x=[W,b]$, where for $l=2,3$, $W^l_{jk}$ is the weight for 
the connection between the $k^{th}$ neuron 
in the $(l-1)^{th}$ layer and the $j^{th}$ neuron in the $l^{th}$ layer, and $b^l_j$ is the bias of 
the $j^{th}$ neuron in the $l^{th}$ layer. The input and output of the activation functions in each 
layer are denoted as $z^l_j$ and $\alpha^l_j$, respectively. ReLU-m activation functions were used 
in the hidden layer, 
\begin{alignat}{6}
\alpha^2_j(z^2_j)&:=&&\min(\max(z^2_j,0),m),\nonumber
\end{alignat} 
with $m>0$, and softmax functions were used in the output 
layer, 
\begin{alignat}{6}
\alpha^3_j(z^3):=\frac{e^{z^3_j}}{\sum_{k=1}^{N_3}e^{z^3_k}},\nonumber
\end{alignat} 
with a cross-entropy loss function, 
\begin{alignat}{6}
\LL(\alpha^3,y^i):=-\sum_{j=1}^{N_3}y^i_j\log(\alpha^3_j).\nonumber
\end{alignat} 
All of the weights in $W^3$ were put through hard tanh activation functions,
\begin{alignat}{6}
H_{jk}(W^3_{jk})&:=&&\min(\max(W^3_{jk},-1),1).\nonumber
\end{alignat} 
The optimization problem is then
\begin{alignat}{6}
&\min\limits_{W,b}&&\text{ }\frac{1}{n}\sum_{i=1}^n 
\LL(\alpha^3(H(W^3)\alpha^2(W^2v^i+b^2)+b^3),y^i).\nonumber
\end{alignat}
Applying hard tanh activation functions directly to weights is similar to ideas such as weight 
normalization 
\cite{salimans2016} and using bounded-weights \cite{liao2004}. Our motivation 
to include these activation functions was to be able to compute a Lipschitz constant for 
$\LL$ and 
objectively test PISDG with parameters computed using Theorem \ref{SGDF}. The 
proof of the following property is in the appendix.  

\begin{prop}
	\label{nnl}
	Each function $\LL(\alpha^3(H(W^3)\alpha^2(W^2v^i+b^2)+b^3),y^i)$ 
	is\\ $L_i:=2\max(\sqrt{N_2N_3}\norm{[(v^i)^T,1]}_2,\sqrt{(N_2m^2+1)})$-Lipschitz continuous.
\end{prop}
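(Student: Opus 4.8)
The plan is to bound the Lipschitz constant of $\LL(\alpha^3(H(W^3)\alpha^2(W^2v^i+b^2)+b^3),y^i)$, viewed as a function of $x=[W^2,W^3,b^2,b^3]$, by composing two separate estimates: first, that the map $x\mapsto z^3:=H(W^3)\alpha^2(W^2v^i+b^2)+b^3$ is $\sqrt{2}\max(B,A)$-Lipschitz with $A:=\sqrt{N_2m^2+1}$ and $B:=\sqrt{N_2N_3}\norm{[(v^i)^T,1]}_2$; and second, that the softmax--cross-entropy map $z^3\mapsto\LL(\alpha^3(z^3),y^i)$ is $\sqrt{2}$-Lipschitz. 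Multiplying these gives $2\max(B,A)=L_i$. The structural reason the constants are uniform (global) rather than merely local is exactly that the hard tanh keeps every entry of $H(W^3)$ in $[-1,1]$ and ReLU-$m$ keeps every entry of $\alpha^2$ in $[0,m]$, so $\norm{H(W^3)}_F\leq\sqrt{N_2N_3}$ and $\norm{\alpha^2}_2\leq m\sqrt{N_2}$ no matter what the parameters are.

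For the first estimate I would take two parameter vectors $x$, $x'$, write $z^3$, $\tilde z^3$ for the corresponding values, and control $\norm{z^3-\tilde z^3}_2$. Since ReLU-$m$ and hard tanh are $1$-Lipschitz componentwise, $\norm{\alpha^2-\tilde\alpha^2}_2\leq\norm{(W^2-\tilde W^2)v^i+(b^2-\tilde b^2)}_2\leq\norm{[(v^i)^T,1]}_2\,\norm{(W^2-\tilde W^2,b^2-\tilde b^2)}_2$ and $\norm{H(W^3)-H(\tilde W^3)}_F\leq\norm{W^3-\tilde W^3}_F$. Then the add-and-subtract decomposition $z^3-\tilde z^3=H(W^3)(\alpha^2-\tilde\alpha^2)+(H(W^3)-H(\tilde W^3))\tilde\alpha^2+(b^3-\tilde b^3)$, combined with $\norm{H(W^3)}_F\leq\sqrt{N_2N_3}$ and $\norm{\tilde\alpha^2}_2\leq m\sqrt{N_2}$, yields $\norm{z^3-\tilde z^3}_2\leq B\,\norm{(W^2-\tilde W^2,b^2-\tilde b^2)}_2+A\,\norm{(W^3-\tilde W^3,b^3-\tilde b^3)}_2$, where the $A=\sqrt{N_2m^2+1}$ appears by a Cauchy--Schwarz bound on $m\sqrt{N_2}\norm{W^3-\tilde W^3}_F+\norm{b^3-\tilde b^3}_2$. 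A second Cauchy--Schwarz over the two parameter blocks, whose squared norms sum to $\norm{x-x'}_2^2$, gives $\norm{z^3-\tilde z^3}_2\leq\sqrt{A^2+B^2}\,\norm{x-x'}_2\leq\sqrt{2}\max(B,A)\,\norm{x-x'}_2$.

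For the second estimate I would use the standard softmax--cross-entropy identity $\nabla_{z^3}\LL(\alpha^3(z^3),y^i)=\alpha^3-y^i$, obtained from $\LL=-\langle y^i,z^3\rangle+\log\sum_k e^{z^3_k}$ together with $\sum_j y^i_j=1$; this is a smooth function of $z^3$. Since $y^i$ is one-hot with its $1$ in coordinate $j^*$ and $\alpha^3$ is a probability vector, setting $s:=1-\alpha^3_{j^*}\geq 0$ gives $\norm{\alpha^3-y^i}_2^2=s^2+\sum_{j\neq j^*}(\alpha^3_j)^2\leq s^2+\bigl(\sum_{j\neq j^*}\alpha^3_j\bigr)^2=2s^2\leq 2$, so the gradient has norm at most $\sqrt{2}$ everywhere and the mean value theorem gives $\sqrt{2}$-Lipschitz continuity. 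Composing the two estimates, $|\LL(z^3(x))-\LL(z^3(x'))|\leq\sqrt{2}\,\norm{z^3(x)-z^3(x')}_2\leq\sqrt{2}\cdot\sqrt{2}\max(B,A)\,\norm{x-x'}_2=L_i\,\norm{x-x'}_2$.

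There is no deep obstacle; the argument is essentially bookkeeping. The step needing the most care is keeping the two Cauchy--Schwarz applications straight — the first collapsing $m\sqrt{N_2}\norm{W^3-\tilde W^3}_F+\norm{b^3-\tilde b^3}_2$ into $A$ times the block norm, the second combining the $(W^2,b^2)$ and $(W^3,b^3)$ blocks into $\sqrt{A^2+B^2}\leq\sqrt{2}\max(B,A)$ times $\norm{x-x'}_2$ — and observing that the factor $2$ in $L_i$ is precisely $\sqrt{2}\cdot\sqrt{2}$; it would be lost (replaced by $2\sqrt{2}\max(B,A)$) if one bounded $\norm{\alpha^3-y^i}_2$ only crudely by $\norm{\alpha^3}_2+\norm{y^i}_2\leq 2$ instead of by $\sqrt{2}$.
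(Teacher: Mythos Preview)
Your proof is correct and follows essentially the same approach as the paper: both establish $\sqrt{2}$-Lipschitz continuity of $z^3\mapsto\LL$ via $\nabla_{z^3}\LL=\alpha^3-y^i$, bound the Lipschitz constant of the parameter-to-$z^3$ map blockwise using $\norm{H(W^3)}_F\leq\sqrt{N_2N_3}$ and $\norm{\alpha^2}_2\leq m\sqrt{N_2}$, and combine the two blocks with a $\sqrt{2}$ factor. The only cosmetic differences are that the paper packages the blockwise bounds via block-diagonal matrices and does the add--subtract at the level of $\LL$ (using Young's inequality at the end), whereas you do the add--subtract directly on $z^3$ and use Cauchy--Schwarz; these yield the same constants.
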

By the Lipschitz continuity proved in Property \ref{nnl}, $\LL(\alpha^3(H(W^3)\alpha^2(W^2v^i+b^2)+b^3),y^i)$ is differentiable almost everywhere in $x=[W,b]$. However, applying the chain rule as done in backpropagation for deep learning models, even when Lipschitz continuous, is not generally valid for almost all $x$. For an in depth analysis of the validity of using auto differentiators, and in particular the backpropagation algorithm, see \cite{bolte2021}. In the following subsection, we show that using the chain rule to compute $\widetilde{\nabla} F(w_l^k,\xi_l^k)$ for the current application outputs the gradient with probability 1. 

\subsection{Using PISGD with the Chain Rule for Minimizing $\LL$}
Let $$\LL_i:=\LL(\alpha^3(H(W^3+z_{W^3})\alpha^2((W^2+z_{W^2})v^i+b^2+z_{b^2})+b^3+z_{b^3}),y^i)$$
where $z=[z_{W^3},z_{b^3},z_{W^2},z_{b^2}]\sim U(B(\sigma))^{N_3\times N_2+N_3+N_2\times N_1+N_2}$ is the required iterate perturbation used in PISGD. In this application, $\xi\in \RR^p$ is uniformly drawn from $(v,y)$, i.e. $\xi=(v^i,y^i)$ for any $i\in [1,2,...,n]$ with probability $\frac{1}{n}$. For simplicity, we omit iteration or sample notation. Our analysis holds considering  $[W,b]=x^k$, $[z_{W^3},z_{b^3},z_{W^2},z_{b^2}]=z^k_l$ and $(v^i,y^i)=(v^k_l,y^k_l)=\xi^k_l$ for any $k\in[1,...,K]$ and $l\in [1,...,S]$. Our approach is to show that using the chain rule for each decision variable $x'\in x$ produces the partial derivative $\frac{\partial \LL_i}{\partial x'}(x+z)$ with probability 1. Given that $\LL_i$ is Lipschitz continuous, this implies that the chain rule outputs $\nabla_{x'}\LL_i(x+z)$ with probability 1 given that $\LL_i$ is differentiable almost everywhere. Over the course of running PISGD, there are a countable number of partial derivatives to be approximated, hence $\widetilde{\nabla} F(w_l^k,\xi_l^k)$ equals the gradient for $k\in[1,...,K]$ and $l\in [1,...,S]$ with probability 1 using the chain rule. In our implementation of backpropagation, the following formulas for computing an approximate gradient were used. All arguments have been omitted to make the formulas simpler.
\begin{alignat}{6}
	\widetilde{\nabla}_{b^3_j}\LL_i=&\frac{\partial\LL_i}{\partial z^3_j}\label{bp}\\
	=&(a^3_j-y^i_j)\nonumber\\	
	\widetilde{\nabla}_{W^3_{jk}}\LL_i=&\frac{\partial\LL_i}{\partial z^3_j}\frac{\partial z^3_j}{\partial H_{jk}}\frac{\partial H_{jk}}{\partial W_{jk}^3}\nonumber\\
	=&(a^3_j-y^i_j)\alpha_k^2\frac{\partial H_{jk}}{\partial W_{jk}^3}\nonumber\\
	\widetilde{\nabla}_{b^2_j}\LL_i=&\sum_{h=1}^{N_3}\frac{\partial\LL_i}{\partial z^3_h}\frac{\partial z^3_h}{\partial \alpha_j^2}\frac{\partial \alpha_j^2}{\partial z^2_j}\nonumber\\
	=&\sum_{h=1}^{N_3}(a^3_h-y^i_h)H_{hj}\frac{\partial \alpha_j^2}{\partial z^2_j}\nonumber\\
	\widetilde{\nabla}_{W^2_{jk}}\LL_i=&\sum_{h=1}^{N_3}\frac{\partial\LL_i}{\partial z^3_h}\frac{\partial z^3_h}{\partial \alpha_j^2}\frac{\partial \alpha_j^2}{\partial z^2_j}\frac{\partial z_j^2}{\partial W^2_{jk}}\nonumber\\
	=&\sum_{h=1}^{N_3}(a^3_h-y^i_h)H_{hj}\frac{\partial \alpha_j^2}{\partial z^2_j}v^i_k\nonumber
\end{alignat}	

The proof of the following property is in the appendix.  
\begin{property}\label{backprop}
The approximate stochastic gradient $\widetilde{\nabla}\LL_i$, computed using the formulas \eqref{bp}, equals the gradient of $\LL_i$ with probability 1.
\end{property}

\pgfplotsset{every axis title/.append style={at={(0.5,0.95)}}}
\pgfplotsset{every tick label/.append style={font=\footnotesize}}
\begin{figure*}[t]
	\hspace*{-0.3cm}
	\begin{tikzpicture}
	\begin{groupplot}[group style={group name=myplot,group size= 2 by 1,
		horizontal sep=1.6cm},height=8.25cm,width=8.25cm]
	\nextgroupplot[label style={font=\normalsize},xlabel=iteration, x label style={at={(axis 
			description cs:0.5,0.02)}}, ylabel=$\text{log}(f(w))$,
	y label style={at={(axis description cs:0.01,.5)}},xmin=0,xmax=62500,ymax=0.1,ymin=-3]
	\addplot[line width=1.5pt,draw=orange, dashed]
	table[x=x,y=y]{logSGDdouble.dat};\label{plots:plot2}
	\addplot[line width=1.5pt,draw=blue]
	table[x=x,y=y]{logPISGDdouble.dat};\label{plots:plot1}
	\addplot[line width=1.5pt,draw=teal, dashed]
	table[x=x,y=y]{logSGD.dat};\label{plots:plot4}
	\addplot[line width=1.5pt,draw=magenta]
	table[x=x,y=y]{logPISGD.dat};\label{plots:plot3}	
	\addplot[line width=1.5pt,draw=cyan, dashed]
	table[x=x,y=y]{logSGDhalf.dat};\label{plots:plot6}
	\addplot[line width=1.5pt,draw=red]
	table[x=x,y=y]{logPISGDhalf.dat};\label{plots:plot5}		
	\nextgroupplot[label style={font=\normalsize},xlabel=iteration, x label style={at={(axis 
			description cs:0.5,0.02)}},ylabel=$f(w)$,y label style={at={(axis description 
			cs:0.03,.5)}},,xmin=60000,xmax=62501,ymax=0.084,ymin=0.05]
	\addplot[line width=1.5pt,draw=orange, dashed]
	table[x=x,y=y]{lastSGDdouble.dat};\label{plots:plot2}
	\addplot[line width=1.5pt,draw=blue]
	table[x=x,y=y]{lastPISGDdouble.dat};\label{plots:plot1}	
	\addplot[line width=1.5pt,draw=teal, dashed]
	table[x=x,y=y]{lastSGD.dat};\label{plots:plot4}
	\addplot[line width=1.5pt,draw=magenta]
	table[x=x,y=y]{lastPISGD.dat};\label{plots:plot3}	
	\addplot[line width=1.5pt,draw=cyan, dashed]
	table[x=x,y=y]{lastSGDhalf.dat};\label{plots:plot6}
	\addplot[line width=1.5pt,draw=red]
	table[x=x,y=y]{lastPISGDhalf.dat};\label{plots:plot5}		
	\end{groupplot}
	\path (myplot c1r1.south west|-current bounding box.south)--
	coordinate(legendpos)
	(myplot c2r1.south east|-current bounding box.south);
	\matrix[matrix of nodes,anchor=south,draw,inner sep=0.2em,draw,
	column 2/.style={nodes={font=\footnotesize}},
	column 4/.style={nodes={font=\footnotesize}},
	column 6/.style={nodes={font=\footnotesize}},
	column 8/.style={nodes={font=\footnotesize}}]at([yshift=-1.5cm]legendpos)
	{\ref{plots:plot6}&SGD ($\eta=0.005$)&[5pt]
		\ref{plots:plot4}&SGD ($\eta=0.01$)&[5pt]
		\ref{plots:plot2}&SGD ($\eta=0.02$)&[5pt]\\
		\ref{plots:plot5}&PISGD ($\eta=0.005$)&[5pt]
		\ref{plots:plot3}&PISGD ($\eta=0.01$)&[5pt]		
		\ref{plots:plot1}&PISGD ($\eta=0.02$)&[5pt]							
		\\};
	\end{tikzpicture}
	\caption{A comparison of the performance of PISGD and SGD. The left graph has all of the iterates 
		plotted on a log scale. The right graph is the last $4\%$ of the iterates on a linear scale.} 
	\label{T1}
\end{figure*}
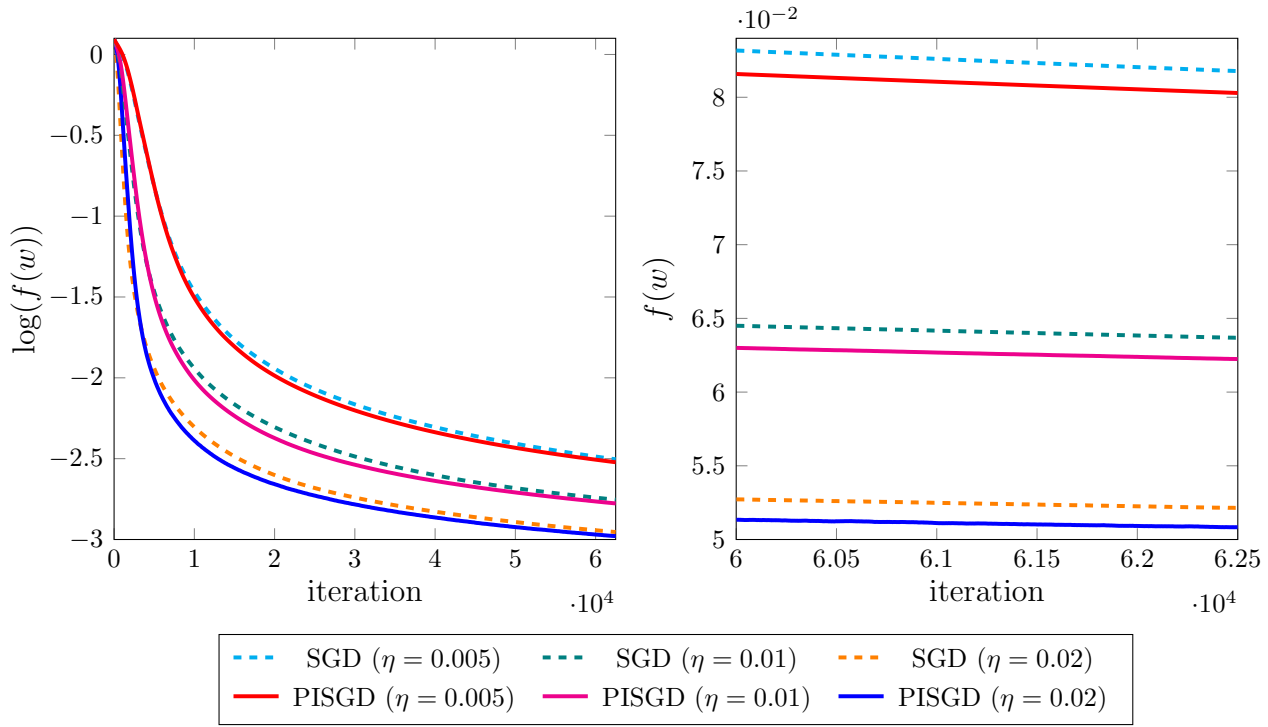

\subsection{Numerical Substantiation}
The described neural network was trained using PISGD as well as with Algorithm \ref{alg:sgd} implemented with $w^k_l=x^k$, i.e. no iterate perturbation, which 
matches how neural networks are generally trained using a mini-batch stochastic gradient descent 
algorithm, which will be referred to as SGD. We ran both algorithms under a typical implementation of SGD: 
The step size was chosen as $\eta=0.01$, which is a 
default setting when using, for example, Keras 2.3.0 \cite{chollet2015}, as well as $\eta=0.02$ and 
$\eta=0.005$. For each $\eta$, following Theorem \ref{SGDF}, $\sigma=\eta L_0\sqrt{d}$, where $d$ is 
the number of decision variables and $L_0$ is the mean value of $\{L_i\}$. The mini-batch size is 
generally chosen between 32-512 samples \cite{keskar2019}, so we chose $S=250$, which is 
roughly the average. From Theorem \ref{SGDF}, the number of iterations should satisfy 
$S=\lceil K^{1-\beta}\rceil$ for $\beta\in (0,1)$. We chose $K=62,500$, inferring a choice of 
$\beta=0.5$. 
The neural network was trained five times with each algorithm, and the function values at each iteration 
were averaged together. Both algorithms were implemented in Python 3.6 on a server running Ubuntu 16.04 with an Intel Xeon E5-2698 v4 processor. Examining Figure \ref{T1} we can see that both algorithms performed similarly, with PISGD producing a slightly lower loss function than SGD in this application. 

\section{Conclusion}
\label{con}
In this paper a new variant of stochastic gradient descent, PISGD, was developed which contains two 
forms of randomness in the step direction from sampling the stochastic function's 
approximate gradient at randomly perturbed iterates. Using this methodology, non-asymptotic convergence to an expected  
$(\epsilon_1,\epsilon_2)$-stationary point was proven for minimizing stochastic Lipschitz continuous loss functions. From this result, the computational complexities for finding an expected  
$(\epsilon_1,\epsilon_2)$-stationary point and an $(\epsilon_1,\epsilon_2)$-stationary point with high probability were given, as well as a method to obtain a Clarke stationary point almost surely.\\

{\bf Acknowledgments} The research of the second author is supported in part by JSPS KAKENHI Grant Number 19H04069.

\appendix
{\normalsize

\section{Proofs and Auxiliary Results}

\subsection{Section \ref{int}}

\begin{prop}
	\label{LCGprop}	
	A bounded function $f(w)$ such that $|f(w)|\leq R$ for all $w\in\RR^d$, with a Lipschitz 
	continuous gradient with parameter $L_1$, is Lipschitz continuous with parameter 
	$L_0=2R+\frac{L_1}{2}d$.		
\end{prop}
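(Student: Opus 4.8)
The plan is to prove the statement by controlling $\norm{\nabla f(w)}_2$ uniformly in $w$. Since $f$ has an $L_1$-Lipschitz gradient it is differentiable everywhere, so if one establishes that $\norm{\nabla f(w)}_2 \le L_0$ for all $w$, then for arbitrary $w,w' \in \RR^d$ the fundamental theorem of calculus along the segment joining $w'$ to $w$ gives
\begin{alignat}{6}
|f(w) - f(w')| = \left|\medint\int_0^1 \langle \nabla f(w' + t(w-w')),\, w-w'\rangle\, dt\right| \le L_0\norm{w-w'}_2,\nonumber
\end{alignat}
so $f$ is $L_0$-Lipschitz. Thus the whole task reduces to the uniform gradient bound, and no measurability or differentiability subtleties arise because a function with Lipschitz gradient is everywhere $C^1$.

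To bound $\norm{\nabla f(w)}_2$ at a fixed $w$, I would restrict $f$ to the line through $w$ in the direction of steepest ascent. Assume $\nabla f(w) \ne 0$ (otherwise the bound is trivial), set $u = \nabla f(w)/\norm{\nabla f(w)}_2$, and let $\psi(t) := f(w + tu)$. Then $\psi$ is $C^1$ with $\psi'(t) = \langle \nabla f(w+tu), u\rangle$ and $\psi'(0) = \norm{\nabla f(w)}_2$; moreover $\psi'$ is $L_1$-Lipschitz, since $|\psi'(t)-\psi'(s)| \le \norm{\nabla f(w+tu) - \nabla f(w+su)}_2 \le L_1|t-s|$, and $|\psi(t)| \le R$ for all $t$. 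Applying the fundamental theorem of calculus on $[0,1]$,
\begin{alignat}{6}
\psi(1)-\psi(0) = \psi'(0) + \medint\int_0^1 (\psi'(t)-\psi'(0))\,dt \ge \norm{\nabla f(w)}_2 - \medint\int_0^1 L_1 t\, dt = \norm{\nabla f(w)}_2 - \frac{L_1}{2},\nonumber
\end{alignat}
while $\psi(1)-\psi(0) \le 2R$, hence $\norm{\nabla f(w)}_2 \le 2R + \frac{L_1}{2}$. Since $d \ge 1$ this is at most $2R + \frac{L_1}{2}d = L_0$, which proves the claim. (If one instead prefers a coordinatewise derivation, the same one-dimensional argument applied to $t \mapsto f(w + t e_i)$ bounds each $|\partial_i f(w)|$, and combining the $d$ coordinate bounds through $\norm{\cdot}_2 \le \norm{\cdot}_1$ produces a constant of the advertised $d$-dependent form directly.)

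The one point that needs care — and the main obstacle — is recognizing that the familiar quadratic ``descent lemma'' estimate $|f(w)-f(w')-\langle\nabla f(w'),w-w'\rangle| \le \tfrac{L_1}{2}\norm{w-w'}_2^2$ is by itself useless here, because $\norm{w-w'}_2$ is unbounded; the boundedness hypothesis $|f|\le R$ must be brought in to cap the growth of $f$ along a step of a fixed length, while $L_1$ only controls the error of the linear approximation over that step. Balancing these two effects over a unit-length interval is exactly what produces the constant, and the choice of a unit step is what makes the two contributions appear as $2R$ and $\tfrac{L_1}{2}$ respectively.
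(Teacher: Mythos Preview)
Your proof is correct and in fact proves a slightly stronger statement than the paper does: you obtain the dimension-free bound $\norm{\nabla f(w)}_2 \le 2R + \tfrac{L_1}{2}$, and then observe that this is at most $L_0 = 2R + \tfrac{L_1}{2}d$. The paper's proof follows the same template---apply the descent-lemma inequality over a fixed-length displacement, and combine the boundedness of $f$ with the quadratic remainder to bound the gradient---but chooses the displacement $y$ with $y_j = \sgn(\nabla_j f(w))$, so that $\langle \nabla f(w), y\rangle = \norm{\nabla f(w)}_1$ and $\norm{y}_2^2 = d$, yielding $\norm{\nabla f(w)}_2 \le \norm{\nabla f(w)}_1 \le 2R + \tfrac{L_1}{2}d$ directly. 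Your choice of the unit vector $u = \nabla f(w)/\norm{\nabla f(w)}_2$ is sharper because the step has length $1$ rather than $\sqrt{d}$, which is exactly what eliminates the dimension factor; the paper's coordinatewise step is what makes the stated constant come out with the explicit $d$. Both arguments reduce the problem to the same one-dimensional balance you identify in your final paragraph.
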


\begin{proof}
	A function has a Lipschitz continuous gradient if there exists a constant $L_1$ such that for all 
	$x,w\in \RR^d$, $\norm{\nabla f(x)-\nabla f(w)}_2\leq L_1\norm{x-w}_2$, which is equivalent to 
	(see \cite[Lemma 1.2.3]{nester2004})
	\begin{alignat}{6}
	|f(x)-f(w)-\langle \nabla f(w),x-w\rangle|&\leq&&\frac{L_1}{2}\norm{x-w}^2_2.\label{eq:smoov2}
	\end{alignat}
	
	By the mean value theorem, if a differentiable function has a bounded gradient such 
	that $\norm{\nabla f(w)}_2\leq L_0$ for all $w\in \RR^n$, then it is Lipschitz continuous with 
	parameter $L_0$. Using \eqref{eq:smoov2} with $x=w-y$ for any $y\in \RR^d$,	
	$$f(w-y)-f(w)+\langle \nabla f(w),y\rangle\leq\frac{L_1}{2}\norm{y}^2_2.$$	
	Taking $y_j=\sgn(\nabla_j f(w))$ for $j=1,...,d$, and using the boundedness of $f(w)$,  
	$$\norm{\nabla f(w)}_2\leq \norm{\nabla f(w)}_1\leq 2R+\frac{L_1}{2}d.$$
\end{proof}

\subsection{Section \ref{pre}}

\begin{customproperty}{\cite[Lemma 1]{bianchi2020}}{\ref{AAder}}
	The stochastic function $F(w,\xi)$ is differentiable in $w$ almost everywhere on the product measure space 
		${(\RR^{d+p},\B_{\RR^{d+p}},m^d\times P_{\xi})}$.	
\end{customproperty}

\begin{proof}
	Let $D\subseteq\{(w,\xi):w\in\RR^d,\text{ }\xi\in\Xi\}=:\overline{\Xi}$ be the set of points where $F(w,\xi)$ is differentiable in $w$ within the set $\overline{\Xi}$ where it is Lipschitz continuous in $w$. For $F(w,\xi)$ to be differentiable at a point $(w,\xi)$, there exists a unique $g\in\RR^d$ such that for any $\omega>0$, there exists a $\delta>0$ such that for all $h\in \RR^d$ where $0<||h||_2<\delta$, it holds that 
	\begin{alignat}{6}
	\frac{|F(w+h,\xi)-F(w,\xi)-\langle g,h\rangle |}{||h||_2}<\omega.\nonumber
	\end{alignat} 	
	For simplicity let $H(w,\xi,h,g):=\frac{|F(w+h,\xi)-F(w,\xi)-\langle g,h\rangle |}{||h||_2}$. The set $D$ can be represented as 
	\begin{alignat}{6}
	\bigcup_{g\in\RR^d}\bigcap_{\omega\in\QQ_{>0}}
	\bigcup_{\delta\in\QQ_{>0}}\bigcap_{\substack{0<||h||_2<\delta\\h\in\QQ^d}}	
	\left\{(w,\xi)\in \overline{\Xi}:H(w,\xi,h,g)<\omega\right\},\nonumber
	\end{alignat} 	
	where $h$ can be restricted to be over $\QQ^d$ as $H(w,\xi,h,g)$ is continuous in $h$ when $||h||_2>0$ and $\QQ^d$ is dense in $\RR^d$. We want to prove that the set $\hat{D}$ defined as   
	\begin{alignat}{6}
	\bigcap_{\omega\in\QQ_{>0}}
	\bigcup_{(g,\delta)\in\QQ^d\times\QQ_{>0}}\bigcap_{\substack{0<||h||_2<\delta\\h\in\QQ^d}}	
	\left\{(w,\xi)\in \overline{\Xi}:H(w,\xi,h,g)<\omega\right\},\nonumber
	\end{alignat} 
	is equal to $D$, proving that $D$ is an element of $\B_{\RR^{d+p}}$.\\

	For an element $(w',\xi')\in D$ with $g'$ being the gradient at $(w',\xi')$, for any $\omega>0$, take $\delta(\frac{\omega}{2})>0$ such that
	\begin{alignat}{6}
	(w',\xi')\in\bigcap\limits_{\substack{0<||h||_2<\delta(\frac{\omega}{2})\\h\in\QQ^d}}	\left\{(w,\xi)\in \overline{\Xi}:H(w,\xi,h,g')<\frac{\omega}{2}\right\},\nonumber
	\end{alignat} 
	and take $g\in \QQ^d$ such that $||g'-g||_2<\frac{\omega}{2}$. It follows that
	\begin{alignat}{6}
	&&H(w',\xi',h,g')&<\frac{\omega}{2}\nonumber\\
	\Longrightarrow&&\frac{|F(w'+h,\xi')-F(w',\xi')-\langle g,h\rangle-
	\langle g'-g,h\rangle|}{||h||_2}&<\frac{\omega}{2}\nonumber\\
	\Longrightarrow&&H(w',\xi',h,g)-\frac{|\langle g'-g,h\rangle|}{||h||_2}&<\frac{\omega}{2}\nonumber\\
	\Longrightarrow&&H(w',\xi',h,g)&<\omega\nonumber
	\end{alignat} 
	when $0<||h||_2<\delta(\frac{\omega}{2})$, using the reverse triangle inequality for the third inequality, proving that $(w',\xi')\in \hat{D}$.\\ 
	
	Considering now an element $(w',\xi')\in \hat{D}$, let $\{\omega_i\}\subset \QQ_{>0}$ be a non-increasing sequence approaching zero in the limit, with $\{g_i\}\subset \QQ^d$, and let $\{\delta_i\}\subset\QQ_{>0}$ be a non-increasing sequence such that for all $i\in\NN$, $H(w',\xi',h,g_i)<\omega_i$ when $0<||h||_2<\delta_i$. The sequence $\{g_i\}$ is bounded as 
	\begin{alignat}{6}
	&&H(w',\xi',h,g_i)&<\omega_i\nonumber\\
	\Longrightarrow&&\frac{|F(w'+h,\xi')-F(w',\xi')-\langle g_i,h\rangle|}{||h||_2}&<\omega_i\nonumber\\
	\Longrightarrow&&\frac{|\langle g_i,h\rangle|}{||h||_2}-\frac{|F(w'+h,\xi')-F(w',\xi')|}{||h||_2}&<\omega_i\nonumber\\
	\Longrightarrow&&\frac{|\langle g_i,h\rangle|}{||h||_2}&<\omega_i+C(\xi')\label{gbound}	
	\end{alignat} 
	for all $0<||h||_2<\delta_i$, using again the reverse triangle inequality and the Lipschitz continuity of $F(w,\xi')$. Taking $h=\delta'_i\frac{g_i}{||g_i||_2}$ for any $\delta'_i<\delta_i$ in \eqref{gbound}, 
	\begin{alignat}{6}
	||g_i||_2&<\omega_i+C(\xi')\leq\omega_1+C(\xi').\nonumber
	\end{alignat} 
	
	Given that the sequence $\{g_i\}$ is bounded, it contains at least one accumulation point $g'$. There then exists a subsequence $\{i_j\}\subset \NN$ such that for any $\omega\in \QQ_{>0}$, there exists a $J\in\NN$ such that for $j>J$, $\omega_{i_j}<\frac{\omega}{2}$ and $||g_{i_j}-g'||_2<\frac{\omega}{2}$, from which it holds that  
	$H(w',\xi',h,g')\leq H(w',\xi',h,g_{i_j})+||g'-g_{i_j}||_2<\omega$ when $0<||h||_2<\delta_{i_j}$, proving $g'$ is the gradient of $F(w,\xi)$ at $(w',\xi')$ and $(w',\xi')\in D$.\\   
	
	We now want to establish that $F(w,\xi)$ is differentiable almost everywhere in $w$. Let 
	$\ind_{D^c}(w,\xi)$ be the indicator function of the complement of $D$. The set $D^c$ is the set of points $(w,\xi)$ where $F(w,\xi)$ is not differentiable or not Lipschitz continuous in $w$. Showing that $D^c$ is a null set is then sufficient. Given that the function $\ind_{D^c}(w,\xi)\in L^+(\RR^d\times\RR^p)$, and $m^d$ 
	and 
	$P$ are $\sigma$-finite, the measure of $D^c$ can be computed by the iterated integral
	\begin{alignat}{6}
	\EE_{\xi}\left[\medint\int_{w\in\RR^d}\ind_{D^c}(w,\xi)dw\right]\nonumber
	\end{alignat} 
	by Tonelli's theorem. Let $\overline{\xi}\in\RR^p$ be chosen such that $F(w,\overline{\xi})$ is 
	Lipschitz continuous in $w$. By Rademacher's theorem, $F(w,\overline{\xi})$ is differentiable in 
	$w$ 
	almost everywhere, which implies that  
	$\medint\int_{w\in\RR^d}\ind_{D^c}(w,\overline{\xi})dw=0$. As this holds for almost every $\xi$, $\EE_{\xi}[\medint\int_{w\in\RR^d}\ind_{D^c}(w,\xi)dw]=0$ 
	\cite[Proposition 2.16]{folland1999}. 
\end{proof}

\begin{ex}\label{appgrad}
Let $e_j$ for $j=1,...,d$ denote the standard basis of $\RR^d$. For $i\in\NN$, let
\begin{alignat}{6}
&h^i_j(w,\xi)&&:=i(F(w+i^{-1}e_j,\xi)-F(w,\xi))\nonumber
\end{alignat} 
define a sequence $\{h^i_j(w,\xi)\}_{i\in\NN}$ of real-valued Borel measurable functions.
It holds that $h^+_j(w,\xi):=\limsup\limits_{i\rightarrow \infty} 
h^i_j(w,\xi)$ and $h^-_j(w,\xi):=\liminf\limits_{i\rightarrow \infty} 
h^i_j(w,\xi)$ are extended real-valued Borel measurable functions \cite[Lemma 2.9]{bartle1995}. For $\zeta\in[0,1]$ and $a\in \RR$, a family of candidate approximate gradients can be defined as having components  
\begin{alignat}{6}\label{famapp}
\widetilde{\nabla} F_j(w,\xi)=\begin{cases}
\zeta h^+_j(w,\xi)+(1-\zeta)h^-_j(w,\xi)&\text{ if } \{h^+_j(w,\xi), h^-_j(w,\xi)\}\in\RR\\
a & \text{ otherwise.}  
\end{cases}
\end{alignat} 
The function $\widetilde{\nabla} F(w,\xi)$ will equal $\nabla F(w,\xi)$ 
wherever it 
exists. The set 
$$A=\{(w,\xi): |h^+_j(w,\xi)|<\infty\}\cap\{(w,\xi):|h^-_j(w,\xi)|<\infty\}$$ is measurable given that for an extended real-valued measurable function $h$, the set $\{|h|=\infty\}$ is measurable \cite[Page 11]{bartle1995}. 
Let $\ind_A(w,\xi)$ and $\ind_{A^c}(w,\xi)$ denote the indicator functions of $A$ and its complement. The product of extended real-valued functions is measurable \cite[Page 12-13]{bartle1995}, implying that 
$\zeta h^+_j(w,\xi)\ind_{A}(w,\xi)$, $(1-\zeta)h^-_j(w,\xi)\ind_{A}(w,\xi)$, and 
$a\ind_{A^c}(w,\xi)$ are all measurable. 
Given that all three functions are real-valued,\footnote{Using the standard convention that $0\cdot\infty=0$.} their sum is measurable, implying the measurability of \eqref{famapp}.
\end{ex}

\begin{customproperty}{\ref{expg}}
	For almost every $w\in\RR^d$   	
	\begin{alignat}{6}
	&\widetilde{\nabla} f(w)=\EE_{\xi}[\widetilde{\nabla} F(w,\xi)],\nonumber
	\end{alignat}	
	where $\widetilde{\nabla} f(w)$ and $\widetilde{\nabla} F(w,\xi)$ are approximate gradients of 
	$f(w)$ and $F(w,\xi)$, 
	respectively. 
\end{customproperty}

\begin{proof}
	Following the proof of Property \ref{AAder}, let $D^c\subset\{(w,\xi):w\in\RR^d,\text{ }\xi\in\RR^p\}$ be the same Borel measurable set containing the points where $F(w,\xi)$ is not differentiable or not Lipschitz continuous in $w$. By Tonelli's theorem, it was established that
	\begin{alignat}{6}\label{eq:52}
	\medint\int_{w\in\RR^d}\EE_{\xi}[\ind_{D^c}(w,\xi)]dw=0.
	\end{alignat} 
	The function $G(w):=\EE_{\xi}[\ind_{D^c}(w,\xi)]$ is measurable in $(\RR^d,\B_{\RR^{d}})$, hence the set 
	$D_w:=\{w\in \RR^d: G(w)=0\}$ is measurable with full measure by \eqref{eq:52}. As in Example \ref{appgrad}, let $h^i_j(w,\xi):=i(F(w+i^{-1}e_j,\xi)-F(w,\xi))$ for $i\in\NN$.
	For all $w\in D_w$, $\lim\limits_{i\rightarrow \infty} h^i_j(w,\xi)=\widetilde{\nabla} F_j(w,\xi)$
	for almost all $\xi$, by the assumption that $\widetilde{\nabla} F(w,\xi)=\nabla F(w,\xi)$ almost everywhere $F(w,\xi)$ is differentiable. Given the Lipschitz continuity condition of $F(w,\xi)$,	for all $i\in\NN$,
	\begin{alignat}{6}
	&|h^i_j(w,\xi)|&&\leq iC(\xi)|i^{-1}|=C(\xi)\nonumber
	\end{alignat}	
	for almost all $\xi$. Given that $C(\xi)\in L^1(P_{\xi})$, the 
	dominated convergence theorem can be applied for all $w\in D_w$. It follows that  
	\begin{alignat}{6}
	&\EE_{\xi}[\widetilde{\nabla} F_j(w,\xi)]&&\stackrel{a.e.}{=}\lim\limits_{i\rightarrow \infty} 
	\EE_{\xi}\left[h_j(w,\xi,i)\right]\nonumber\\
	&&&=\lim\limits_{i\rightarrow \infty} i(f(w+i^{-1}e_j)-f(w))\nonumber\\
	&&&\stackrel{a.e.}{=}\nabla f_j(w)\nonumber\\
	&&&\stackrel{a.e.}{=}\widetilde{\nabla} f_j(w),\nonumber
	\end{alignat}	
	where the first equality holds for all $w\in D_w$, the third equality holds for almost all $w$ due to Rademacher's theorem, and the last equality holds almost everywhere by assumption.
\end{proof}

\begin{customproperty}{\ref{SetValMeas}}
	For any $\epsilon\geq 0$, $\dist(0,\partial_{\epsilon}f(w))$ is a Borel measurable function in $w\in\RR^d$.
\end{customproperty}

\begin{proof}
The set valued function \eqref{clesub} is outer semicontinuous \cite[Proposition 2.7]{goldstein1977}. The function $\dist(0,\partial_{\epsilon}f(w))$ is then lower semicontinuous \cite[Proposition 5.11 (a)]{rockafellar2009}, hence Borel measurable. 
\end{proof}

\subsection{Section \ref{ssd}}

\begin{customlemma}{\ref{ineqsmooth}}	
For $\{x,x',z\}\in \RR^d$, let $w=x+z$ and $w'=x'+z$. For a Lipschitz continuous function 
$f(\cdot)$ 
with approximate gradient $\widetilde{\nabla} f(\cdot)$, and any $x,x'\in \RR^d$,
\begin{alignat}{6}	
&f(w)-f(w')-\langle \widetilde{\nabla} f(w'),x-x'\rangle=\medint\int_0^1\langle\widetilde{\nabla} 
f(w'+v(x-x'))-\widetilde{\nabla} 
f(w'),x-x'\rangle dv\nonumber
\end{alignat}	
holds for almost all $z\in\RR^n$. 	
\end{customlemma}

\begin{proof}
	Throughout the proof let $\{x,x'\}\in \RR^d$ be fixed. Consider the function in 
	$v\in[0,1]$,	
	\begin{alignat}{6}
	&&\hat{f}_z(v)&=f(x'+z+v(x-x')),\nonumber
	\end{alignat}
	for any $z\in \RR^d$. Where it exists, 
	\begin{alignat}{6}
	&\hat{f}_z'(v)&=\lim\limits_{h\rightarrow 
		0}&\frac{f(x'+z+(v+h)(x-x'))-f(x'+z+v(x-x'))}{h}\nonumber\\
	&&=\lim\limits_{h\rightarrow 0}&\frac{f(x'+z+v(x-x')+h(x-x'))-f(x'+z+v(x-x'))}{h}\nonumber
	\end{alignat}
	is equal to the directional derivative of $f(\hat{w})$ at $\hat{w}=x'+z+v(x-x')$ in the direction 
	of $(x-x')$. Let $\ind_{D^c}(\cdot)$ be the indicator function of the complement of the set where 
	$f(\cdot)$ is differentiable, which is a Borel measurable function from the continuity of $f(w)$ \cite[Page 211]{federer199}. Its composition with 
	the continuous 
	function $\hat{w}$ in $(z,v)\in \RR^d\times [0,1]$ is then as well. Similar 
	to the proof of Property \ref{AAder}, using Tonelli's theorem, the measure of 
	where $f(\hat{w})$ is not differentiable can be computed as 	
	\begin{alignat}{6}\label{multint}
	\medint\int_0^1\medint\int_{z\in\RR^d}\ind_{D^c}(x'+z+v(x-x'))dzdv.
	\end{alignat}
	For any $v\in [0,1]$, $\medint\int_{z\in\RR^d}\ind_{\overline{D}}(x'+z+v(x-x'))dw=0$ by Rademacher's theorem, 
	implying that \eqref{multint} equals $0$, and $f(\hat{w})$ is differentiable for almost all 
	$(z,v)$. It follows that for almost all $(z,v)$, the directional derivative exists, the 
	approximate gradient $\widetilde{\nabla} f(\hat{w})$ is equal to the gradient, and 
	\begin{alignat}{6}\label{dirder}
	\hat{f}_z'(v)&=&&\langle\widetilde{\nabla} f(x'+z+v(x-x')),x-x'\rangle.
	\end{alignat}
	In addition $\hat{f}_z(v)$ is Lipschitz continuous, 
	\begin{alignat}{6}
	&|\hat{f}_z(v)-\hat{f}_z(v')|&=&|f(x'+z+v(x-x'))-f(x'+z+v'(x-x'))|\nonumber\\
	&&\leq&L_0\norm{x-x'}_2|v-v'|.\nonumber
	\end{alignat}
	Choosing $z=\overline{z}$ such that \eqref{dirder} holds for almost all $v\in[0,1]$, by 
	the fundamental theorem of calculus for Lebesgue integrals, 
	\begin{alignat}{6}
	&&f(x+\overline{z})=\hat{f}_{\overline{z}}(1)=&\hat{f}_{\overline{z}}(0)+\medint\int_0^1\hat{f}_{\overline{z}}'(v)dv\nonumber\\
	&&=&f(x'+\overline{z})+\medint\int_0^1\langle\widetilde{\nabla} f(x'+\overline{z}+v(x-x')),x-x'\rangle 
	dv.\nonumber
	\end{alignat}	
	Rearranging and subtracting $\langle \widetilde{\nabla} f(x'+\overline{z}),x-x'\rangle$ from both 
	sides,
	\begin{alignat}{6}	
	&f(x+\overline{z})-f(x'+\overline{z})-\langle \widetilde{\nabla} 
	f(x'+\overline{z}),x-x'\rangle\nonumber\\
	=&\medint\int_0^1\langle\widetilde{\nabla} 
	f(x'+\overline{z}+v(x-x'))-\widetilde{\nabla} 
	f(x'+\overline{z}),x-x'\rangle dv.\nonumber
	\end{alignat}
	As for almost all $z\in \RR^d$, \eqref{dirder} holds for almost all $v\in[0,1]$,
	\begin{alignat}{6}	
	&f(w)-f(w')-\langle \widetilde{\nabla} 
	f(w'),x-x'\rangle\nonumber=\medint\int_0^1\langle\widetilde{\nabla} f(w'+v(x-x'))-\widetilde{\nabla} 
	f(w'),x-x'\rangle dv\nonumber	
\end{alignat}	
holds for almost all $z\in\RR^n$. 	
\end{proof}

\begin{customlemma}{\ref{gradbound}}	
	The norms of the approximate gradients are bounded, with $\norm{\widetilde{\nabla} f(w)}_2\leq L_0$ and 
	$\norm{\widetilde{\nabla} F(w,\xi)}_2\leq C(\xi)$ almost everywhere.
\end{customlemma}

\begin{proof}
	As $f(w)$ is differentiable almost everywhere, and $\widetilde{\nabla} f(w)$ is equal to 
	the gradient of $f(w)$ almost everywhere it is differentiable, using the directional derivative 
	and Lipschitz continuity of $f(w)$, 
	\begin{alignat}{6}
	&\norm{\widetilde{\nabla} f(w)}^2_2&&=\lim\limits_{h\rightarrow 0} 
	\frac{f(w+h\widetilde{\nabla} f(w))-f(w)}{h}\nonumber\\
	&&&\leq L_0\norm{\widetilde{\nabla} f(w)}_2\nonumber
	\end{alignat}
	holds almost everywhere. Similarly, by assumption and Property \ref{AAder}, $F(w,\xi)$ 
	is Lipschitz continuous and differentiable almost everywhere, with $\widetilde{\nabla} F(w,\xi)$ equal to the 
		gradient almost everywhere $F(w,\xi)$ is differentiable. It follows that almost everywhere,
		\begin{alignat}{6}
		&\norm{\widetilde{\nabla} F(w,\xi)}^2_2&&=\lim\limits_{h\rightarrow 0} 
		\frac{F(w+h\widetilde{\nabla} F(w,\xi),\xi)-F(w,\xi)}{h}\nonumber\\
		&&&\leq C(\xi)\norm{\widetilde{\nabla} F(w,\xi)}_2.\nonumber
		\end{alignat}
\end{proof}	

\begin{customlemma}{\ref{stobound}}	
	Let $x\in\RR^d$ and $z\in\RR^d$ be random variables, where $z$ is absolutely continuous, and $x$, $z$, and $\xi$ (as previously defined) are mutually independent. For any $S\in \ZZ_{>0}$, let  $$\overline{\nabla}F:=\frac{1}{S}\sum_{l=1}^S\widetilde{\nabla}	F(x+z_l,\xi_l),$$ 	
	where $z_l\sim P_z$ and $\xi_l\sim P_{\xi}$ for $l=1,...,S$. It holds that  
	\begin{alignat}{6}
	&&&\EE[\norm{\overline{\nabla}F}^2_2
			-\norm{\EE[\overline{\nabla}F|x]}^2_2]\nonumber\\
		&&=&\EE[\norm{\overline{\nabla}F-\EE[\overline{\nabla}F|x]}^2_2]\nonumber\\
		&&\leq&\frac{Q}{S},\nonumber
	\end{alignat}
	where $Q:=\EE[C(\xi)^2]$.
\end{customlemma}
\begin{proof}
We first show that		 
		$\EE[\norm{\overline{\nabla}F}^2_2
				-\norm{\EE[\overline{\nabla}F|x]}^2_2]=
				\EE[\norm{\overline{\nabla}F-\EE[\overline{\nabla}F|x]}^2_2]$:
	\begin{alignat}{6}
	&&&\EE[\norm{\overline{\nabla}F-\EE[\overline{\nabla}F|x]}^2_2]\nonumber\\
	&&=&\EE[\norm{\overline{\nabla}F}^2_2-2\langle \overline{\nabla}F,\EE[\overline{\nabla}F|x]\rangle+
	\norm{\EE[\overline{\nabla}F|x]}^2_2]\nonumber\\
	&&=&\EE[\norm{\overline{\nabla}F}^2_2]
	-2\EE[\langle \overline{\nabla}F,\EE[\overline{\nabla}F|x]\rangle]
	+\EE[\norm{\EE[\overline{\nabla}F|x]}^2_2]\nonumber\\
	&&=&\EE[\norm{\overline{\nabla}F}^2_2]
	-2\EE(\EE[\langle \overline{\nabla}F,\EE[\overline{\nabla}F|x]\rangle|x])
	+\EE[\norm{\EE[\overline{\nabla}F|x]}^2_2]\nonumber\\
	&&=&\EE[\norm{\overline{\nabla}F}^2_2]
	-2\EE[\langle\EE[\overline{\nabla}F|x],\EE[\overline{\nabla}F|x]\rangle]
	+\EE[\norm{\EE[\overline{\nabla}F|x]}^2_2]\nonumber\\
	&&=&\EE[\norm{\overline{\nabla}F}^2_2]
	-2\EE[\norm{\EE[\overline{\nabla}F|x]}^2_2]
	+\EE[\norm{\EE[\overline{\nabla}F|x]}^2_2]\nonumber\\
	&&=&\EE[\norm{\overline{\nabla}F}^2_2]
	-\EE[\norm{\EE[\overline{\nabla}F|x]}^2_2].\nonumber
	\end{alignat}
			
	Let $w_l:=x+z_l$ for $l=1,...,S$. Analyzing now $\EE[\norm{\overline{\nabla}F-\EE[\overline{\nabla}F|x]}^2_2]$,
	\begin{alignat}{6}	
	&&&\EE[\norm{\overline{\nabla}F-\EE[\overline{\nabla}F|x]}^2_2]\nonumber\\
	&&=&\EE[\sum_{j=1}^d(\overline{\nabla}_jF-\EE[\overline{\nabla}_jF|x])^2]\nonumber\\
	&&=&\EE[\sum_{j=1}^d(\frac{1}{S}\sum\limits_{l=1}^S(\widetilde{\nabla}_j 
	F(w_l,\xi_l)-\EE[\overline{\nabla}_jF|x]))^2]\nonumber\\
	&&=&\frac{1}{S^2}\sum_{j=1}^d\EE[(\sum\limits_{l=1}^S(\widetilde{\nabla}_j	
	F(w_l,\xi_l)-\EE[\overline{\nabla}_jF|x]))^2]\nonumber\\
	&&=&\frac{1}{S^2}\sum_{j=1}^d\EE(\EE[(\sum\limits_{l=1}^S(\widetilde{\nabla}_j 	
	F(w_l,\xi_l)-\EE[\overline{\nabla}_jF|x]))^2|x])\label{crossterms}\\
	&&=&\frac{1}{S^2}\sum_{j=1}^d\EE(\sum\limits_{l=1}^S\EE[(\widetilde{\nabla}_j
	F(w_l,\xi_l)-\EE[\overline{\nabla}_jF|x])^2|x]),\label{varind}
	\end{alignat}
	where \eqref{varind} holds since $\widetilde{\nabla}_j 
	F(w_l,\xi_l)-\EE[\overline{\nabla}_jF|x]$ for 
	$l=1,...,S$ are conditionally independent random variables with conditional expectation of zero with respect to $x$: Considering the cross terms of $\EE[(\sum\limits_{l=1}^S(\widetilde{\nabla}_j 	
		F(w_l,\xi_l)-\EE[\overline{\nabla}_jF|x]))^2|x]$ in \eqref{crossterms} with $l\neq m$, 
	\begin{alignat}{6}
	&&&\EE[(\widetilde{\nabla}_j 	
			F(w_l,\xi_l)-\EE[\overline{\nabla}_jF|x])(\widetilde{\nabla}_j 	
					F(w_m,\xi_m)-\EE[\overline{\nabla}_jF|x])|x]\nonumber\\	
	&&=&\EE[\widetilde{\nabla}_jF(w_l,\xi_l) \widetilde{\nabla}_jF(w_m,\xi_m)|x]
	-\EE[\widetilde{\nabla}_jF(w_l,\xi_l)\EE[\overline{\nabla}_jF|x]|x]\nonumber\\
	&&-&\EE[\EE[\overline{\nabla}_jF|x]\widetilde{\nabla}_jF(w_m,\xi_m)|x]	
	+\EE[\EE[\overline{\nabla}_jF|x]^2|x]\nonumber\\
	&&=&\EE[\widetilde{\nabla}_jF(w_l,\xi_l)|x]\EE[\widetilde{\nabla}_jF(w_m,\xi_m)|x]
	-\EE[\widetilde{\nabla}_jF(w_l,\xi_l)|x]\EE[\overline{\nabla}_jF|x]\nonumber\\
	&&-&\EE[\overline{\nabla}_jF|x]\EE[\widetilde{\nabla}_jF(w_m,\xi_m)|x]	
	+\EE[\overline{\nabla}_jF|x]^2\nonumber\\		
	&&=&0.\nonumber
	\end{alignat}

	Continuing from \eqref{varind}, 	
	\begin{alignat}{6}
	&&&\frac{1}{S^2}\sum_{j=1}^d\EE(\sum\limits_{l=1}^S\EE[(\widetilde{\nabla}_j
	F(w_l,\xi_l)-\EE[\overline{\nabla}_jF|x])^2|x])\nonumber\\	
	&&=&\frac{1}{S^2}\sum_{j=1}^d\sum\limits_{l=1}^S\EE[(\widetilde{\nabla}_j 
	F(w_l,\xi_l)-\EE[\overline{\nabla}_jF|x])^2]\nonumber\\
	&&=&\frac{1}{S}\sum_{j=1}^d\EE[(\widetilde{\nabla}_j 
	F(w_l,\xi_l)-\EE[\overline{\nabla}_jF|x])^2],\label{eq:25}
	\end{alignat}
	where the last equality holds for any $l\in\{1,...,S\}$. Continuing from \eqref{eq:25},
	\begin{alignat}{6}
	&&&\frac{1}{S}\sum_{j=1}^d\EE[(\widetilde{\nabla}_j 
		F(w_l,\xi_l)-\EE[\overline{\nabla}_jF|x])^2]\nonumber\\
	&&=&\frac{1}{S}\sum_{j=1}^d\EE(\EE[(\widetilde{\nabla}_j 
					F(w_l,\xi_l)-\EE[\overline{\nabla}_jF|x])^2|x])\nonumber\\
	&&=&\frac{1}{S}\sum_{j=1}^d\EE(\EE[\widetilde{\nabla}_j 
					F(w_l,\xi_l)^2-2\widetilde{\nabla}_j 
										F(w_l,\xi_l)\EE[\overline{\nabla}_jF|x]+  \EE[\overline{\nabla}_jF|x]^2|x])\nonumber\\	
	&&=&\frac{1}{S}\sum_{j=1}^d\EE(\EE[\widetilde{\nabla}_j 
					F(w_l,\xi_l)^2|x]-2\EE[\widetilde{\nabla}_j 
										F(w_l,\xi_l)|x]\EE[\overline{\nabla}_jF|x]+  \EE[\overline{\nabla}_jF|x]^2)\nonumber\\
	&&=&\frac{1}{S}\sum_{j=1}^d\EE(\EE[\widetilde{\nabla}_j 
					F(w_l,\xi_l)^2|x]-2\EE[\overline{\nabla}_jF|x]^2+  \EE[\overline{\nabla}_jF|x]^2)\nonumber\\
	&&=&\frac{1}{S}\sum_{j=1}^d\EE(\EE[\widetilde{\nabla}_j 
					F(w_l,\xi_l)^2|x]-\EE[\overline{\nabla}_jF|x]^2)\nonumber\\
	&&=&\frac{1}{S}\sum_{j=1}^d\EE[\widetilde{\nabla}_j 
					F(w_l,\xi_l)^2]-\EE[\EE[\overline{\nabla}_jF|x]^2]\nonumber\\
	&&\leq&\frac{1}{S}\sum_{j=1}^d\EE[\widetilde{\nabla}_j 
					F(w_l,\xi_l)^2]\nonumber\\
	&&=&\frac{1}{S}\EE[\norm{\widetilde{\nabla} F(w_l,\xi_l)}^2_2]\nonumber\\
	&&\leq&\frac{Q}{S},\label{eq:22}
	\end{alignat}
	where the final inequality uses Lemma \ref{gradbound} and the 
	definition $Q:=\EE[C(\xi)^2]$: Similar to showing \eqref{tuffeq}, since $z_l$ and $\xi_l$ are independent of $x$, $\EE[\norm{\widetilde{\nabla} F(x+z_l,\xi_l)}^2_2|x]=g(x)$, where $g(y):=\EE[\norm{\widetilde{\nabla} F(y+z_l,\xi_l)}^2_2]$. By the absolute continuity of $z_l$, for all $y\in\RR^d$, $\norm{\widetilde{\nabla} F(y+z_l,\xi_l)}^2_2\leq C(\xi_l)^2$ for almost every $(z_l,\xi_l)$ from Lemma \ref{gradbound}, hence $g(y)\leq \EE[C(\xi)^2]$ for all $y\in\RR^d$, and in particular, $\EE[\norm{\widetilde{\nabla} F(w_l,\xi_l)}^2_2]=\EE[g(x)]\leq \EE[C(\xi)^2]$.
\end{proof}	
\begin{prop}
	\label{boundprop}	
	For $d\in\NN$,  
	$$\frac{\lambda(d)d!!}{(d-1)!!}\leq \sqrt{d}.$$	
\end{prop}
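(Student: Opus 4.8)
The plan is to treat the cases $d$ even and $d$ odd separately and, in each case, prove the bound by induction on the half-index. Recall that $\lambda(d)=\frac{2}{\pi}$ for $d$ even and $\lambda(d)=1$ for $d$ odd. Writing $d=2m+1$ (odd) or $d=2m$ (even), the quantity $\frac{d!!}{(d-1)!!}$ is either $\frac{(2m+1)!!}{(2m)!!}$ or $\frac{(2m)!!}{(2m-1)!!}$, and each of these gains exactly one multiplicative factor when $m$ is incremented, which is what makes an inductive argument natural.

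For $d$ odd, put $b_m:=\frac{(2m+1)!!}{(2m)!!}$, so the claim is $b_m\leq\sqrt{2m+1}$. The base case $m=0$ gives $b_0=1=\sqrt{1}$. For the step, $b_{m+1}=b_m\cdot\frac{2m+3}{2m+2}$, so by the induction hypothesis it suffices that $\sqrt{2m+1}\cdot\frac{2m+3}{2m+2}\leq\sqrt{2m+3}$; squaring and clearing denominators turns this into $(2m+1)(2m+3)\leq(2m+2)^2$, i.e.\ $4m^2+8m+3\leq 4m^2+8m+4$, which is immediate. This is just the AM--GM inequality $\sqrt{k(k+2)}\leq k+1$ with $k=2m+1$.

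For $d$ even, put $c_m:=\frac{2}{\pi}\cdot\frac{(2m)!!}{(2m-1)!!}$, so the claim is $c_m\leq\sqrt{2m}$. The base case $m=1$ reads $\frac{4}{\pi}\leq\sqrt{2}$, equivalently $\pi^2\geq 8$, which holds. For the step, $c_{m+1}=c_m\cdot\frac{2m+2}{2m+1}$, and by the induction hypothesis it suffices that $\sqrt{2m}\cdot\frac{2m+2}{2m+1}\leq\sqrt{2m+2}$, i.e.\ after squaring $2m(2m+2)\leq(2m+1)^2$, i.e.\ $4m^2+4m\leq 4m^2+4m+1$ --- again immediate.

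There is essentially no obstacle here: the only mildly non-obvious point is that in each inductive step one should first use the hypothesis to bound the previous term and only then multiply by the new factor, which collapses everything to an elementary quadratic inequality; and the even-case base step uses the numerical fact $\pi^2>8$, the sole place a transcendental constant is invoked.
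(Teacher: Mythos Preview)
Your proof is correct and is essentially the same argument as the paper's: both use induction in steps of two on $d$ (you phrase it as induction on the half-index $m$ within each parity class, the paper phrases it as passing from $d-1$ to $d+1$, exploiting $\lambda(d+1)=\lambda(d-1)$), with identical base cases and the same elementary inequality $(d-1)(d+1)\le d^2$ driving the inductive step. The only difference is presentational---you split the parities explicitly, the paper unifies them---but the content coincides.
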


\begin{proof}
	For $d=1$, $\frac{\lambda(d)d!!}{(d-1)!!}=1$, and for $d=2$, 
	$\frac{\lambda(d)d!!}{(d-1)!!}=\frac{4}{\pi}<\sqrt{2}$. For $d\geq 2$, we will show that the result holds for 
	$d+1$ assuming that it holds for $d-1$, proving the result by induction.
	\begin{alignat}{6}
	&\frac{\lambda(d+1)(d+1)!!}{d!!}&&=\frac{\lambda(d-1)(d+1)(d-1)!!}{d(d-2)!!}\nonumber\\
	&&&=\frac{\lambda(d-1)(d-1)!!}{(d-2)!!}\frac{(d+1)}{d}\nonumber\\
	&&&\leq\sqrt{d-1}\frac{(d+1)}{d}\nonumber\\
	&&&=\sqrt{\frac{(d-1)(d+1)^2}{d^2}}\nonumber\\
	&&&=\sqrt{\frac{d^3+d^2-d-1}{d^2}}\nonumber\\
	&&&<\sqrt{d+1}.\nonumber					
	\end{alignat}	  
\end{proof}

\begin{customcorollary}{\ref{ssdcomp}}
	For $\beta\in(0,1)$, an expected $(\epsilon_1,\epsilon_2)$-stationary point \eqref{eps} can be 
	computed with 
	$O\left(\max\left(\epsilon_1^{\frac{\beta-2}{\beta}},\epsilon_2^{-2\frac{2-\beta}{1-\beta}}\right)\right)$
	gradient calls.
\end{customcorollary}

\begin{proof}	
	From Theorem \ref{SGDF}, $\sigma=\theta\sqrt{d}K^{-\beta}$, and requiring 
	$\sigma \leq \epsilon_1$ implies
	\begin{alignat}{6}
	\left(\frac{\theta\sqrt{d}}{\epsilon_1}\right)^{\frac{1}{\beta}}\leq K.\nonumber
	\end{alignat}	
	Taking $K_{\epsilon_1}=\left\lceil\left(\frac{\theta\sqrt{d}}{\epsilon_1}\right)^{\frac{1}{\beta}}\right\rceil<\left(\frac{\theta\sqrt{d}}{\epsilon_1}\right)^{\frac{1}{\beta}}+1$ and $S_{\epsilon_1}=\lceil K_{\epsilon_1}^{1-\beta}\rceil<K_{\epsilon_1}^{1-\beta}+1$, 
	\begin{alignat}{6}
	S_{\epsilon_1}&&<&\left(\left(\frac{\theta\sqrt{d}}{\epsilon_1}\right)^{\frac{1}{\beta}}+1\right)^{1-\beta}+1\nonumber\\
	&&<&\left(\frac{\theta\sqrt{d}}{\epsilon_1}\right)^{\frac{1-\beta}{\beta}}+2,\nonumber
	\end{alignat}
	where the second inequality follows from a general result for $a_i>0$ for $i=1,...,n$ and $\beta\in(0,1)$:
	$(\sum_{i=1}^na_i)^{1-\beta}=\frac{\sum_{i=1}^na_i}{(\sum_{i=1}^na_i)^{\beta}}<\sum_{i=1}^n\frac{a_i}{a_i^{\beta}}=\sum_{i=1}^na_i^{1-\beta}$.
	An upper bound on the total number of gradient calls required to satisfy $\epsilon_1$, considering up to $K_{\epsilon_1}-1$ iterations of PISGD is then 
	\begin{alignat}{6}
	(K_{\epsilon_1}-1)S_{\epsilon_1}&&<&\left(\frac{\theta\sqrt{d}}{\epsilon_1}\right)^{\frac{1}{\beta}}\left(
	\left(\frac{\theta\sqrt{d}}{\epsilon_1}\right)^{\frac{1-\beta}{\beta}}+2\right)\nonumber\\
	&&=&O\left(\epsilon_1^{\frac{\beta-2}{\beta}}\right).\nonumber
	\end{alignat}	
	Choosing $K$ such that	
	\begin{alignat}{6}
	\EE[\dist(0,\partial_{\sigma}f(x^R))]&&<&	
	K^{\frac{\beta-1}{2}}\sqrt{2\left(\frac{L_0}{\theta}\Delta+L_0^2\sqrt{d} 
	K^{-\beta}+Q\right)}\nonumber\\
	&&\leq &	
	K^{\frac{\beta-1}{2}}\sqrt{2\left(\frac{L_0}{\theta}\Delta+L_0^2\sqrt{d}+Q\right)}\nonumber\\
	&&\leq&\epsilon_2\nonumber
	\end{alignat}
	gives the bound 
	\begin{alignat}{6}
	\left(\frac{2}{\epsilon^2_2}(\frac{L_0}{\theta}\Delta+L_0^2\sqrt{d}+Q)\right)^{\frac{1}{1-\beta}}\leq K.\nonumber
	\end{alignat}
	Taking  $K_{\epsilon_2}=\left\lceil\left(\frac{2}{\epsilon^2_2}(\frac{L_0}{\theta}\Delta+L_0^2\sqrt{d}+Q)\right)^{\frac{1}{1-\beta}}\right\rceil$ and $S_{\epsilon_2}=\lceil K_{\epsilon_2}^{1-\beta}\rceil$, 
	\begin{alignat}{6}
	S_{\epsilon_2}&&<&\left(\left(\frac{2}{\epsilon^2_2}(\frac{L_0}{\theta}\Delta+L_0^2\sqrt{d}+Q)\right)^{\frac{1}{1-\beta}}+1\right)^{1-\beta}+1\nonumber\\
	&&<&\frac{2}{\epsilon^2_2}(\frac{L_0}{\theta}\Delta+L_0^2\sqrt{d}+Q)+2,\nonumber
	\end{alignat}
	and the number of gradient calls required to satisfy $\epsilon_2$ is bounded by
	\begin{alignat}{6}
	(K_{\epsilon_2}-1)S_{\epsilon_2}&&<&\left(\frac{2}{\epsilon^2_2}(\frac{L_0}{\theta}\Delta+L_0^2\sqrt{d}+Q)\right)^{\frac{1}{1-\beta}}\left(\frac{2}{\epsilon^2_2}(\frac{L_0}{\theta}\Delta+L_0^2\sqrt{d}+Q)+2\right)\nonumber\\
	&&=&O\left(\epsilon_2^{-2\frac{2-\beta}{1-\beta}}\right).\nonumber
	\end{alignat}
	The number of gradient calls required to satisfy both $\epsilon_1$ and $\epsilon_2$ is then
	\begin{alignat}{6}
	\max((K_{\epsilon_1}-1)S_{\epsilon_1},(K_{\epsilon_2}-1)S_{\epsilon_2})=O\left(\max\left(\epsilon_1^{\frac{\beta-2}{\beta}},\epsilon_2^{-2\frac{2-\beta}{1-\beta}}\right)\right).\nonumber
	\end{alignat}	
\end{proof}

\begin{customcorollary}{\ref{ssdcomp2}}	
Assume $\epsilon_2<L_0$, $\theta=1$, and let 
\begin{alignat}{6}
K^*&=&\max\left(\left\lfloor \frac{2}{\epsilon^2_2}\left(
L_0\Delta+Q+\sqrt{d}L_0^2\right)+1\right\rfloor,\left\lceil \frac{2\sqrt{d}}{\epsilon^2_2}\left(
\frac{L_0\Delta+Q}{\epsilon_1}+L_0^2\right)\right\rceil\right)\nonumber
\end{alignat}
and  
\begin{alignat}{6}  
\beta^*=\frac{\log(K^*\epsilon^2_2-2\sqrt{d}L_0^2)-\log(2(L_0\Delta+Q))}{\log(K^*)},\nonumber
\end{alignat}
then 
\begin{enumerate}
	\item an expected $(\epsilon_1,\epsilon_2)$-stationary point \eqref{eps} can be 
	computed with 	
	$O\left(\frac{1}{\epsilon_1\epsilon^4_2}\right)$ gradient calls, 
	\item $(K^*,\beta^*)$ has the same gradient call complexity as a minimizer of the number of gradient calls required to find an expected 
	$(\epsilon_1,\epsilon_2)$-stationary point using inequality \eqref{bound} of Theorem \ref{SGDF}, and
	\item $(K^*,\beta^*)$ minimizes the number of iterations 
	required to find an expected $(\epsilon_1,\epsilon_2)$-stationary point using inequality \eqref{bound} of Theorem \ref{SGDF}.
\end{enumerate}
\end{customcorollary}

\begin{proof}
	Based on Theorem 
	\ref{SGDF}, an optimal choice for $K\in \ZZ_{>0}$ and $\beta\in (0,1)$ can be written as the following optimization 
	problem for the minimization 
	of the number of gradient calls, 
	\begin{alignat}{6}
	\min\limits_{K, \beta}&\text{ }&&(K-1)\lceil K^{1-\beta}\rceil\label{eps2eq2}\\
	\st&&&\sqrt{d}K^{-\beta}\leq \epsilon_1\nonumber\\
	&&&K^{\frac{\beta-1}{2}}\sqrt{2\left(L_0\Delta+L_0^2\sqrt{d}
		K^{-\beta}+Q\right)}\leq \epsilon_2\nonumber\\
	&&&K\in\ZZ_{>0},\quad \beta\in(0,1),\nonumber 
	\end{alignat}
	requiring $\sigma\leq \epsilon_1$ and the right-hand side of \eqref{bound} to be less than or 
	equal to $\epsilon_2$. 	Rearranging the inequalities and adding the valid inequality $1\leq 
	K^{\beta}$ given the 
	constraints 
	on $K$ and $\beta$, \eqref{eps2eq2} can be rewritten as 
	\begin{alignat}{6}
	\min\limits_{K, \beta}&\text{ }&&(K-1)\lceil K^{1-\beta}\rceil\label{trueobj}\\
	\st&&&\max\left(1,\frac{\sqrt{d}}{ \epsilon_1}\right)\leq K^{\beta}\nonumber\\
	&&&K^{\beta}\leq \frac{K\epsilon^2_2-2L_0^2\sqrt{d}}{2(L_0\Delta+Q)}\nonumber\\
	&&&K\in\ZZ_{>0},\quad \beta\in(0,1).\nonumber 
	\end{alignat}
	It is first shown that $K^*$ is a lower bound for a feasible $K$ to problem \eqref{trueobj}.
	For the case $\epsilon_1<\sqrt{d}$, minimizing the 
	gap between $\frac{\sqrt{d}}{ \epsilon_1}$ and 
	$\frac{K\epsilon^2_2-2L_0^2\sqrt{d}}{2(L_0\Delta+Q)}$ sets $K$ equal to 
	\begin{alignat}{6}
	K^*_l&=&\left\lceil \frac{2\sqrt{d}}{\epsilon^2_2}\left(
	\frac{L_0\Delta+Q}{\epsilon_1}+L_0^2\right)\right\rceil,\nonumber
	\end{alignat}
	i.e. $K^*_l$ is the minimum $K\in\ZZ_{>0}$ such that $\frac{\sqrt{d}}{ 
		\epsilon_1}\leq\frac{K\epsilon^2_2-2L_0^2\sqrt{d}}{2(L_0\Delta+Q)}$.
	When $\epsilon_1\geq\sqrt{d}$, minimizing the gap between $1$ and 
	$\frac{K\epsilon^2_2-2L_0^2\sqrt{d}}{2(L_0\Delta+Q)}$ requires 
	\begin{alignat}{6}
	&K&\geq&\frac{2}{\epsilon^2_2}\left(L_0\Delta+Q+\sqrt{d}L_0^2\right)\nonumber\\
	&&>&2\sqrt{d},\nonumber
	\end{alignat}
	given that $\epsilon_2<L_0$. 
	A valid lower bound on $K$ equals   
	\begin{alignat}{6}
	K^*_g&=&\left\lfloor\frac{2}{\epsilon^2_2}\left(L_0\Delta+Q+\sqrt{d}L_0^2\right)+1\right\rfloor.\nonumber
	\end{alignat}
	The use of $\lfloor \cdot +1 \rfloor$ instead of $\lceil\cdot\rceil$ in this case takes into 
	account 
	of the possibility that $\frac{2}{\epsilon^2_2}\left(
	L_0\Delta+Q+L_0^2\sqrt{d}\right)\in \ZZ_{>0}$. Trying to use 
	$K=\left\lceil\frac{2}{\epsilon^2_2}\left(L_0\Delta+Q+L_0^2\sqrt{d}\right)\right\rceil$ would set 
	$\frac{K\epsilon^2_2-2L_0^2\sqrt{d}}{2(L_0\Delta+Q)}=1$, which would imply 
	$\beta=0$ as $K>2\sqrt{d}$. Comparing $K^*_l$ and $K^*_g$, for the case when $\epsilon_1<\sqrt{d}$, since
	$K^*_l>\frac{2}{\epsilon^2_2}\left(L_0\Delta+Q+\sqrt{d}L_0^2\right)$, it follows that $K^*_l\geq 
	K^*_g$, and when $\epsilon_1\geq\sqrt{d}$,
	\begin{alignat}{6}
	&K^*_g&\geq&\left\lfloor\frac{2\sqrt{d}}{\epsilon^2_2}\left(
	\frac{L_0\Delta+Q}{\epsilon_1}+L_0^2\right)+1\right\rfloor\nonumber\\
	&&\geq&K^*_l,\nonumber
	\end{alignat}
	hence $K^*$ is a valid lower bound for the number of iterations over all values of $\epsilon_1$.\\ 
	
	For any fixed $K$, the 
	objective of \eqref{trueobj} is minimized by maximizing $\beta$, so we would want to set $\beta=\beta^*_K$ such 
	that 
	$$K^{\beta^*_K}=\frac{K\epsilon^2_2-2L_0^2\sqrt{d}}{2(L_0\Delta+Q)},$$
	which equals    
	\begin{alignat}{6}  
	\beta^*_K=\frac{\log(K\epsilon^2_2-2L_0^2\sqrt{d})-\log(2(L_0\Delta+Q))}{\log(K)}.\nonumber
	\end{alignat} 
	We will show the validity of this choice of $\beta$ for all $K\geq K^*_g$, implying the validity 
	for 
	all $K\geq 
	K^*$. For any $K\geq K^*_g>2\sqrt{d}$, the division by $\log(K)$ in $\beta^*_K$ is defined.
	We now verify that $\beta^*_K\in (0,1)$ for $K\geq K^*_g$. To show that $\beta^*_K<1$, 
	isolating $\epsilon_2^2$, we 
	require 
	\begin{alignat}{6}  
	\epsilon^2_2< 2(L_0\Delta+Q)+\frac{2L_0^2\sqrt{d}}{K},\nonumber
	\end{alignat} 	
	which holds given that $Q\geq L_0^2$ by Jensen's inequality and $\epsilon_2<L_0$. The bound 
	$\beta^*_K>0$ is equivalent to  
	\begin{alignat}{6}  
	0<K\epsilon^2_2-2L_0^2\sqrt{d}-2(L_0\Delta+Q).\nonumber
	\end{alignat} 
	For $K\geq K^*_g$,
	\begin{alignat}{6}
	&&&K\epsilon^2_2-2L_0^2\sqrt{d}-2(L_0\Delta+Q)\nonumber\\
	&&>&2\left(
	L_0\Delta+Q+L_0^2\sqrt{d}\right)-2L_0^2\sqrt{d}-2(L_0\Delta+Q)\nonumber\\
	&&\geq&0,\nonumber
	\end{alignat}
	hence for all $K\geq K^*$, $\beta^*_K$ is feasible. This also proves that $K^*$ is the minimum 
	feasible value for $K$, with $\beta^*=\beta^*_{K^*}$, proving statement 3.\\
	
	We now consider the minimization of a relaxation of \eqref{trueobj}, allowing the number of samples $S\in\RR$. As statements 1 and 2 concern the computational complexity of $(K^*,\beta^*)$ we will also now assume $\epsilon_1<1$ for simplicity. 
	\begin{alignat}{6}
	\min\limits_{K, \beta}&\text{ }&&(K-1)K^{1-\beta}\label{relobj}\\
	\st&&&\frac{\sqrt{d}}{ \epsilon_1}\leq K^{\beta}\nonumber\\
	&&&K^{\beta}\leq \frac{K\epsilon^2_2-2L_0^2\sqrt{d}}{2(L_0\Delta+Q)}\nonumber\\
	&&&\beta\in(0,1),\quad K\in\ZZ_{>0},\nonumber 
	\end{alignat}	
	Plugging in $K^{\beta^*_K}=\frac{K\epsilon^2_2-2L_0^2\sqrt{d}}{2(L_0\Delta+Q)}$, the 
	optimization 
	problem becomes
	\begin{alignat}{6}
	\min\limits_{K\in\ZZ_{>0}}&\text{ }&&(K-1) 
	\frac{K2(L_0\Delta+Q)}{K\epsilon^2_2-2L_0^2\sqrt{d}}\nonumber\\
	\st&&&K\geq K^*.\nonumber
	\end{alignat}
	For simplicity 
	let $$a:=2(L_0\Delta+Q)\quad\text{and}\quad b:=2L_0^2\sqrt{d}.$$
	The problem is now
	\begin{alignat}{6}
	\min\limits_{K\in\ZZ_{>0}}&\text{ }&&\frac{aK(K-1)}{\epsilon^2_2K-b}\label{relx}\\
	\st&&&K\geq K^*.\nonumber
	\end{alignat}
	We will prove that $K^*$ is optimal for problem \eqref{relx} by showing that the derivative of the objective function with respect to $K$ is positive for $K\geq K^*$.
	\begin{alignat}{6}
	&\frac{d}{dK}\frac{aK(K-1)}{\epsilon^2_2K-b}=\frac{a(\epsilon^2_2K^2-2bK+b)}{(\epsilon^2_2K-b)^2}\nonumber
	\end{alignat}
	is non-negative for $K\geq \frac{b+\sqrt{b(b-\epsilon^2_2)}}{\epsilon^2_2}$ and positive for 
	$K\geq 
	\frac{2b}{\epsilon^2_2}$ by removing $\epsilon^2_2$ in the numerator. Written in full 
	form, the objective is increasing in $K$ for  
	\begin{alignat}{6}
	K&&\geq&\frac{4L_0^2\sqrt{d}}{\epsilon^2_2}.\nonumber
	\end{alignat}	
	Comparing this inequality with $K^*=K^*_l$ given that $\epsilon_1<1$, 	
	\begin{alignat}{6}
	K^*&>\frac{2\sqrt{d}}{\epsilon^2_2}\left(L_0\Delta+Q+L_0^2\right)\nonumber\\
	&\geq\frac{2\sqrt{d}}{\epsilon^2_2}\left(Q+L_0^2\right)\nonumber\\
	&\geq\frac{4L_0^2\sqrt{d}}{\epsilon^2_2},\nonumber
	\end{alignat}	
	using $Q\geq L_0^2$, hence over the feasible $K\geq K^*$, the objective \eqref{relx} is 
	increasing, 
	and 
	$(K^*,\beta^*)$ is an optimal solution of \eqref{relobj}.\\ 
	
	Writing $K^*=K^*_l=\left\lceil\frac{1}{\epsilon^2_2}\left(
	\frac{a\sqrt{d}}{\epsilon_1}+b\right)\right\rceil$, a bound on the optimal value of the relaxed problem \eqref{relx} gives 
	\begin{alignat}{6}
	&&&(K^*-1)\left(\frac{aK^*}{\epsilon^2_2K^*-b}\right)\nonumber\\
	&&<&\frac{1}{\epsilon^2_2}\left(
	\frac{a\sqrt{d}}{\epsilon_1}+b\right)\left(\frac{aK^*}{\epsilon^2_2K^*-b}\right)\nonumber\\
	&&\leq&\frac{1}{\epsilon^2_2}\left(
	\frac{a\sqrt{d}}{\epsilon_1}+b\right)\left(\frac{\frac{a}{\epsilon^2_2}\left(
		\frac{a\sqrt{d}}{\epsilon_1}+b\right)}{\left(
		\frac{a\sqrt{d}}{\epsilon_1}+b\right)-b}\right)\nonumber\\
	&&=&\frac{1}{\epsilon^4_2}\left(\frac{a\sqrt{d}}{\epsilon_1}+b\right)\left(a+\frac{\epsilon_1b}{\sqrt{d}}\right)\nonumber\\
	&&=&O\left(\frac{1}{\epsilon_1\epsilon^4_2}\right),\nonumber
	\end{alignat}
	where for the second inequality $\frac{aK}{\epsilon^2_2K-b}$ is decreasing in $K$, $\frac{d}{dK}\frac{aK}{\epsilon^2_2K-b}=\frac{-ab}{(\epsilon^2_2K-b)^2}$. This bound cannot be improved as 
	$(K^*-1)\left(\frac{aK^*}{\epsilon^2_2K^*-b}\right)\geq\left(\frac{1}{\epsilon^2_2}\left(
		\frac{a\sqrt{d}}{\epsilon_1}+b\right)-1\right)\left(\frac{a}{\epsilon^2_2}\right)=
	O\left(\frac{1}{\epsilon_1\epsilon^4_2}\right)$. A bound on the gradient call complexity of 
	$(K^*,\beta^*)$ for finding an expected $(\epsilon_1,\epsilon_2)$-stationary point is then  
	$(K^*-1)\left\lceil\frac{aK^*}{\epsilon^2_2K^*-b}\right\rceil<(K^*-1)\left(\frac{aK^*}{\epsilon^2_2K^*-b}+1\right)
	=O\left(\frac{1}{\epsilon_1\epsilon^4_2}\right)$, proving statement 1.\\ 
		
	Let $(\hat{K},\hat{\beta})$ be an optimal 
	solution to the original problem \eqref{trueobj} with $\epsilon_1<1$. The inequalities 
	$$(K^*-1)\frac{aK^*}{\epsilon^2_2K^*-b}\leq (\hat{K}-1)\lceil \hat{K}^{1-\hat{\beta}}\rceil\leq 
	(K^*-1)\left\lceil\frac{aK^*}{\epsilon^2_2K^*-b}\right\rceil$$
	hold since restricting $S\in\ZZ_{>0}$ cannot improve the optimal 
	objective value of \eqref{relobj}, and by the optimality of $(\hat{K},\hat{\beta})$ for problem 
	\eqref{trueobj}, respectively. This proves that using $(\hat{K},\hat{\beta})$ will result in a 
	gradient call complexity of $O\left(\frac{1}{\epsilon_1\epsilon^4_2}\right)$, proving 
	statement 2. 
\end{proof}

\begin{customcorollary}{\ref{ssdcomp3}}	
Let $c\in(0,1)$ and $\phi>1$ be arbitrary constants. For any $\gamma\in(0,1)$ and $\epsilon_2<L_0$, let PISGD be run $\R:=\lceil-\ln(c\gamma)\rceil$ times using the parameter settings of Theorem \ref{SGDF} with $\theta=1$, 
\begin{alignat}{6}
K&=&\max\left(\left\lfloor \frac{2}{(\epsilon'_2)^2}\left(
L_0\Delta+Q+\sqrt{d}L_0^2\right)+1\right\rfloor,\left\lceil \frac{2\sqrt{d}}{(\epsilon'_2)^2}\left(
\frac{L_0\Delta+Q}{\epsilon_1}+L_0^2\right)\right\rceil\right),\nonumber
\end{alignat}
and  
\begin{alignat}{6}  
\beta=\frac{\log(K(\epsilon'_2)^2-2\sqrt{d}L_0^2)-\log(2(L_0\Delta+Q))}{\log(K)},\nonumber
\end{alignat}
where $\epsilon_2'=\sqrt{\frac{\epsilon^2_2-6\psi\frac{Q}{T}}{4e}}$, $\psi=\frac{\lceil-\ln(c\gamma)\rceil}{(1-c)\gamma}$, and $T=\lceil6\phi\psi\frac{Q}{\epsilon^2_2}\rceil$, outputting candidate solutions $\overline{X}:=\{\bar{x}^1,...,\bar{x}^{\R}\}$. With $T$ samples $\{(z_1,\xi_1),...,(z_{T},\xi_{T})\}$, where $z_i\sim U(B(\sigma))$ and $\xi_i\sim P_{\xi}$ for $i=1,...,T$, let $\bar{x}^*\in\overline{X}$ be chosen such that for $\overline{\nabla} F_{T}(x):=\frac{1}{T}\sum_{t=1}^{T}\widetilde{\nabla}F(x+z_t,\xi_t)$,
\begin{alignat}{6}
\bar{x}^*\in\argmin\limits_{x\in\overline{X}}||\overline{\nabla} F_{T}(x)||_2.\nonumber
\end{alignat} 
It follows that 
\begin{enumerate}
	\item $\bar{x}^*$ is an $(\epsilon_1,\epsilon_2)$-stationary point with a probability of at least $1-\gamma$, and 
	\item the described method requires $\tilde{O}\left(\frac{1}{\epsilon_1\epsilon^4_2}+\frac{1}{\gamma\epsilon^2_2}\right)$ gradient calls.
\end{enumerate}
\end{customcorollary}
\begin{proof}
Let $\overline{\nabla} f(x):=\EE[\widetilde{\nabla} f(x+z)|x]$ for $z\sim U(B(\sigma))$. Following \cite[Eq. 2.28]{ghadimi2013} for the first inequality,\footnote{The derivation of this bound is independent of how $\overline{\nabla} f(\cdot)$ and $\overline{\nabla} F_T(\cdot)$ are defined.} 
\begin{alignat}{6}
||\overline{\nabla} f(\bar{x}^*)||^2_2\leq &4\min_{i=1,...,\R}||\overline{\nabla} f(\bar{x}^i)||^2_2+
4\max_{i=1,...,\R}||\overline{\nabla} F_T(\bar{x}^i)-\overline{\nabla} f(\bar{x}^i)||^2_2+2||\overline{\nabla} F_T(\bar{x}^*)-\overline{\nabla} f(\bar{x}^*)||^2_2\nonumber\\
\leq &4\min_{i=1,...,\R}||\overline{\nabla} f(\bar{x}^i)||^2_2+
6\max_{i=1,...,\R}||\overline{\nabla} F_T(\bar{x}^i)-\overline{\nabla} f(\bar{x}^i)||^2_2,\label{probbound}
\end{alignat}
where the second inequality holds since $2||\overline{\nabla} F_T(\bar{x}^*)-\overline{\nabla} f(\bar{x}^*)||^2_2\leq2\max_{i=1,...,\R}$ $||\overline{\nabla} F_T(\bar{x}^i)-\overline{\nabla} f(\bar{x}^i)||^2_2$. We will compute an upper bound in probability of the left-hand side of \eqref{probbound} using the two terms of the right-hand side. Let $B$ equal the right-hand side of \eqref{bound},
\begin{alignat}{6}
B^2=2K^{\beta-1}\left(L_0\Delta+L_0^2\sqrt{d} 
	K^{-\beta}+Q\right).\label{betasq}
\end{alignat}
Since $\overline{\nabla} f(x)=\EE[\frac{1}{S'}\sum_{l=1}^{S'}\widetilde{\nabla} f(x+z_l)|x]$ for any number of samples $S'\in \ZZ_{>0}$ of $z$, using inequalities \eqref{frhs} and \eqref{hpineq} of the proof of Theorem \ref{SGDF}, for all $i\in\{1,...,\R\}$,
\begin{alignat}{6}
\EE[||\overline{\nabla} f(\bar{x}^i)||^2_2]\leq B^2.\nonumber
\end{alignat}
From Markov's inequality, 
\begin{alignat}{6}
\PP(4\min_{i=1,...,\R}||\overline{\nabla} f(\bar{x}^i)||^2_2\geq 4e B^2)=\Pi_{i=1}^{\R}\PP(4||\overline{\nabla} f(\bar{x}^i)||^2_2\geq4e B^2)\leq e^{-\R}.\nonumber
\end{alignat}
Given that also $\overline{\nabla} f(x)=\EE[\frac{1}{T'}\sum_{l=1}^{T'}\widetilde{\nabla} F(x+z_l,\xi_l)|x]$ for any number of samples $T'\in \ZZ_{>0}$ of $z$ and $\xi$, $\EE[||\overline{\nabla} F_T(\bar{x}^i)-\overline{\nabla} f(\bar{x}^i)||^2_2]\leq \frac{Q}{T}$ from Lemma \ref{stobound}. For $\psi>0$, it holds that
\begin{alignat}{6}
&&&\PP\bigg(6\max_{i=1,...,\R}||\overline{\nabla} F_T(\bar{x}^i)-\overline{\nabla} f(\bar{x}^i)||^2_2\geq 6\psi \frac{Q}{T}\bigg)\nonumber\\
=&&&\PP\bigg(\bigcup_{i=1}^\R \bigg\{6||\overline{\nabla} F_T(\bar{x}^i)-\overline{\nabla} f(\bar{x}^i)||^2_2\geq 6\psi \frac{Q}{T}\bigg\}\bigg)\nonumber\\
\leq&&&\sum_{i=1}^{\R}\PP(6||\overline{\nabla} F_T(\bar{x}^i)-\overline{\nabla} f(\bar{x}^i)||^2_2\geq 6\psi \frac{Q}{T})\nonumber\\
\leq&&&\sum_{i=1}^{\R}\frac{1}{\psi}=\frac{\R}{\psi},\nonumber
\end{alignat}
using Boole's and Markov's inequalities for the first and second inequalities, respectively. 
Using the fact that $\overline{\nabla} f(\bar{x}^*)\in \partial_{\sigma}f(\bar{x}^*)$ for the first inequality,
\begin{alignat}{6}
&&&\PP\bigg(\dist(0,\partial_{\sigma}f(\bar{x}^*))^2\geq 4e B^2+6\psi\frac{Q}{T}\bigg)\nonumber\\
&\leq&&\PP\bigg(||\overline{\nabla} f(\bar{x}^*)||^2_2\geq 4e B^2+6\psi\frac{Q}{T}\bigg)\nonumber\\
&\leq&&\PP\bigg(
4\min_{i=1,...,\R}||\overline{\nabla} f(\bar{x}^i)||^2_2+
6\max_{i=1,...,\R}||\overline{\nabla} F_T(\bar{x}^i)-\overline{\nabla} f(\bar{x}^i)||^2_2
\geq 4e B^2+6\psi\frac{Q}{T}\bigg)\nonumber\\
&\leq&&\PP\bigg(\{4\min_{i=1,...,\R}||\overline{\nabla} f(\bar{x}^i)||^2_2\geq 4e B^2\}\cup\{6\max_{i=1,...,\R}||\overline{\nabla} F_T(\bar{x}^i)-\overline{\nabla} f(\bar{x}^i)||^2_2\geq6\psi\frac{Q}{T}\}\bigg)\nonumber\\
&\leq&& e^{-\R}+\frac{\R}{\psi},\label{hiprobineq}
\end{alignat}
where the second inequality uses inequality \eqref{probbound}, and the third inequality holds given that the event of the left-hand side is a subset of the right-hand side: considering the contraposition, if 
\begin{alignat}{6}
&&&\{4\min_{i=1,...,\R}||\overline{\nabla} f(\bar{x}^i)||^2_2< 4e B^2\}\cap\{6\max_{i=1,...,\R}||\overline{\nabla} F_T(\bar{x}^i)-\overline{\nabla} f(\bar{x}^i)||^2_2<6\psi\frac{Q}{T}\}\nonumber
\end{alignat}
occurs then 
\begin{alignat}{6}
&&&
4\min_{i=1,...,\R}||\overline{\nabla} f(\bar{x}^i)||^2_2+
6\max_{i=1,...,\R}||\overline{\nabla} F_T(\bar{x}^i)-\overline{\nabla} f(\bar{x}^i)||^2_2< 4e B^2+6\psi\frac{Q}{T}.\nonumber
\end{alignat}
The final inequality holds using Boole's inequality, as the right-hand side is the sum of the derived upper bounds of the probabilities of the two events of the union.\\  

The total number of gradient calls required for computing 
$\overline{X}$ and then $\overline{\nabla} F_{T}(x)$ for all $x\in \overline{X}$ to find $\bar{x}^*$ is equal to $\R((K-1)\lceil K^{1-\beta}\rceil+T)$. We can write its minimization requiring that $\PP(\dist(0,\partial_{\epsilon_1}f(\bar{x}^*))> \epsilon_2)\leq \gamma$ using \eqref{hiprobineq} with \eqref{betasq} similarly to how \eqref{eps2eq2} was derived, as
	\begin{alignat}{6}
	\min\limits_{\substack{K, \beta,\\\R,T,\psi}}&\text{ }&&\R((K-1)\lceil K^{1-\beta}\rceil+T)\nonumber\\
	\st&&&\sqrt{d}K^{-\beta}\leq \epsilon_1\nonumber\\
	&&&8e K^{\beta-1}\left(L_0\Delta+L_0^2\sqrt{d} 
		K^{-\beta}+Q\right)+6\psi\frac{Q}{T}\leq \epsilon^2_2\label{eps2eq}\\
	&&&e^{-\R}+\frac{\R}{\psi}\leq \gamma\nonumber\\
	&&& K,\R,T\in\ZZ_{>0},\quad\beta\in(0,1),\quad \psi>0.\nonumber 
	\end{alignat}
Inequality \eqref{eps2eq} can be rewritten as
	\begin{alignat}{6}
	&&&2 K^{\beta-1}\left(L_0\Delta+L_0^2\sqrt{d} 
		K^{-\beta}+Q\right)\leq \frac{\epsilon^2_2-6\psi\frac{Q}{T}}{4e}.\nonumber
	\end{alignat}

We apply the choice of $K$ and $\beta$ from Corollary \ref{ssdcomp2} for finding an expected $(\epsilon_1,\epsilon_2')$-stationary point, where $\epsilon_2'=\sqrt{\frac{\epsilon^2_2-6\psi\frac{Q}{T}}{4e}}$:
\begin{alignat}{6}
K^*&=&\max\left(\left\lfloor \frac{2}{(\epsilon'_2)^2}\left(
L_0\Delta+Q+\sqrt{d}L_0^2\right)+1\right\rfloor,\left\lceil \frac{2\sqrt{d}}{(\epsilon'_2)^2}\left(
\frac{L_0\Delta+Q}{\epsilon_1}+L_0^2\right)\right\rceil\right)\nonumber
\end{alignat}
and  
\begin{alignat}{6}  
\beta^*=\frac{\log(K^*(\epsilon'_2)^2-2\sqrt{d}L_0^2)-\log(2(L_0\Delta+Q))}{\log(K^*)}.\nonumber
\end{alignat}
In order to ensure the validity of these choices for $K$ and $\beta$, we require that $0<\epsilon_2'<L_0$. The optimization problem then becomes
\begin{alignat}{6}
\min\limits_{\R,T,\psi}&\text{ }&&\R((K^*-1)\lceil (K^*)^{1-\beta^*}\rceil+T)\nonumber\\
\st&&&e^{-\R}+\frac{\R}{\psi}\leq \gamma\label{eq:opt1}\\
&&&\epsilon^2_2-6\psi\frac{Q}{T}\in(0,4e L^2_0)\label{eq:opt2}\\
&&&\R,T\in\ZZ_{>0},\quad \psi>0,\nonumber 
\end{alignat}
where \eqref{eq:opt2} ensures that $0<\epsilon_2'<L_0$. Choosing  $\R=\lceil-\ln(c\gamma)\rceil$ and $\psi=\frac{\lceil-\ln(c\gamma)\rceil}{(1-c)\gamma}$
for any $c\in(0,1)$ ensures that \eqref{eq:opt1} holds by satisfying the inequalities 
\begin{alignat}{6}
e^{-\R}\leq c\gamma\quad\text{and}\quad\frac{\R}{\psi}\leq (1-c)\gamma.\nonumber 
\end{alignat}
Choosing $T=\lceil6\phi\psi\frac{Q}{\epsilon^2_2}\rceil$ is feasible for \eqref{eq:opt2}:  
\begin{alignat}{6}
&&&4eL_0^2>\epsilon^2_2\geq\epsilon^2_2-6\psi\frac{Q}{T}=
   \epsilon^2_2-6\psi\frac{Q}{\lceil6\phi\psi\frac{Q}{\epsilon^2_2}\rceil}\geq\epsilon^2_2-6\psi\frac{Q}{6\phi\psi\frac{Q}{\epsilon^2_2}}=
   \epsilon^2_2-\frac{\epsilon^2_2}{\phi}>0,\label{longineq}
\end{alignat}
given the assumptions that $\epsilon_2<L_0$ and $\phi>1$. We have verified that the choices for $K$, $\beta$, $\R$, and $T$ ensure that the output $\bar{x}^*$ of the proposed method is an $(\epsilon_1,\epsilon_2)$-stationary point with a probability of at least $1-\gamma$. What remains is the computational complexity. 
The total number of gradient calls equals
\begin{alignat}{6}
\lceil-\ln(c\gamma)\rceil\bigg((K^*-1)\lceil (K^*)^{1-\beta^*}\rceil+\bigg\lceil6\phi\frac{\lceil-\ln(c\gamma)\rceil}{(1-c)\gamma}\frac{Q}{\epsilon^2_2}\bigg\rceil\bigg).\label{gradcalls}
\end{alignat}
From Corollary \ref{ssdcomp2}, 
$(K^*-1)\lceil (K^*)^{1-\beta^*}\rceil=O\left(\frac{1}{\epsilon_1(\epsilon_2')^4}\right)$  
and from \eqref{longineq} 
$\epsilon_2'=\sqrt{\frac{\epsilon^2_2-6\psi\frac{Q}{T}}{4e}}\geq\frac{\epsilon_2}{2}\sqrt{\frac{(1-\phi^{-1})}{e}}$, hence $(K^*-1)\lceil (K^*)^{1-\beta^*}\rceil=O\left(\frac{1}{\epsilon_1\epsilon_2^4}\right)$. The total computational complexity from \eqref{gradcalls} then equals
$\tilde{O}\left(\frac{1}{\epsilon_1\epsilon^4_2}+\frac{1}{\gamma\epsilon^2_2}\right)$.
\end{proof}

\subsection{Section \ref{num}}

\begin{customproperty}{\ref{nnl}}
	Each function $\LL(\alpha^3(H(W^3)\alpha^2(W^2v^i+b^2)+b^3),y^i)$ 
	is\\ $L_i:=2\max(\sqrt{N_2N_3}\norm{[(v^i)^T,1]}_2,\sqrt{(N_2m^2+1)})$-Lipschitz continuous.
\end{customproperty}

\begin{proof} 
	We will ultimately consider the decision variables in a vector form $\overline{w}:=[(\overline{w}^2)^T,(\overline{w}^3)^T]^T$, where $\overline{w}^l:=[W^l_1,b^l_1,W^l_2,b^l_2,...,W^l_{N_l},b^l_{N_l}]^T$ for $l=2,3$, where  $W^l_j$ is the $j^{th}$ row of $W^l$.\\
	
	The partial derivative of $\LL$ with respect to $z^3_j$ is 
	\begin{alignat}{6}
	\frac{\partial \LL}{\partial z^3_j}&=&&\alpha^3_j-y^i_j.\nonumber
	\end{alignat}	
	Given that $y^i$ is one-hot encoded, and the $\alpha^3_j$ take the form of 
	probabilities, 
	$\norm{\nabla_{z^3} \LL}_2\leq \sqrt{2}$, and $\LL$ as a function 
	of $z^3$ is $\sqrt{2}$-Lipschitz continuous. Considering $z^3=H(W^3)\alpha^{2}+b^3$ as a function of 
	$\overline{w}^3$, let 
	\begin{alignat}{6}
	&\overline{h}(\overline{w}^3)&&:=[H(W^3_1),b^3_1,H(W^3_2),b^3_2,...,H(W^3_{N_3}),b^3_{N_3}]^T\in\RR^{N_3(N_2+1)},\nonumber
	\end{alignat}  
	$\overline{\alpha}:=[(\alpha^2)^T,1]\in\RR^{N^2+1}$, $\z:=[0,...,0]\in\RR^{N^2+1}$, and the matrix 	
	$$A:=
	\begin{bmatrix}
	\overline{\alpha} &\z & \z & ... & \z\\
	\z & \overline{\alpha} & \z & ... & \z\\
	... & ... & ... & ... & ...\\
	\z & \z & \z & ... & \overline{\alpha}\\
	\end{bmatrix}
	\in\RR^{N_3\times(N_3(N_2+1))},$$
	so that $z^3(\overline{w}^3)=A\overline{h}(\overline{w}^3)$. The Lipschitz constant of $z^3(\overline{w}^3)$ is found by first
	bounding the spectral norm of $A$, which equals the square root of the largest 
	eigenvalue of 
	$$AA^T=\text{diag}(\norm{\overline{\alpha}}^2_2,\norm{\overline{\alpha}}^2_2,...])\in\RR^{N_3\times
		N_3},$$ 
	hence $\norm{A}_2=\norm{[(\alpha^2)^T,1]}_2\leq \sqrt{N_2m^2+1}$. The function 
	$\overline{h}(\overline{w}^3)$ is 1-Lipschitz continuous, and the composition of $L_i$-Lipschitz continuous 
	functions is 
	$\prod\limits_i L_i$-Lipschitz continuous 
	\cite[Claim 12.7]{shalev2014}, therefore $\LL$ is $\sqrt{2(N_2m^2+1)}$-Lipschitz continuous in 
	$\overline{w}^3$.\\
	
	Considering now $z^3$ as a function of $\alpha^2$, $z^3(\alpha^2)$ is 
	$\norm{H(W^3)}_2$-Lipschitz continuous. Given the boundedness of the hard tanh activation 
	function, $\norm{H(W^3)}_2\leq \norm{H(W^3)}_F\leq \sqrt{N_2N_3}$. The ReLU-m activation functions 
	are 1-Lipschitz continuous. As was done when computing a Lipschitz constant for $z^3(\overline{w}^3)$, to do so 
	for $z^2(\overline{w}^2)$, let $\overline{v}:=[(v^i)^T,1]$, redefine $\z:=[0,...,0]\in\RR^{N^1+1}$, and let 
	$$V:=
	\begin{bmatrix}
	\overline{v} &\z & \z & ... & \z\\
	\z & \overline{v} & \z & ... & \z\\
	... & ... & ... & ... & ...\\
	\z & \z & \z & ... & \overline{v}\\
	\end{bmatrix}\in\RR^{N_2\times(N_2(N_1+1))}.
	$$\\
	
	The Lipschitz constant for $z^2(\overline{w}^2)=V\overline{w}^2$ is then 
	$\norm{[(v^i)^T,1]}_2$. In summary, $\LL(z^3)$ is $\sqrt{2}$-Lipschitz, $z^3(\alpha^2)$ is 
	$\sqrt{N_2N_3}$-Lipschitz, $\alpha^2(z^2)$ is $1$-Lipschitz, and $z^2(\overline{w}^2)$ is 
	$||(v^i)^T,1||_2$-Lipschitz continuous, 
	hence $\LL$ is $\sqrt{2N_2N_3}||(v^i)^T,1||_2$-Lipschitz continuous in $\overline{w}^2$.\\
	
	Computing the Lipschitz constant for all decision variables,
	\begin{alignat}{6}
	&\norm{\LL(\overline{w})-\LL(\overline{w}')}_2\nonumber\\
	=&\norm{\LL(\overline{w}^2,\overline{w}^3)-L({\overline{w}^2}',\overline{w}^3)
	+L({\overline{w}^2}',\overline{w}^3)-\LL({\overline{w}^2}',{\overline{w}^3}')}_2\nonumber\\
	\leq&\norm{\LL(\overline{w}^2,\overline{w}^3)-\LL({\overline{w}^2}',\overline{w}^3)}_2
	+\norm{\LL({\overline{w}^2}',\overline{w}^3)-\LL({\overline{w}^2}',{\overline{w}^3}')}_2\nonumber\\
	\leq&\sqrt{2N_2N_3}\norm{(v^i)^T,1}_2\norm{\overline{w}^2-{\overline{w}^2}'}_2+\sqrt{2(N_2m^2+1)}\norm{\overline{w}^3-{\overline{w}^3}'}_2\nonumber\\
	\leq&\max(\sqrt{2N_2N_3}\norm{(v^i)^T,1}_2,\sqrt{2(N_2m^2+1)})(\norm{\overline{w}^2-{\overline{w}^2}'}_2+\norm{\overline{w}^3-{\overline{w}^3}'}_2)\nonumber\\
	\leq&2\max(\sqrt{N_2N_3}\norm{(v^i)^T,1}_2,\sqrt{(N_2m^2+1)})\norm{(\overline{w}^2,\overline{w}^3)-({\overline{w}^2}',{\overline{w}^3}')}_2,\nonumber
	\end{alignat} 	
	where the last inequality uses Young's inequality:
	\begin{alignat}{6}
	&&2\norm{\overline{w}^2-{\overline{w}^2}'}_2\norm{\overline{w}^3-{\overline{w}^3}'}_2
	&\leq&&\norm{\overline{w}^2-{\overline{w}^2}'}_2^2+\norm{\overline{w}^3-{\overline{w}^3}'}^2_2\nonumber\\
	\Longrightarrow&&\norm{\overline{w}^2-{\overline{w}^2}'}_2^2+2\norm{\overline{w}^2-{\overline{w}^2}'}_2\norm{\overline{w}^3-{\overline{w}^3}'}_2+\norm{\overline{w}^3-{\overline{w}^3}'}^2_2&\leq&& 2(\norm{\overline{w}^2-{\overline{w}^2}'}_2^2+\norm{\overline{w}^3-{\overline{w}^3}'}^2_2)\nonumber\\
	\Longrightarrow&&(\norm{\overline{w}^2-{\overline{w}^2}'}_2+\norm{\overline{w}^3-{\overline{w}^3}'}_2)^2&\leq&& 2(\norm{(\overline{w}^2,\overline{w}^3)-({\overline{w}^2}',{\overline{w}^3}')}_2^2)\nonumber\\
	\Longrightarrow&&\norm{\overline{w}^2-{\overline{w}^2}'}_2+\norm{\overline{w}^3-{\overline{w}^3}'}_2&\leq&&\sqrt{2} \norm{(\overline{w}^2,\overline{w}^3)-({\overline{w}^2}',{\overline{w}^3}')}_2.\nonumber
	\end{alignat}
\end{proof}

\begin{customproperty}{\ref{backprop}}
The approximate stochastic gradient $\widetilde{\nabla}\LL_i$, computed using the formulas \eqref{bp}, equals the gradient of $\LL_i$ with probability 1.
\end{customproperty}
\begin{proof}
The problematic terms within \eqref{bp} for which the chain rule does not necessarily apply are
\begin{alignat}{6}
	\frac{\partial H_{jk}}{\partial W_{jk}^3}(t)&=\ind_{\{t\geq -1\}}\ind_{\{t\leq 1\}}\quad\text{and}\quad
	\frac{\partial \alpha_j^2}{\partial z^2_j}(t)&=\ind_{\{t\geq 0\}}\ind_{\{t\leq m\}}\nonumber.\nonumber
\end{alignat}
Using PISGD,  
$\frac{\partial H_{jk}}{\partial W_{jk}^3}(t)$ 
is evaluated at $t=W^3_{jk}+z_{W^3_{jk}}$. The probability that 
$\frac{\partial H_{jk}}{\partial W_{jk}^3}(t)$ is evaluated at a point of 
non-differentiability, $|W^3_{jk}+z_{W^3_{jk}}|=1$, is zero:
\begin{alignat}{6}
	&\EE[\ind_{\{W^3_{jk}+z_{W^3_{jk}}=1\}}+\ind_{\{W^3_{jk}+z_{W^3_{jk}}=-1\}}]\nonumber\\
	=&\EE(\EE[\ind_{\{W^3_{jk}+z_{W^3_{jk}}=1\}}+\ind_{\{W^3_{jk}+z_{W^3_{jk}}=-1\}}|W^3_{jk}]).\nonumber
\end{alignat}
Defining $g(y):=\EE[\ind_{\{y+z_{W^3_{jk}}=1\}}+\ind_{\{y+z_{W^3_{jk}}=-1\}}]$, 
$$g(W^3_{jk})=\EE[\ind_{\{W^3_{jk}+z_{W^3_{jk}}=1\}}+\ind_{\{W^3_{jk}+z_{W^3_{jk}}=-1\}}|W^3_{jk}]$$
given the independence of $z_{W^3_{jk}}$ with $W^3_{jk}$. Since $z_{W^3_{jk}}$ is an absolutely continuous random variable, for any $y\in\RR$, $g(y)=0$, hence $\EE[\ind_{\{W^3_{jk}+z_{W^3_{jk}}=1\}}+\ind_{\{W^3_{jk}+z_{W^3_{jk}}=-1\}}|W^3_{jk}]=0$.\\  

The partial derivative $\frac{\partial \alpha_j^2}{\partial z^2_j}(t)$ is evaluated at $t=(W^2_{j}+z_{W^2_j})v^i+b^2_j+z_{b^2_j}$, which we rearrange as $t=z_{W^2_j}v^i+z_{b^2_j}+W^2_{j}v^i+b^2_j$ for convenience. 
The points of non-differentiability are when $z_{W^2_j}v^i+z_{b^2_j}+W^2_{j}v^i+b^2_j\in\{0,m\}$. Computing the probability of this event,
\begin{alignat}{6}
	&\EE[\ind_{\{z_{W^2_j}v^i+z_{b^2_j}=-W^2_{j}v^i-b^2_j\}}+\ind_{\{z_{W^2_j}v^i+z_{b^2_j}=m-W^2_{j}v^i-b^2_j\}}]\nonumber\\
	=&\EE(\EE[\ind_{\{z_{W^2_j}v^i+z_{b^2_j}=-W^2_{j}v^i-b^2_j\}}+\ind_{\{z_{W^2_j}v^i+z_{b^2_j}=m-W^2_{j}v^i-b^2_j\}}|W^2_{j},v^i,b^2_j]).\nonumber
\end{alignat}
Defining $g(Y^1,y^2,y^3):=\EE[\ind_{\{z_{W^2_j}y^2+z_{b^2_j}=-Y^1y^2-y^3\}}+\ind_{\{z_{W^2_j}y^2+z_{b^2_j}=m-Y^1y^2-y^3\}}]$, 
$$g(W^2_{j},v^i,b^2_j)=\EE[\ind_{\{z_{W^2_j}v^i+z_{b^2_j}=-W^2_{j}v^i-b^2_j\}}+\ind_{\{z_{W^2_j}v^i+z_{b^2_j}=m-W^2_{j}v^i-b^2_j\}}|W^2_{j},v^i,b^2_j],$$
since $(z_{W^2_j},z_{b^2_j})$ are independent of ($W^2_{j},v^i,b^2_j)$. For any $(Y^1,y^2,y^3)\in\RR^{2N_1+1}$, $z_{W^2_j}y^2+z_{b^2_j}$ is an absolutely continuous random variable, hence the probability $z_{W^2_j}y^2+z_{b^2_j}\in\{-Y^1y^2-y^3,m-Y^1y^2-y^3\}$ equals zero, and in particular 
\begin{alignat}{6}
	&\EE[\ind_{\{z_{W^2_j}v^i+z_{b^2_j}=-W^2_{j}v^i-b^2_j\}}+\ind_{\{z_{W^2_j}v^i+z_{b^2_j}=m-W^2_{j}v^i-b^2_j\}}|W^2_{j},v^i,b^2_j]=0.\nonumber
\end{alignat}

Given that the formulas \eqref{bp} evaluated at $x+z$ produce the partial derivatives of $\LL_i$ with probability 1, and $\LL_i$ is differentiable at $x+z$ with probability 1, the result follows.
\end{proof}

}

\bibliography{ll_bib}{}
\bibliographystyle{plain}
\end{document}